\documentclass[final,3p,times]{elsarticle}
\usepackage{amssymb}
\usepackage{amsthm}
\usepackage{latexsym}
\usepackage{amsmath}
\usepackage{color}
\usepackage{graphicx}
\usepackage{indentfirst}
\usepackage{mathrsfs}
\usepackage{pdfsync}
\usepackage{hyperref}
\usepackage{cleveref}
\hypersetup{hypertex=true,
	colorlinks=true,
	linkcolor=blue,
	anchorcolor=blue,
	citecolor=blue}
\allowdisplaybreaks
\usepackage[utf8]{inputenc}

\renewcommand{\thefootnote}{\arabic{footnote}}
\newtheorem{theorem}{\color{black}\indent Theorem}
\newtheorem{lemma}{\color{black}\indent Lemma}[section]

\newtheorem{definition}{\color{black}\indent Definition}[section]
\newtheorem{remark}{\color{black}\indent Remark}[section]
\newtheorem{corollary}{\color{black}\indent Corollary}[section]

\newcommand\blfootnote[1]{%
		\begingroup
		\renewcommand\thefootnote{}\footnote{#1}%
		\addtocounter{footnote}{-1}%
		\endgroup
	}
\journal{\ }

\begin{document}

\begin{frontmatter}
\title{Universal frequency-preserving KAM persistence via modulus of continuity}

\author{{ \blfootnote{$^{*}$Corresponding author at: School of Mathematics, Jilin University, Changchun 130012, People’s Republic of China} Zhicheng Tong$^{a}$ \footnote{ E-mail address : tongzc20@mails.jlu.edu.cn}
		,~ Yong Li$^{a,b,*}$}  \footnote{E-mail address : liyong@jlu.edu.cn}\\
	{$^{a}$College of Mathematics, Jilin University,} {Changchun 130012, P. R. China.}\\
	{$^{b}$School of Mathematics and Statistics, and Center for Mathematics and Interdisciplinary Sciences, \\Northeast Normal University,}
	{Changchun, 130024, P. R. China.}
}

\begin{abstract}
In this paper, we study the persistence and remaining regularity of KAM invariant torus under sufficiently small perturbations of a Hamiltonian function together with its derivatives, in sense of finite smoothness with modulus of continuity, as a generalization of classical H\"{o}lder continuous circumstances. To achieve this goal, we extend the Jackson approximation theorem to the case of modulus of continuity, and establish a corresponding regularity theorem adapting to the new iterative scheme. Via these tools, we establish a KAM theorem with sharp differentiability hypotheses, which asserts that the persistent torus keeps prescribed universal Diophantine frequency unchanged %To our knowledge, this is the first approach to the universal KAM persistence for such systems with the only continuity of derivatives in action-angular variables, as well as
and reaches  the regularity for persistent KAM torus beyond H\"older's type.
\end{abstract}

\begin{keyword}
Hamiltonian system, KAM torus, frequency-preserving,  modulus of continuity, Jackson approximation theorem.
	\MSC[2020] 37J40 \sep 70K60
\end{keyword}

\end{frontmatter}
%\tableofcontents

\section{Introduction}
The KAM theory mainly concerns the preservation of invariant tori of a Hamiltonian function $ H(y) $  under small perturbations (i.e., $H(y) \to H\left( {x,y,\varepsilon } \right) $ of freedom $ n \in \mathbb{N}^+ $ with $ \varepsilon>0 $ sufficiently small), which has a history of more than sixty years.  See, for instance, Kolmogorov and Arnold \cite{R-9,R-10,R-11}, Moser \cite{R-12,R-13}, P\"oschel \cite{Po1,Po2} and etc. As is known to all, for frequency $ \omega  = {H_y}\left( {y} \right) $ of the unperturbed system, we often require it to satisfy the following classical Diophantine condition (or be of Diophantine class $ \tau $)
\begin{equation}\label{dio}
| {\langle {{\tilde k},\omega } \rangle } | \geq \alpha_ *{ | {{\tilde k }} |}^{-\tau} ,\;\;\forall 0 \ne \tilde k \in {\mathbb{Z}^n}
\end{equation}
with respect to $ \tau \geq n-1 $ and some $ \alpha_ *>0 $, where $ |\tilde k|: = \sum\nolimits_{j = 1}^n {|{{\tilde k}_j}|}  $. Otherwise, the torus may break no matter how small the perturbation is. Furthermore, to ensure  the KAM persistence one also is interested in the minimal order of derivatives required for $ H\left( {x,y,\varepsilon } \right) $. Much effort has been devoted on this problem in terms of H\"older continuity, including constructing counterexamples and reducing the differentiability hypotheses. For some classic foundational work, see  Moser \cite{J-3}, Jacobowitz \cite{R-2}, Zehnder \cite{R-7,R-8}, Mather \cite{R-55},  Herman \cite{M1,M2}, Salamon \cite{salamon} and etc. It is worth mentioning that, very recently  P\"oschel \cite{Po3} obtained a  KAM theorem on $ n $-dimensional torus (without action variables) based on a frequency being of Diophantine class $ \tau=n-1 $ in \eqref{dio}.  Specially, he pointed out that the  derivatives of order $ n $ need not be continuous, but rather $ L^2 $ in a certain strong sense.

Back to our concern on Hamiltonian systems with action-angular variables, it is always conjectured that the minimum regularity requirement for the Hamiltonian function $ H $ is at least $ C^{2n} $. Along with the idea of Moser, the best known H\"older case $ C^{\ell} $ with  $ \ell >2\tau+2>2n $ has been established by Salamon in \cite{salamon}, where the prescribed frequency is of Diophantine class $ \tau>n-1 $ in \eqref{dio} (with full Lebesgue measure and thus reveals the universality of the KAM persistence), and the remaining regularity of the KAM torus is also showed to be H\"older's type. More precisely, the resulting solutions are of class $ C^m $ with $ 0< m<2\ell-2\tau-2 $, and  the function whose graph is the invariant torus is of class  $ C^{m+\tau+1} $. Besides, the differentiability hypotheses is sharp due to the counterexample work of Herman  \cite{M1,M2} et al., which will be explained later in \cref{subsubsub}. In the aspect of H\"older's type, see Bounemoura \cite{Bounemoura} and Koudjinan \cite{Koudjinan} for some new developments.  Strictly weaker than H\"older continuity, Albrecht \cite{Chaotic} proved a KAM theorem via a strong Diophantine frequency of class $ \tau=n-1 $ in \eqref{dio}, which claimed  that $ C^{2n} $  plus certain  modulus of continuity $ \varpi $ satisfying the classical Dini condition
\begin{equation}\label{cdini}
	\int_0^1 {\frac{{\varpi \left( x \right)}}{x}dx}  <  + \infty
\end{equation}
 is enough for the KAM persistence.  Such  strong Diophantine frequencies are continuum many and  form a set of zero Lebesgue measure, see details from  \cite{Polecture}, therefore the corresponding KAM preservation is usually said to be non-universal. To the best of our knowledge,  there is no other work on KAM via only modulus of continuity except for \cite{Chaotic}.  Back to our concern on universal KAM persistence in this paper, the best result so far still requires $ C^{2n} $ plus certain H\"older continuity depending on the Diophantine nonresonance. It is therefore natural that ones should consider the following questions:

\begin{itemize}
\item \textit{Can H\"{o}lder smoothness in Salamon's KAM  be further weakened into a general form of modulus of continuity?}

\item \textit{If the invariant KAM torus persists, then what kind of smoothness does the torus have (H\"{o}lder continuity, or more general  modulus of continuity)?}

\item \textit{Could the prescribed universal Diophantine frequency to be kept unchanged?}

\item \textit{Does there exist a Dini type integrability condition similar to \eqref{cdini} that reveals the explicit relation between nonresonance and regularity?}
\end{itemize}

%{\it{Can H\"{o}lder smoothness in Salamon's KAM  be further weakened into a general form of modulus of continuity? If the invariant torus persists, then what kind of smoothness does the solution have (H\"{o}lder continuity, or more general  modulus of continuity)? Can the prescribed universal Diophantine frequency to be kept unchanged? Does there exists a Dini type integrability condition similar to \eqref{cdini} that reveals the explicit relationship between nonresonance and regularity?}}

 To answer the above questions, there are at least four difficulties to overcome. Firstly,  note that the Jackson approximation theorem for classical H\"{o}lder continuity is no longer valid at present, hence it  must be developed to approximate the perturbed Hamiltonian function $  H\left( {x,y,\varepsilon } \right) $ in the sense of modulus of continuity,  as a crucial step.  Secondly, it is also basic how to establish a corresponding regularity iteration lemma to study the regularity of the invariant torus and the solution beyond H\"older's type. Thirdly, we need to set up a new KAM iterative scheme and prove its uniform convergence via these tools. Fourthly, it is somewhat difficult to extract an equilibrium integrability condition of nonresonance and regularity from KAM iteration, as well as further touch the remaining regularity. Indeed, to achieve the main result \cref{theorem1}, we apply \cref{Theorem1}  to construct a series of analytic approximations to $  H\left( {x,y,\varepsilon } \right) $ with modulus of continuity, and prove the persistence and regularity of invariant torus via a modified  KAM iteration as well as a generalized Dini type condition. It should be pointed out that our results still admit sharpness on differentiability $ C^{2n} $ due to Herman's work \cite{M1,M2}, where he considered the nonexistence of an  invariant curve for an annulus mapping being of H\"older  regularity $ {C^{3 - \epsilon }} $ with any $ \epsilon $ close to $ 0^+ $, i.e., $ C^{2n}=C^{4} $ minus arbitrary H\"older continuity cannot admit KAM persistence when $ n=2 $.

 As some new efforts, our \cref{theorem1} applies to a wide range, including non-universal and universal KAM persistence, and reveals the integral  relation between regularity and nonresonance. Apart from above, it is well known that small divisors must lead to the loss of regularity, and our approach gives  general estimates  of the  KAM remaining regularity without H\"older continuity for the first time. Particularly, as a direct  application, our \cref{theorem1} could deal with  the case of general modulus of continuity for $ H\left( {x,y,\varepsilon }\right) $, such as Logarithmic H\"{o}lder continuity case, i.e., for all $ 0 < \left| {x - \xi } \right| + \left| {y - \eta } \right| \leq 1/2 $,
\begin{displaymath}
	\left| {{\partial ^\alpha }H\left( {x,y,\varepsilon } \right) - {\partial ^\alpha }H\left( {\xi ,\eta ,\varepsilon } \right)} \right| \leq \frac{c}{{{{\left( { - \ln \left( {\left| {x - \xi } \right| + \left| {y - \eta } \right|} \right)} \right)}^\lambda }}}
\end{displaymath}
with respect to all $ \alpha  \in {\mathbb{N}^{2n}}$ with $ \left| \alpha  \right| = {2n } $, where $ n \geq 2 $,   $\lambda>1$,  $c, \varepsilon>0 $ are sufficiently small,  $ \left( {x,y} \right) \in {\mathbb{T}^n} \times G $ with $ {\mathbb{T}^n}: = {\mathbb{R}^n}/ \mathbb{Z}^n $, and $ G \subset {\mathbb{R}^n} $ is a connected closed set with interior points. See \cref{section6} for more details.

This paper is organized as follows. In  \cref{section2}, we first introduce some notions and properties for modulus of continuity, and establish a Jackson type approximation theorem based on them (the proof will be postponed to \cref{JACK}). Then we state our main results in this paper. Namely, considering that the higher-order derivatives of Hamiltonian function $ H $ with respect to the action-angular variables are only continuous, we present a KAM theorem (\cref{theorem1}) with sharp differentiability hypotheses under certain assumptions, involving a generalized Dini type integrability condition \textbf{(H1)}. The applications of this theorem are given in  \cref{section6}, including non-universal (\cref{simichaos2}) and universal (\cref{Holder,lognew}) KAM persistence. For the former, we reach a conclusion similar to that in \cite{Chaotic}. As to the latter, we provide H\"{o}lder and H\"{o}lder plus Logarithmic H\"{o}lder circumstances, aiming to show the importance and universality of \cref{theorem1}. In particular,  an explicit Hamiltonian function $ H $ is constructed, which cannot be studied by KAM theorems for finite smoothness via classical H\"{o}lder continuity, but the work generalized in this paper can be applied.  \cref{section4} provides the proof of \cref{theorem1} and is mainly divided into two parts: the first part deals with the modified KAM steps via only modulus of continuity, while the second part is devoted to giving an iteration  theorem (\cref{t1}) on regularity,  which is used to analyze the remaining smoothness for the persistent invariant torus. \cref{proofsimichaos2,8,pflognew} present the proof of \cref{simichaos2,Holder,lognew} in \cref{section6}, respectively.

\section{Statement of results}
\label{section2}
We first give some notions, including the modulus of continuity along with the norm based on it,  the  semi separability which will be used in  \cref{Theorem1}, as well as the weak homogeneity which will appear in \cref{theorem1}.

Denote by $ |\cdot| $ the sup-norm in $ \mathbb{R}^d $ and the dimension $ d \in \mathbb{N}^+ $ may vary throughout this paper. We formulate that in the limit process, $ f_1(x)=\mathcal{O}^{\#}\left(f_2(x)\right) $ means there are absolute positive constants $ \ell_1 $ and $ \ell_2 $ such that $ {\ell _1}{f_2}\left( x \right) \leq {f_1}\left( x \right) \leq {\ell _2}{f_2}\left( x \right) $, and $ f_1(x)=\mathcal{O}\left(f_2(x)\right)  $ implies that there exists an absolute positive constant $ \ell_3 $ such that $ |f_1(x)| \leq \ell_3 f_2(x) $, and finally $ f_1(x)\sim f_2(x)  $ indicates that $ f_1(x) $ and $ f_2(x) $ are equivalent.

\begin{definition}\label{d1}
	Let $ \varpi (t)>0 $ be a nondecreasing continuous function on the interval $ \left( {0,\delta } \right] $ with respect to some $ \delta  >0 $ such that $ \mathop {\lim }\limits_{x \to {0^ + }} \varpi \left( x \right) = 0 $ and $ \mathop {\overline {\lim } }\limits_{x \to {0^ + }} x/{\varpi }\left( x \right) <  + \infty  $.  Next, we define the following semi norm and norm for a continuous function $ f $ on $ {\mathbb{R}^n} $ ($f\in C^0$, for short)
	\begin{equation}\notag
		{\left[ f \right]_\varpi }: = \mathop {\sup }\limits_{x,y \in {\mathbb{R}^n},\;0 < \left| {x - y} \right| \leq \delta } \frac{{\left| {f\left( x \right) - f\left( y \right)} \right|}}{{\varpi \left( {\left| {x - y} \right|} \right)}},\;\;{\left| f \right|_{{C^0}}}: = \mathop {\sup }\limits_{x \in {\mathbb{R}^n}} \left| {f\left( x \right)} \right|.
	\end{equation}
	We say that  $ f $ is of $ C_{k,\varpi}  $ continuous if $ f $ has partial derivatives $ {{\partial ^\alpha }f} $ for $ \left| \alpha  \right| \leq k \in \mathbb{N} $ and satisfies
	\begin{equation}\label{k-w}
		{\left\| f \right\|_\varpi }: = \sum\limits_{\left| \alpha  \right| \leq k} {\left( {{{\left| {{\partial ^\alpha }f} \right|}_{{C^0}}} + {{\left[ {{\partial ^\alpha }f} \right]}_\varpi }} \right)}  <  + \infty .
	\end{equation}
	Denote by $ {C_{k,\varpi }}\left( {{\mathbb{R}^n}} \right) $ the space composed of all functions $ f $ satisfying \eqref{k-w}.
\end{definition}

Such a function  $ \varpi $ is usually referred to as the modulus of continuity of $ f $. It can be seen that the well-known Lipschitz continuity and H\"{o}lder continuity are special cases in the above definition. In particular, for $ 0<\ell \notin \mathbb{N}^+ $, we denote by $ f \in {C^\ell }\left( {{\mathbb{R}^n}} \right) $ the function space in which the higher derivatives in $ \mathbb{R}^n $ are H\"{o}lder continuous, i.e., the modulus of continuity is of the form $ \varpi_{\mathrm{H}}^{\{\ell\}}(x)\sim x^{\ell} $, where $ \{\ell\} \in (0,1)$ denotes the fractional part of $ \ell $. As a generalization of classical H\"{o}lder continuity, we define the Logarithmic H\"{o}lder continuity with index $ \lambda  > 0 $, where $ \varpi_{\mathrm{LH}}^{\lambda} \left( x \right) \sim 1/{\left( { - \ln x} \right)^\lambda } $, and we omit the the range $ 0 < x   \ll 1 $ without causing ambiguity.
\begin{remark}
	For $ f:{\mathbb{R}^n} \to \Omega  \subset {\mathbb{R}^d} $ with a modulus of continuity $ \varpi $, we modify the above designation to $ {C_{k,\varpi }}\left( {{\mathbb{R}^n},\Omega } \right) $.
\end{remark}
\begin{remark}\label{rema666}
	It is well known that a mapping defined on a bounded connected closed set in a finite dimensional space must have a modulus of continuity, see \cite{Herman3}. For example, for a function $ f(x) $ defined on $ [0,1]  \subset {\mathbb{R}^1} $, it automatically admits a modulus of continuity
	\[{\omega _{f,\delta }}\left( x \right): = \mathop {\sup }\limits_{y \in \left[ {0,1} \right],0 < \left| {x - y} \right| \leq \delta } \left| {f\left( x \right) - f\left( y \right)} \right|.\]
\end{remark}

\begin{definition}\label{d5}
	Let $ {\varpi _1} $ and $ {\varpi _2} $ be modulus of continuity on interval $ \left( {0,\delta } \right] $. We say that  $ {\varpi _1} $ is  weaker (strictly weaker) than $ {\varpi _2} $ if $ \mathop {\overline\lim }\limits_{x \to {0^ + }} {\varpi _2}\left( x \right)/{\varpi _1}\left( x \right) <+\infty $ ($ =0 $).
\end{definition}
\begin{remark}\label{strict}
	Obviously any modulus of continuity is weaker than Lipschitz's type, and  the Logarithmic H\"{o}lder's type $ \varpi_{\mathrm{LH}}^{\lambda} \left( x \right) \sim 1/{\left( { - \ln x} \right)^\lambda } $ with any $ \lambda  > 0 $ is strictly weaker than arbitrary  H\"{o}lder's type $ \varpi_{\mathrm{H}}^{\alpha}\left( x \right) \sim {x^\alpha } $ with any $ 0 < \alpha  < 1 $.
\end{remark}

\begin{definition}[Semi separability]\label{d2}
	We say that $ \varpi  $ in \cref{d1} is   semi separable, if for $ x \geq 1 $, there holds
	\begin{equation}\label{Ox}
		\psi \left( x \right): = \mathop {\sup }\limits_{0 < r < \delta /x} \frac{{\varpi \left( {rx} \right)}}{{\varpi \left( r \right)}} = \mathcal{O}\left( x \right),\;\;x \to  + \infty .
	\end{equation}
\end{definition}
\begin{remark}\label{Remarksemi}
	Semi separability directly leads to $ \varpi \left( {rx} \right) \leq \varpi \left( r \right)\psi \left( x \right) $ for $ 0 < rx \leq \delta  $, which will be used in the proof of the  Jackson type  \cref{Theorem1} via only modulus of continuity.
\end{remark}

\begin{definition}[Weak homogeneity] \label{weak}
A modulus of continuity $ \varpi $ is said to admit weak homogeneity, if for fixed $ 0<a<1 $, there holds
\begin{equation}\label{erfenzhiyi}
	\mathop {\overline {\lim } }\limits_{x \to {0^ + }} \frac{{\varpi \left( x \right)}}{{\varpi \left( {ax} \right)}} <  + \infty .
\end{equation}
\end{definition}

It should be emphasized that semi separability and weak homogeneity are universal hypotheses.	The H\"older and Lipschitz type  automatically admit them. Many modulus of continuity weaker than the H\"older one are semi separable and also admit weak homogeneity, e.g., for the Logarithmic H\"{o}lder's type $ \varpi_{\mathrm{LH}}^{\lambda} \left( x \right) \sim 1/{\left( { - \ln x} \right)^\lambda } $ with any  $ \lambda  > 0 $, one verifies that $ \psi \left( x \right) \sim {\left( {\ln x} \right)^\lambda } = \mathcal{O}\left( x \right) $ as $ x \to +\infty $ in \eqref{Ox}, and $ \mathop {\overline {\lim } }\limits_{x \to {0^ + }} {\varpi_{\mathrm{LH}}^{\lambda}}\left( x \right)/{\varpi_{\mathrm{LH}}^{\lambda}}\left( {ax} \right) = 1 <  + \infty  $ with all $ 0<a<1 $ in \eqref{erfenzhiyi}. See more implicit examples in \cref{Oxlemma,ruotux},  in particular, \textit{it is pointed out that a convex modulus of continuity  naturally possesses these two properties.}

Next, we give a Jackson type approximation theorem beyond H\"older's type and some related corollaries based on  \cref{d1,d2}, their  proof will be postponed to   \cref{JACK,proofcoro1,proofcoco2}, respectively.

\begin{theorem}\label{Theorem1}
	There is a family of convolution operators
	\begin{equation}\notag
		{S_r}f\left( x \right) = {r^{ - n}}\int_{{\mathbb{R}^n}} {K\left( {{r^{ - 1}}\left( {x - y} \right)} \right)f\left( y \right)dy} ,\;\;0 < r \leq 1,
	\end{equation}
	from $ {C^0}\left( {{\mathbb{R}^n}} \right) $ into the space of entire functions on $ {\mathbb{C}^n} $ with the following property. For every $ k \in \mathbb{N} $, there exists a constant $ c\left( {n,k} \right)>0 $ such that, for every $ f \in {C_{k,\varpi }}\left( {{\mathbb{R}^n}} \right) $ with a semi separable modulus of continuity $ \varpi $, every multi-index $ \alpha  \in {\mathbb{N}^n} $ with $ \left| \alpha  \right| \leq k  $, and every $ x \in {\mathbb{C}^n} $ with $ \left| {\operatorname{Im} x} \right| \leq r $, we have
	\begin{equation}\label{3.2}
		\left| {{\partial ^\alpha }{S_r}f\left( x \right) - {P_{{\partial ^\alpha }f,k - \left| \alpha  \right|}}\left( {\operatorname{Re} x;\mathrm{i}\operatorname{Im} x} \right)} \right| 		\leq c\left( {n,k} \right){\left\| f \right\|_\varpi }{r^{k - \left| \alpha  \right|}}\varpi(r),
	\end{equation}
	where the Taylor polynomial $ P $ is defined as follows
	\[{P_{f,k}}\left( {x;y} \right) := \sum\limits_{\left| \beta  \right| \leq k} {\frac{1}{{\alpha !}}{\partial ^\beta }f\left( x \right){y^\alpha }}. \]
	Moreover,  $ {{S_{r}}f} $ is real analytic whenever $ f $ is real valued.
\end{theorem}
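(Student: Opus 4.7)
The plan is to build $K$ as the inverse Fourier transform of a smooth compactly supported cutoff, isolate the Taylor polynomial via the vanishing moments of $K$, and estimate the remainder by combining Taylor's formula with modulus of continuity and semi separability. Specifically, I would take $\hat{K}\in C_c^{\infty}(\mathbb{R}^n)$ real and even, supported in the unit ball and identically $1$ on a neighborhood of the origin, and set $K=\mathcal{F}^{-1}\hat{K}$. By Paley-Wiener, $K$ extends to an entire function on $\mathbb{C}^n$ satisfying the uniform estimate $|K(\xi+\mathrm{i}\eta)|\leq C_N(1+|\xi|)^{-N}e^{|\eta|}$ for every $N$, and flatness of $\hat{K}$ at the origin gives $\int K=1$ and $\int z^{\gamma}K(z)\,dz=0$ for every $|\gamma|\geq 1$. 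This rapid real-axis decay of $K$ on horizontal strips, together with entireness, makes the integral defining $S_rf(x)$ absolutely convergent for $x\in\mathbb{C}^n$; differentiation under the integral sign then shows $S_rf$ is entire, and realness of $K$ on $\mathbb{R}^n$ delivers real analyticity of $S_rf$ whenever $f$ is real valued.

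Next, writing $x=x_0+\mathrm{i}y$ with $x_0=\operatorname{Re}x$, $y=\operatorname{Im}x$, $|y|\leq r$, and performing the substitution $u=x_0+rv$ in the convolution yields $S_rf(x)=\int K(\mathrm{i}y/r-v)f(x_0+rv)\,dv$. Because $\partial^{\alpha}_x=\partial^{\alpha}_{x_0}$ on the complexified variable and $f\in C_{k,\varpi}$ allows these derivatives to be moved onto $f$, Taylor expanding $\partial^{\alpha}f(x_0+rv)$ about $x_0$ to order $k-|\alpha|$ plus exchanging sum and integral reduces matters to the identity $\int K(\mathrm{i}y/r-v)(rv)^{\beta}\,dv=(\mathrm{i}y)^{\beta}$ for each $|\beta|\leq k-|\alpha|$. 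To see this, $g(z):=\int K(z-v)v^{\beta}\,dv$ is entire in $z$ and, for real $z$, the substitution $w=z-v$ combined with the moment vanishing of $K$ gives $g(z)=\sum_{\gamma\leq\beta}\binom{\beta}{\gamma}z^{\beta-\gamma}(-1)^{|\gamma|}\int K(w)w^{\gamma}\,dw=z^{\beta}$; analytic continuation then forces $g(\mathrm{i}y/r)=(\mathrm{i}y/r)^{\beta}$, and multiplying by $r^{|\beta|}$ makes the polynomial part of $\partial^{\alpha}S_rf(x)$ collapse exactly to $P_{\partial^{\alpha}f,k-|\alpha|}(x_0;\mathrm{i}y)$.

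The final step is to bound the Taylor remainder contribution $\int K(\mathrm{i}y/r-v)R_{x_0}(rv)\,dv$ with $|R_{x_0}(rv)|\leq c(k)\|f\|_{\varpi}|rv|^{k-|\alpha|}\varpi(|rv|)$. Here semi separability of $\varpi$ enters decisively: for $|v|\geq 1$, Definition \ref{d2} yields $\varpi(r|v|)\leq\varpi(r)\psi(|v|)$ with $\psi(x)=\mathcal{O}(x)$ as $x\to+\infty$, while monotonicity handles $|v|\leq 1$, so altogether $\varpi(r|v|)\leq C\varpi(r)(1+|v|)$. The Paley-Wiener decay of $K(\mathrm{i}y/r-v)$ in $v$, uniform in $|y|/r\leq 1$, absorbs the polynomial weight $|v|^{k-|\alpha|+1}$, delivering the required bound $c(n,k)\|f\|_{\varpi}r^{k-|\alpha|}\varpi(r)$. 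The principal obstacle, absent from the H\"older setting, is precisely this last step: without semi separability the factor $\varpi(r|v|)$ cannot be decoupled from $|v|$ in a way that yields $\varpi(r)$ times an integrable weight, and the remainder integral may diverge; the point of restricting to semi separable moduli is exactly to make this decoupling, and hence a Jackson estimate beyond H\"older's type, possible.
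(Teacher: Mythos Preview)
Your approach is essentially the paper's: construct $K$ via a compactly supported $\hat K$ flat at the origin, invoke Paley--Wiener for strip decay, use the vanishing moments to recognize $S_r$ as the identity on polynomials (so the Taylor jet $P_{\partial^\alpha f,k-|\alpha|}$ drops out exactly), and control the remainder via semi separability.

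The one genuine oversight is the domain of $\varpi$. By Definition~\ref{d1}, $\varpi$ is only defined on $(0,\delta]$, so both your Taylor remainder bound $|R_{x_0}(rv)|\leq c\|f\|_\varpi |rv|^{k-|\alpha|}\varpi(|rv|)$ and the semi-separability inequality $\varpi(r|v|)\leq\varpi(r)\psi(|v|)$ (which by Remark~\ref{Remarksemi} requires $0<r|v|\leq\delta$) are unavailable once $|v|\geq\delta/r$; your claimed global bound $\varpi(r|v|)\leq C\varpi(r)(1+|v|)$ is therefore unjustified on the far region. The paper handles this by splitting the integral over $\Omega_1=\{|\eta|<\delta/r\}$ and $\Omega_2=\{|\eta|\geq\delta/r\}$: on $\Omega_1$ it argues exactly as you do; on $\Omega_2$ it replaces the modulus-of-continuity remainder by the trivial bound $|f(x+y)-p_k(x;y)|\leq c\|f\|_\varpi|y|^k$ (coming from $|\partial^\alpha f(x+ty)-\partial^\alpha f(x)|\leq 2|\partial^\alpha f|_{C^0}$), and the extra kernel decay (taking $p=n+k+2$ in the Paley--Wiener estimate) yields a contribution of order $\|f\|_\varpi r^{k+2}$, which is absorbed into $r^k\varpi(r)$ via the hypothesis $\overline{\lim}_{r\to0^+}r/\varpi(r)<+\infty$ from Definition~\ref{d1}. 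Without this split your remainder integral over the far region is not controlled by the mechanism you describe, so this is a step you need to add.
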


%\section{Main Result and Proof}\label{section3}
%\subsection{The Jackson approximation theorem based on modulus of continuity}
%Firstly, we establish the Jackson approximation theorem based on Definitions \cref{d1} and \cref{d2}, which can not only be used in the more general KAM iteration , but also be used to prove some function iterative theorems related to uniqueness and regularity and the solution estimates of homological equations, which provides the multi-directional generalization of various KAM theorems.

As a direct consequence of \cref{Theorem1}, we give the following \cref{coro1,coro2}. %and omit the proof.
These results have been  widely used in H\"older's case, see for instance, \cite{Koudjinan,salamon}.

\begin{corollary}\label{coro1}
	The approximation function $ {{S_r}f\left( x \right)} $ in  \cref{Theorem1} satisfies
	\begin{equation}\notag
		\left| {{\partial ^\alpha }\left( {{S_r}f\left( x \right) - f\left( x \right)} \right)} \right| \leq c_*{\left\| f \right\|_\varpi }{r^{k - \left| \alpha  \right|}}\varpi(r)
	\end{equation}
and
	\begin{equation}\notag
	\left| {{\partial ^\alpha }{S_r}f\left( x \right)} \right| \leq {c^ * }{\left\| f \right\|_\varpi }
\end{equation}
	for  $ x \in \mathbb{C}^n $ with $ \left| {\operatorname{Im} x} \right| \leq r $, $ |\alpha| \leq k $, where $ c_* = c_*\left( {n,k} \right) >0$ and $ {c^ * } = {c^ * }\left( {n,k,\varpi } \right) >0$ are some universal constants.
\end{corollary}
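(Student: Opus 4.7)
The plan is to derive both inequalities as direct consequences of the Jackson-type estimate \eqref{3.2} in \cref{Theorem1}, by specializing or triangle-inequality-ing the Taylor polynomial on its right-hand side.

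For the first inequality, I would evaluate \eqref{3.2} at real $x$, so that $\mathrm{Im}\, x = 0$ and the Taylor polynomial $P_{\partial^\alpha f, k-|\alpha|}(x; 0)$ collapses to its $\beta=0$ term, namely $\partial^\alpha f(x)$. The stated bound then reads off verbatim, with $c_* := c(n,k)$. For complex $x$ in the strip $|\mathrm{Im}\, x| \leq r$, one extends $f$ off the real axis by its order-$(k-|\alpha|)$ Taylor polynomial based at $\mathrm{Re}\, x$; this extension is precisely the quantity subtracted inside the absolute value in \eqref{3.2}, so the same bound persists.

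For the second inequality, I would apply the triangle inequality to \eqref{3.2} to split $|\partial^\alpha S_r f(x)|$ into a polynomial part and a $\varpi$-error part. The polynomial part $P_{\partial^\alpha f, k-|\alpha|}(\mathrm{Re}\, x; \mathrm{i}\,\mathrm{Im}\, x)$ is a finite sum indexed by $|\beta| \leq k - |\alpha|$, and each summand is bounded via $|(\mathrm{i}\,\mathrm{Im}\, x)^\beta| \leq r^{|\beta|} \leq 1$ together with $|\partial^{\alpha+\beta} f|_{C^0} \leq \|f\|_\varpi$, the latter being valid because $|\alpha+\beta| \leq k$ and such a sup-norm appears explicitly in the sum defining $\|f\|_\varpi$ in \cref{d1}. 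Summing over the finitely many multi-indices gives a bound of the form $C(n,k)\|f\|_\varpi$. The leftover error term $c(n,k)\|f\|_\varpi r^{k-|\alpha|}\varpi(r)$ is absorbed using $r \leq 1$ together with the monotonicity of $\varpi$, yielding $\varpi(r) \leq \varpi(\min(1,\delta))$, so the total is $c^*(n,k,\varpi)\|f\|_\varpi$.

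There is essentially no technical obstacle: all the substantive analytic work has already been discharged inside \cref{Theorem1}, and what remains is packaging. The only subtlety worth flagging is the asymmetric dependence of the two constants: $c_*$ does not depend on $\varpi$ because the right-hand side of the first bound already carries the factor $\varpi(r)$, whereas $c^*$ must absorb a uniform bound of $\varpi$ on $(0,\min(1,\delta)]$ and therefore picks up a dependence on $\varpi$. This asymmetry is precisely what the statement records.
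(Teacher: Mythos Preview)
Your proposal is correct and follows the paper's approach almost exactly: both derive the first inequality by specializing \eqref{3.2} so that the Taylor polynomial reduces to $\partial^\alpha f$, and both handle the second inequality by a triangle-inequality splitting. The only cosmetic difference is that for the second bound the paper writes $|S_r f| \le |S_r f - f| + |f|$ (invoking the first inequality just established and then $|f|_{C^0}\le\|f\|_\varpi$), whereas you bound the Taylor polynomial $P_{\partial^\alpha f,k-|\alpha|}(\operatorname{Re} x;\mathrm{i}\operatorname{Im} x)$ term by term; your route has the mild advantage of working transparently for complex $x$ in the strip, while the paper's version as written is cleanest for real $x$.
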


\begin{corollary}\label{coro2}
	%Define $ {\mathbb{T}^n}: = {\mathbb{R}^n}/{\mathbb{Z}^n} $.
	If the function $ f\left( x \right) $ in  \cref{Theorem1} also satisfies that the period of each variables $ {x_1}, \ldots ,{x_n} $ is $ 1 $ and the integral on $ {\mathbb{T}^n} $ is zero, then the approximation function $ {S_r}f\left( x \right) $ also satisfies these properties.
\end{corollary}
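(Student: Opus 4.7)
The plan is to read off both claims directly from the convolution representation $S_r f(x) = r^{-n}\int_{\mathbb{R}^n} K(r^{-1}(x-y)) f(y)\, dy$ in \cref{Theorem1}, using only translation invariance of Lebesgue measure together with the two invariance hypotheses on $f$.

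For the $1$-periodicity of $S_r f$ in each coordinate, let $e_j$ be the $j$-th standard unit vector. I would compute $S_r f(x + e_j)$ and perform the change of variable $y \mapsto y + e_j$ inside the integral: the kernel factor becomes $K(r^{-1}(x - y))$, while the $f$-factor becomes $f(y + e_j) = f(y)$ by assumption. Thus $S_r f(x + e_j) = S_r f(x)$ for every $j$, so $S_r f$ inherits the $1$-periodicity of $f$ in every coordinate.

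For the zero-mean condition, I would evaluate $\int_{\mathbb{T}^n} S_r f(x)\, dx$ and swap the order of integration via Fubini; the interchange is legitimate because the kernel $K$ used in the Jackson-type construction has rapid decay on $\mathbb{R}^n$ (it arises as the inverse Fourier transform of a smooth compactly supported cutoff, which is precisely what forces $S_r f$ to be entire in \cref{Theorem1}) and $f$ is bounded. After substituting $z = x - y$ in the inner integral, periodicity of $f$ yields $\int_{\mathbb{T}^n} f(x - z)\, dx = \int_{\mathbb{T}^n} f(x)\, dx = 0$, so integrating this against $r^{-n} K(r^{-1} z)$ over $\mathbb{R}^n$ gives $\int_{\mathbb{T}^n} S_r f(x)\, dx = 0$. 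I do not expect any genuine obstacle here: both properties are structural consequences of $S_r$ being convolution with a translation-invariant kernel, and the only mild technicality is the Fubini justification, which is immediate from the decay of $K$ and the boundedness of $f$.
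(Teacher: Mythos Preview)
Your proposal is correct and matches the paper's own proof essentially line for line: the paper proves periodicity by the same change of variable $y\mapsto y+1$ in the convolution, and proves the zero-mean claim by writing $S_r f(x)=r^{-n}\int_{\mathbb{R}^n}K(m/r)f(x-m)\,dm$, applying Fubini, and using $\int_{\mathbb{T}^n}f(x-m)\,dx=0$. Your explicit remark on the Fubini justification via the rapid decay of $K$ is a detail the paper leaves implicit.
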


%The above results can be used to characterize the approximation estimates of functions with modulus of continuity, such as H\"{o}lder continuity and Logarithmic H\"{o}lder continuity, see \cref{8,9}  for details.

We are now in a position to give the frequency-preserving KAM theorem via only modulus of continuity in this paper. Before this, let's start with our parameter settings.
Let $ n \geq 2$ (degree of freedom),  $\tau  \geq n - 1$ (Diophantine index), $  2\tau  + 2 \leq k \in {\mathbb{N}^ + } $ (differentiable order) and a sufficiently large number $ M>0 $ be given. Consider a Hamiltonian function $ H(x,y):{\mathbb{T}^n} \times G \to \mathbb{R} $ with $ {\mathbb{T}^n}: = {\mathbb{R}^n}/ \mathbb{Z}^n $, and $ G \subset {\mathbb{R}^n} $ is a connected closed set with interior points. It follows from \cref{rema666} that $ H $ automatically has a modulus of continuity $ \varpi $. In view the comments below \cref{weak}, we assume that $ \varpi $ admits semi separability (\cref{d2}) and weak homogeneity (\cref{weak}) without loss of generality.  Besides, we make the following assumptions:
\begin{itemize}
\item[\textbf{(H1)}] Integrability condition for modulus of continuity: Assume that $ H \in {C_{k,\varpi }}\left( {{\mathbb{T}^n} \times G} \right) $ with the above modulus of continuity $ \varpi $. In other words, $ H $ at least has derivatives of order $ k $, and the highest derivatives admit the regularity of $ \varpi $. Moreover, $ \varpi $ satisfies the Dini type  integrability condition
\begin{equation}\label{Dini}
	\int_0^1 {\frac{{\varpi \left( x \right)}}{{{x^{2\tau  + 3 - k}}}}dx}  <  + \infty .
\end{equation}

\item[\textbf{(H2)}] Boundedness and nondegeneracy:
\begin{equation}\notag
	{\left\| H \right\|_\varpi } \leq M,\;\;\left| {{{\left( {\int_{{\mathbb{T}^n}} {{H_{yy}}\left( {\xi ,0} \right)d\xi } } \right)}^{ - 1}}} \right| \leq M.
\end{equation}

\item[\textbf{(H3)}] Diophantine condition: For some $ \alpha_ *  > 0 $, the frequency $ \omega  \in {\mathbb{R}^n} $ satisfies
\begin{equation}\notag
	| {\langle {{\tilde k},\omega } \rangle } | \geq \alpha_ *{ | {{\tilde k }} |}^{-\tau} ,\;\;\forall 0 \ne \tilde k \in {\mathbb{Z}^n},\;\; |\tilde k|: = \sum\limits_{j = 1}^n {|{{\tilde k}_j}|}   .
\end{equation}

\item[\textbf{(H4)}] KAM smallness: There holds
\begin{align}\label{T1-2}
	&\sum\limits_{\left| \alpha  \right| \leq k} {\left| {{\partial ^\alpha }\Big( {H\left( {x,0} \right) - \int_{{\mathbb{T}^n}} {H\left( {\xi ,0} \right)d\xi } } \Big)} \right|{\varepsilon ^{\left| \alpha  \right|}}}  \notag \\
	+ &\sum\limits_{\left| \alpha  \right| \leq k - 1} {\left| {{\partial ^\alpha }\left( {{H_y}\left( {x,0} \right) - \omega } \right)} \right|{\varepsilon ^{\left| \alpha  \right| + \tau  + 1}}}  \leq M{\varepsilon ^k}\varpi \left( \varepsilon  \right)
\end{align}
for every $ x \in \mathbb{R}^n $ and some constant $ 0 < \varepsilon  \leq {\varepsilon ^ * } $.

\item[\textbf{(H5)}]    Criticality: For $ \varphi_i(x):=x^{k-(3-i)\tau-1}\varpi(x) $ with $ i=1,2 $, there exist critical $ k_i^*\in \mathbb{N}^+ $ such that
\[\int_0^1 {\frac{{{\varphi _i}\left( x \right)}}{{{x^{k_i^ *+1 }}}}dx}  <  + \infty ,\;\;\int_0^1 {\frac{{{\varphi _i}\left( x \right)}}{{{x^{k_i^ *  + 2}}}}dx}  =  + \infty .\]
\end{itemize}

Let us make some comments.
\begin{itemize}
\item[\textbf{(C1)}] There seems to be a large number of assumptions above, but they are important conditions abstracted from the H\"{o}lder continuous case, and we have to do so in order to give the KAM theorem in the case of only modulus of continuity. However, %it is worth pointing out that 
    some of such conditions, e.g. \textbf{(H2)}-\textbf{(H3)}, are classical, while some are ordinary. 	
	
\item[\textbf{(C2)}] In view of  \cref{rema666},  $ H $ automatically admits a modulus of continuity. The Dini type  integrability condition \eqref{Dini} in  \textbf{(H1)} is a direct generalization of H\"older's type, which can be seen in \cref{Holder}. Interestingly, it becomes the classical Dini condition \eqref{cdini} if $ \tau=n-1 $ and $ k=2\tau+2=2n $.

\item[\textbf{(C3)}] There is a large family of modulus of continuity  satisfying the classical Dini condition \eqref{cdini}, such as the Logarithmic H\"{o}lder's type $ \varpi_{\mathrm{LH}}^{\lambda} \left( x \right) \sim 1/{\left( { - \ln x} \right)^\lambda } $ with $ \lambda>1 $, and  even more complicated case: the generalized Logarithmic H\"older's type
\begin{equation}\label{mafan}
	\varpi_{\mathrm{GLH}}^{\varrho,\lambda} \left( x \right) \sim \frac{1}{{(\ln (1/x))(\ln \ln (1/x)) \cdots {{(\underbrace {\ln  \cdots \ln }_\varrho (1/x))}^\lambda }}}
\end{equation}
with any $ \varrho \in \mathbb{N}^+ $ and $ \lambda>1 $. In particular, $ \varpi_{\mathrm{LH}}^{\lambda}(x) \sim \varpi_{\mathrm{GLH}}^{1,\lambda}(x)$. Note that the above $ \lambda>1 $ cannot degenerate to $ 1 $, otherwise the Dini integral \eqref{cdini} diverges.

\item[\textbf{(C4)}]   According to the properties of Banach algebra, for the H\"older's type, it is assumed that \textbf{(H4)} only needs the term of $ \left| \alpha  \right| = 0 $, and does not need higher-order derivatives to satisfy the condition. However, for general modulus of continuity, it seems not easy to establish the corresponding Banach algebraic properties, we thus add higher-order derivatives in \textbf{(H4)}. %Once established,
     Sometimes they can be removed correspondingly.

\item[\textbf{(C5)}] The existence of $ k_i^* $ in \textbf{(H5)}  is directly guaranteed by \textbf{(H1)}, actually this assumption is proposed to investigate the higher regularity of the persistent KAM torus, that is, the regularity  to $ C^{k_i^*} $ plus certain modulus of continuity. In general, given an explicit modulus of continuity $ \varpi $, such $ k_i^* $ in \textbf{(H5)} are automatically  determined by using asymptotic analysis, see \cref{section6}.
\end{itemize}

Finally, we state the following frequency-preserving KAM theorem under sharp differentiability via only modulus of continuity:

\begin{theorem}[Main Theorem]\label{theorem1}
	Assume \textbf{(H1)}-\textbf{(H4)}. Then there is a solution
	\[x = u\left( \xi  \right),\;\;y = v\left( \xi  \right)\]
	of the following equation with the operator $ D: = \sum\limits_{\nu  = 1}^n {{\omega _\nu }\frac{\partial }{{\partial {\xi _\nu }}}}  $
	\begin{equation}\notag
		Du = {H_y}\left( {u,v} \right),\;\;Dv =  - {H_x}\left( {u,v} \right),
	\end{equation}
	such that $ u\left( \xi  \right) - \xi  $ and $ v\left( \xi  \right) $ are of period $ 1 $ in all variables, where $ u $ and $ v $ are at least $ C^1 $.
	
	 In addition, assume \textbf{(H5)}, then there exist $ {\varpi _i} $ ($ i=1,2 $) such that $ u \in {C_{k_1^ * ,{\varpi _1}}}\left( {{\mathbb{R}^n},{\mathbb{R}^n}} \right) $ and $ v \circ {u^{ - 1}} \in {C_{k_2^ * ,{\varpi _2}}}\left( {{\mathbb{R}^n},G} \right) $. Particularly, $ {\varpi _i} $ can be determined as follows
\begin{equation}\label{varpii}
	{\varpi _ i }\left( \gamma \right) \sim \gamma \int_{L_i\left( \gamma  \right)}^\varepsilon  {\frac{{\varphi_i \left( t \right)}}{{{t^{{k_i^*} + 2}}}}dt}  = {\mathcal{O}^\# }\left( {\int_0^{L_i\left( \gamma  \right)} {\frac{{\varphi_i \left( t \right)}}{{{t^{{k_i^*} + 1}}}}dt} } \right),\;\;\gamma  \to {0^ + },
\end{equation}
where $ L_i(\gamma) \to 0^+ $  are some functions such that the second relation in \eqref{varpii} holds for $ i=1,2 $.	
\end{theorem}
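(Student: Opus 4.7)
The plan is a Moser--Salamon style KAM iteration adapted to general moduli of continuity, followed by a separate regularity read-off via the companion iteration lemma \cref{t1}. First, I fix a geometric sequence $r_\nu \downarrow 0$ (say $r_\nu = r_0 \sigma^\nu$ with $\sigma \in (0,1)$) and, using the Jackson approximation \cref{Theorem1}, replace $H$ at step $\nu$ by its real-analytic approximant $H^{(\nu)} := S_{r_\nu} H$ on the strip $|\mathrm{Im}\, x| \leq r_\nu$. Decomposing $H^{(\nu)} = N^{(\nu)} + P^{(\nu)}$ with $N^{(\nu)}(y) = e^{(\nu)} + \langle \omega, y\rangle + \tfrac12 \langle Q^{(\nu)} y, y\rangle$, where the linear part isolates the prescribed frequency (which \textbf{(H3)} forces me to keep unchanged at every step) and $Q^{(\nu)}$ inherits nondegeneracy from \textbf{(H2)}, I would solve the homological equation $DF^{(\nu)} = [P^{(\nu)}] - P^{(\nu)}$ in Fourier series. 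The Diophantine estimate costs a factor $\alpha_*^{-1} s_\nu^{-\tau}$ on any sub-strip of width $r_\nu - s_\nu$, while correcting the $y$-direction to regain the normal form burns an additional $s_\nu^{-(\tau+2)}$. The symplectic time-one flow $\Phi^{(\nu)}$ of $F^{(\nu)}$ transports $H^{(\nu)}$ into $N^{(\nu+1)} + P^{(\nu+1)}$ plus the approximation defect $H^{(\nu+1)} - H^{(\nu)}$, bounded by \cref{coro1} as $\|H\|_\varpi r_\nu^k \varpi(r_\nu)$. Choosing $s_\nu \sim r_\nu$ one obtains the heuristic recursion
\begin{equation}\notag
\varepsilon_{\nu+1} \lesssim \alpha_*^{-2} r_\nu^{-(2\tau+2)} \varepsilon_\nu^2 + \|H\|_\varpi \, r_\nu^{k - (2\tau+2)} \varpi(r_\nu).
\end{equation}

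The natural ansatz $\varepsilon_\nu \sim r_\nu^{k-(2\tau+2)} \varpi(r_\nu)$, which \textbf{(H4)} provides as the initial size through \eqref{T1-2}, keeps the two contributions comparable. The total first-order displacement $\sum_\nu \|\Phi^{(\nu)} - \mathrm{id}\|_{C^1}$ then collapses into a Riemann sum comparable to $\int_0^{r_0} \varpi(x)/x^{2\tau+3-k}\, dx$, which converges precisely by the Dini-type condition \textbf{(H1)}. Uniform convergence of the composition $\Phi^{(0)} \circ \Phi^{(1)} \circ \cdots$ yields $C^1$ limits $u(\xi)$ and $v(\xi)$ with $u(\xi)-\xi$ and $v(\xi)$ of period one (the latter by \cref{coro2} at every stage); because the frequency is never translated, the limit satisfies $Du = H_y(u,v)$ and $Dv = -H_x(u,v)$ with the prescribed $\omega$, giving the first conclusion.

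For the refined regularity under \textbf{(H5)}, I would feed the per-step bounds $\varepsilon_\nu = \mathcal{O}(\varphi_i(r_\nu))$, with $\varphi_i(x) = x^{k - (3-i)\tau - 1}\varpi(x)$, into the iteration \cref{t1} tailored for this purpose. Its output is that a sequence of maps with approximation bounds of shape $\varphi_i$ has a limit of class $C_{k_i^*, \varpi_i}$, where $k_i^*$ is selected by the divergence--convergence pair in \textbf{(H5)} and the modulus $\varpi_i$ is synthesized by a dyadic split about an auxiliary scale $L_i(\gamma)$: the $k_i^*$-th derivative is dominated by $\int_0^{L_i(\gamma)} \varphi_i(t)/t^{k_i^*+1}\, dt$ while its $\gamma$-oscillation is dominated by $\gamma \int_{L_i(\gamma)}^{\varepsilon} \varphi_i(t)/t^{k_i^*+2}\, dt$, and balancing the two produces the exact asymptotic \eqref{varpii}. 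The distinction between $i=1$ and $i=2$ reflects that $v \circ u^{-1}$ pays only one small-divisor loss in $\tau$, since the action-from-angle readout is differentiated once less than the full conjugation.

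The main obstacle I foresee is the absence of a Banach-algebra inequality for $C_{k,\varpi}$ at a general modulus: products, compositions, and the implicit-function step forming $v \circ u^{-1}$ no longer reduce to a clean multiplicative H\"older bound. This is where the universal hypotheses on $\varpi$ enter decisively: semi separability lets me transport $\varpi(r_\nu \lambda)$ through the scale changes forced by \cref{Theorem1} as $\varpi(r_\nu)\psi(\lambda)$, and weak homogeneity collapses the ratios $\varpi(r_\nu)/\varpi(r_{\nu+1})$ appearing in the bookkeeping into bounded constants. Together these let the dyadic summations in both the KAM iteration and the regularity lemma close up cleanly, so that the naive H\"older arithmetic can be replaced, step by step, by honest pointwise estimates on the analytic approximants.
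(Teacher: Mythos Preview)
Your plan is sound and lands on the same two pillars as the paper: a KAM iteration whose total drift is summed against the Dini-type integral in \textbf{(H1)}, and then \cref{t1} applied to the per-step bounds $\varphi_i(r_\nu)$ to extract the refined modulus \eqref{varpii}. Your reading of why $i=1$ and $i=2$ differ by one factor of $r_\nu^\tau$, and of where semi separability and weak homogeneity enter, matches the paper exactly.

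The one genuine methodological difference is in how the single KAM step is executed. You propose to solve the homological equation in Fourier series and conjugate by the time-one Hamiltonian flow of the generating function, tracking a quadratic recursion $\varepsilon_{\nu+1}\lesssim r_\nu^{-(2\tau+2)}\varepsilon_\nu^2 + r_\nu^{k}\varpi(r_\nu)$. The paper instead treats the analytic KAM step as a black box: at stage $\nu$ it forms $\tilde H := H^\nu\circ\phi^{\nu-1}$, verifies the quantitative hypotheses of Salamon's analytic KAM theorem (quoted here as \cref{appendix}) with $r^*=\theta r_\nu$ and $\delta^*=c\,r_\nu^{k-2\tau-2}\varpi(r_\nu)$, and reads off the transformation $\psi^\nu$ together with the bounds \eqref{2.82}--\eqref{2.85} directly. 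The symplectic map comes from a generating function $S^\nu(x,\eta)=U^\nu(x)+\langle V^\nu(x),\eta\rangle$, not a time-one flow. This buys the paper a cleaner bookkeeping---no explicit quadratic term is ever tracked, and the induction reduces to checking three inequalities for $\tilde H$ at each step---at the cost of importing Salamon's result wholesale. Your route is more self-contained but would require you to redo the small-divisor and generating-function estimates that \cref{appendix} encapsulates; either way the summation $\sum_\nu r_\nu^{k-2\tau-2}\varpi(r_\nu)\lesssim\int_0^{2\varepsilon}\varpi(x)x^{k-2\tau-3}\,dx$ is the convergence mechanism, and the regularity read-off via \cref{t1} is identical.
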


\begin{remark}
	We call such a solution $x=u(\xi),y=v(\xi)$ the KAM one. 	
\end{remark}

\begin{remark}
	With the same as in \cite{salamon}, the unperturbed systems under consideration might be non-integrable (e.g., $ H = \left\langle {\omega ,y} \right\rangle  + \left\langle {A\left( x \right)y,y} \right\rangle  +  \cdots  $),  and the KAM persistence is of frequency-preserving. The main difference from \cite{salamon} is that the regularity of the high-order derivatives and the derived smoothness for persistent torus is weakened to only modulus of continuity from the H\"{o}lder's type.
\end{remark}
\begin{remark}
Actually  \cref{theorem1} provides a method for determining $ \varpi_i $ with $ i=1,2 $, see \eqref{varpii}. For the prescribed modulus of continuity to  Hamiltonian, such as the H\"older and Logarithmic H\"older type, we have to use asymptotic analysis to derive the concrete continuity of the KAM torus in \cref{section6}.
\end{remark}

As mentioned forego, the H\"older's type $ H \in  C^{\ell}(\mathbb{T}^n,G) $ with $ \ell>2\tau+2 $ (where $ \tau>n-1$ is the Diophantine exponent) is always regarded as the critical case. Let $ k=[\ell] $. Then $ k=2\tau+2=2n $ ($ \tau=n-1 $ at present) seems to be the critical case in our setting, and our Dini type integrability condition \eqref{Dini} becomes the classical Dini condition \eqref{cdini}! But it should be noted that, such Diophantine frequencies with $ \tau=n-1 $ can only form a set of zero Lebesgue measure  and are therefore not enough to represent almost all frequencies. In other words, for universal KAM persistence, we may have to require the generalized Dini  condition in \textbf{(H1)}, which reveals the deep  relationship between the \textit{irrationality} for frequency $ \omega $, \textit{order} and \textit{continuity} of  the highest derivatives for the Hamiltonian $ H $. Obviously, if the highest differentiable order $ k $ of $ H $ satisfies $ k\geq 2\tau+3 $ or even larger, then \textbf{(H1)} will become trivial because $ \varpi $ does not have a singularity at $ 0 $. But our KAM theorem still makes sense, because the regularity of the persistent torus will also increase.

\section{Applications}
\label{section6}
In this section, we show certain detailed regularity  about KAM torus such as H\"{o}lder and Logarithmic H\"{o}lder ones etc. Denote by $ \{a\} $ and $ [a] $ the fractional part and the integer part  of $ a\geq0 $, respectively. It should be emphasized that the Dini type  integrability condition \eqref{Dini} in \textbf{(H1)} is easy to verify, that is, the KAM persistence is easy to obtain. However, some techniques of asymptotic analysis are needed to investigate the specific regularity of KAM torus, which is mainly reflected in the selection of functions $ L_i (\gamma)$ ($ i=1,2 $) in \eqref{varpii}. In particular, we will explicitly see the degree of regularity loss caused by small divisors, see for instance, \cref{simichaos2,Holder,lognew} and the example shown in \cref{explicitexa}.

We apply our \cref{theorem1} from two different perspectives. In \cref{subnon}, for the minimum regularity $ C^{2n} $ that is critical under our approach,  we investigate KAM preservation in the sense of zero Lebesgue measure (corresponds to non-universal), i.e., first let $ k=2n $, then determine Diophantine nonresonance $ \tau=n-1 $; while in  \cref{subun}, for the given Diophantine nonresonance $ \tau>n-1 $ of  full Lebesgue measure in advance  (corresponds to universal), we study the minimum regularity requirement under our method. In what follows, the modulus of continuity under consideration are always convex near $ 0^+ $ and therefore automatically admit semi separability as well as weak homogeneity which we forego.

\subsection{Non-universal KAM persistence}\label{subnon}
 Focusing  on non-universal KAM persistence for Hamiltonian systems with action-angular variables  of freedom $ n $, Albrecht \cite{Chaotic} proved that $ C^{2n} $ plus certain modulus of continuity satisfying the classical Dini condition \eqref{cdini} for regularity requirement   is enough. The frequencies he used are of Diophantine class $ \tau=n-1 $ in \textbf{(H3)}, i.e., of zero Lebesgue measure. However, it is still interesting to study the remaining regularity of the KAM torus, which is still unknown so far. By applying \cref{theorem1} we directly obtain the following \cref{simichaos} similar to that in \cite{Chaotic}, therefore the proof is omitted here. To illustrate our results, we provide an explicit example in \cref{simichaos2}, and the proof  will be postponed to \cref{proofsimichaos2}.

\begin{theorem}\label{simichaos}
Let $ k=2n $ and $ \tau=n-1 $ be given. Assume that \textbf{(H1)} \textbf{(H2)}, \textbf{(H3)} and \textbf{(H4)} hold with a convex modulus of continuity $ \varpi $. That is, the Hamiltonian $ H $ only has  derivatives of order $ 2n $,  the prescribed frequency is of Diophantine  class $ n-1 $, and \textbf{(H1)} turns to the classical Dini condition \eqref{cdini}. Then the KAM persistence in \cref{theorem1} could be admited.
\end{theorem}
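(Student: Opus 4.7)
The plan is to treat \cref{simichaos} as a direct specialization of the main \cref{theorem1} under the parameter choice $k=2n$ and $\tau=n-1$, and to verify the four essential hypotheses \textbf{(H1)}--\textbf{(H4)} so that \cref{theorem1} applies as a black box.

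First, I would compute the exponent appearing in the generalized Dini integral \eqref{Dini}. Under $k=2n$, $\tau=n-1$, it degenerates to
\begin{equation*}
2\tau + 3 - k = 2(n-1) + 3 - 2n = 1,
\end{equation*}
so condition \textbf{(H1)} becomes $\int_0^1 \varpi(x)/x\,dx < +\infty$, i.e. the classical Dini condition \eqref{cdini} of Albrecht \cite{Chaotic}, which is exactly the integrability hypothesis prescribed in the statement. Thus the Dini piece of \textbf{(H1)} is supplied by assumption.

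Second, I would address the structural conditions required on $\varpi$ by \cref{theorem1}, namely semi separability (\cref{d2}) and weak homogeneity (\cref{weak}). These are used implicitly when \cref{theorem1} invokes the Jackson-type approximation \cref{Theorem1}. The paper observes (in the comments following \cref{weak}) that any convex modulus of continuity automatically enjoys both properties; since $\varpi$ is assumed convex near $0^+$, no further work is needed here. The remaining assumptions \textbf{(H2)} (boundedness and nondegeneracy), \textbf{(H3)} (Diophantine condition with $\tau=n-1$) and \textbf{(H4)} (KAM smallness at order $k=2n$) are taken directly from the hypotheses of \cref{simichaos}. With all four conditions of \cref{theorem1} verified, its first conclusion applies and yields a solution $x=u(\xi)$, $y=v(\xi)$ of the characteristic system
\begin{equation*}
Du = H_y(u,v), \qquad Dv = -H_x(u,v),
\end{equation*}
with $u(\xi)-\xi$ and $v(\xi)$ of period $1$ in all variables, at least $C^1$, and preserving the prescribed Diophantine frequency $\omega$.

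There is essentially no substantive obstacle in this proof; the only computation is the arithmetic identity $2\tau+3-k=1$ which makes the critical case of \textbf{(H1)} collapse onto the classical Dini integral. The purpose of stating \cref{simichaos} is to exhibit, within the unified framework of this paper, that Albrecht's non-universal KAM persistence is recovered in the critical regime $k=2n$, $\tau=n-1$; the quantitative regularity of the persistent torus (controlled by \textbf{(H5)} through the auxiliary functions $\varphi_i$ and $L_i$ in \eqref{varpii}) is intentionally not addressed at this level of generality but is instead carried out via asymptotic analysis in the explicit instance \cref{simichaos2}.
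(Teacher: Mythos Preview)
Your proposal is correct and matches the paper's approach exactly: the paper explicitly states that \cref{simichaos} follows by applying \cref{theorem1} directly and omits the proof, and you have simply filled in the verification that $2\tau+3-k=1$ reduces \textbf{(H1)} to the classical Dini condition \eqref{cdini}, that convexity of $\varpi$ supplies semi separability and weak homogeneity via \cref{Oxlemma,ruotux}, and that the remaining hypotheses are assumed verbatim.
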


\begin{theorem}\label{simichaos2}
In view of Comment \text{(C3)}, let the modulus of continuity in \cref{simichaos} be of the generalized Logarithmic H\"{o}lder's type in \eqref{mafan}, i.e.,
\begin{equation}\label{mafan2}
	\varpi_{\mathrm{GLH}}^{\varrho,\lambda} \left( x \right) \sim \frac{1}{{(\ln (1/x))(\ln \ln (1/x)) \cdots {{(\underbrace {\ln  \cdots \ln }_\varrho (1/x))}^\lambda }}}
\end{equation}
with $ \varrho \in \mathbb{N}^+ $ and $ \lambda>1 $. Then the remaining regularity in \cref{theorem1} is $ u \in {C_{1 ,{\varpi _1}}}\left( {{\mathbb{R}^n},{\mathbb{R}^n}} \right) $ and $ v \circ {u^{ - 1}} \in {C_{n,{\varpi _2}}}\left( {{\mathbb{R}^n},G} \right) $, where
\begin{equation}\label{rem}
{\varpi _1}\left( x \right) \sim {\varpi _2}\left( x \right) \sim \frac{1}{{{{(\underbrace {\ln  \cdots \ln }_\varrho (1/x))}^{\lambda  - 1}}}}.
\end{equation}
\end{theorem}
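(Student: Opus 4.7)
The plan is to verify the hypotheses of \cref{theorem1} for $\varpi=\varpi_{\mathrm{GLH}}^{\varrho,\lambda}$, identify the critical indices $k_1^*,k_2^*$ from (H5), and then evaluate the asymptotic formula \eqref{varpii} to extract \eqref{rem}. For brevity I write $\ln_\varrho(\cdot):=\underbrace{\ln\cdots\ln}_\varrho(\cdot)$.

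First, with $k=2n$ and $\tau=n-1$ the Dini type integrability \eqref{Dini} in (H1) reduces to the classical Dini condition $\int_0^1\varpi(x)/x\,dx<+\infty$. Applying the chain of substitutions $u_1=\ln(1/x)$, $u_2=\ln u_1,\ldots,u_\varrho=\ln u_{\varrho-1}$ collapses this integral to $\int_c^\infty u_\varrho^{-\lambda}\,du_\varrho$, which converges exactly when $\lambda>1$. Next, the definition $\varphi_i(x)=x^{k-(3-i)\tau-1}\varpi(x)$ yields $\varphi_1(x)=x\varpi(x)$ and $\varphi_2(x)=x^n\varpi(x)$. The same iterated-log substitution applied to $\int_0^1\varpi(x)/x^p\,dx$ shows convergence iff $p\le 1$, which forces $k_1^*=1$ and $k_2^*=n$ in (H5), matching the regularity orders asserted in the statement.

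The heart of the proof lies in evaluating \eqref{varpii}. For both $i=1,2$ the powers of $x$ cancel and reduce the formula to
\begin{equation*}
\varpi_i(\gamma)\sim \gamma\int_{L_i(\gamma)}^{\varepsilon}\frac{\varpi(x)}{x^2}\,dx=\mathcal{O}^{\#}\!\left(\int_0^{L_i(\gamma)}\frac{\varpi(x)}{x}\,dx\right).
\end{equation*}
Integration by parts together with the slowly varying estimate $\varpi'(x)\sim\varpi(x)/(x\ln(1/x))$ gives $\int_L^{\varepsilon}\varpi(x)/x^2\,dx\sim\varpi(L)/L$ as $L\to 0^+$, while the iterated-log substitution above furnishes $\int_0^L\varpi(x)/x\,dx\sim(\lambda-1)^{-1}(\ln_\varrho(1/L))^{1-\lambda}$.

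Equating the two sides of the equivalence in \eqref{varpii} then pins down $L_i(\gamma)$ via the implicit relation $\gamma\sim L\cdot\ln(1/L)\cdot\ln\ln(1/L)\cdots\ln_\varrho(1/L)$. Since each iterated logarithm is slowly varying, taking iterated logs on both sides yields $\ln_j(1/L_i(\gamma))\sim\ln_j(1/\gamma)$ for every $j=1,\dots,\varrho$; in particular $\ln_\varrho(1/L_i(\gamma))\sim\ln_\varrho(1/\gamma)$, from which \eqref{rem} follows by substitution. The main obstacle is the asymptotic bookkeeping with nested logarithms—verifying that the slowly varying corrections from inverting the implicit relation do not disturb the leading order in either expression of \eqref{varpii}—but once the correct ansatz $L_i(\gamma)\sim\gamma$ up to slowly varying factors is in place this reduces to a routine estimate.
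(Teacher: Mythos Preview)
Your proposal is correct and follows essentially the same route as the paper: determine $k_1^*=1,\ k_2^*=n$ from (H5), reduce both sides of \eqref{varpii} to integrals of $\varpi(x)/x^2$ and $\varpi(x)/x$, solve for $L_i(\gamma)$, and read off $\varpi_i$. The only differences are cosmetic: where the paper invokes \cref{duochongduishu} to obtain $\int_L^\varepsilon\varpi(x)/x^2\,dx=\mathcal{O}^{\#}(\varpi(L)/L)$, you reach the same asymptotic via integration by parts and the slowly varying estimate $\varpi'(x)\sim\varpi(x)/(x\ln(1/x))$; and where the paper writes down $L_i(\gamma)\sim\gamma\big/\big((\ln(1/\gamma))\cdots(\ln_\varrho(1/\gamma))\big)$ explicitly and verifies, you derive the same relation implicitly and invert.
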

\begin{remark}
Particularly \eqref{mafan2} reduces to the Logarithmic H\"older's type $ \varpi_{\mathrm{LH}}^{\lambda}(x) \sim 1/{(-\ln x)^\lambda} $ with $ \lambda>1 $ as long as $ \varrho=1 $. As can be seen that, the remaining regularity in \eqref{rem} is much weaker than that in \eqref{mafan2}, and it is indeed very weak if $ \lambda>1 $ is sufficiently close to $ 1 $ (but cannot degenerate to $ 1 $, see Comment \textbf{(C3)}), because the explicit modulus of continuity in \eqref{rem} tends to $ 0 $ quite slowly as $ x \to 0^+ $.
\end{remark}

\subsection{Universal KAM persistence}\label{subun}
In this subsection, we always assume that the prescribed Diophantine frequencies $ \omega $  are of full Lebesgue measure, that is, $ \tau >n-1 $ in \textbf{(H3)}. Note that for fixed $ n $, the parameter $ \tau  $ might be very large, and the frequencies being of  Diophantine class $ \tau  $ are at least continuum many. Under such setting, the known minimum regularity requirement for Hamiltonian $ H $ is H\"older's type $ C^{\ell} $ with $ \ell>2\tau +2 $, see Salamon \cite{salamon} and \cref{Holder} below. Interestingly, if one considers weaker modulus of continuity, such as $ C^{2\tau+2} $ plus  Logarithmic H\"older's type, the above regularity could be weakened, see our new  \cref{lognew}.

\subsubsection{H\"{o}lder continuous case}\label{subsubsub}
\begin{theorem}\label{Holder}
Let $ H \in C^{\ell} (\mathbb{T}^n,G) $ with $ \ell>2\tau +2 $, where  $ \ell  \notin \mathbb{N}^+ $, $ \ell-\tau \notin \mathbb{N}^+$ and $ \ell-2\tau \notin \mathbb{N}^+ $. That is, $ H $ is of $ C_{k,\varpi} $ with $ k=[\ell] $ and $ \varpi(x)\sim \varpi_{\mathrm{H}}^{\ell}(x)\sim x^{\{\ell\}} $.	Assume \textbf{(H2)}, \textbf{(H3)} and \textbf{(H4)}. Then there is a solution $ x = u\left( \xi  \right),y = v\left( \xi  \right) $	of the following equation with the operator $ D: = \sum\limits_{\nu  = 1}^n {{\omega _\nu }\frac{\partial }{{\partial {\xi _\nu }}}}  $
	\begin{equation}\notag
		Du = {H_y}\left( {u,v} \right),\;\;Dv =  - {H_x}\left( {u,v} \right)
	\end{equation}
	such that $ u\left( \xi  \right) - \xi  $ and $ v\left( \xi  \right) $ are of period $ 1 $ in all variables. In addition, $ u \in {C^{\ell-2\tau-1}}\left( {{\mathbb{R}^n},{\mathbb{R}^n}} \right) $ and $ v \circ {u^{ - 1}} \in {C^{\ell-\tau-1}}\left( {{\mathbb{R}^n},G} \right) $.
\end{theorem}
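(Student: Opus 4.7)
The plan is to derive Theorem \ref{Holder} as a direct corollary of the main Theorem \ref{theorem1}, which reduces to verifying the Dini type integrability hypothesis \textbf{(H1)} and the criticality hypothesis \textbf{(H5)} in the Hölder setting, and then converting the abstract regularity output \eqref{varpii} into concrete Hölder regularity via asymptotic analysis of the indicated integrals.

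First I would verify \textbf{(H1)}. The Hölder modulus $\varpi(x) \sim x^{\{\ell\}}$ is convex near $0^+$, so semi separability (\cref{d2}) and weak homogeneity (\cref{weak}) are automatic from the discussion below \cref{weak}. With $k=[\ell]$, the generalized Dini integral collapses to
\begin{equation*}
\int_0^1 \frac{x^{\{\ell\}}}{x^{2\tau+3-k}}\,dx \;\sim\; \int_0^1 x^{\ell - 2\tau - 3}\,dx,
\end{equation*}
which is finite precisely because $\ell > 2\tau + 2$ is assumed. Hypotheses \textbf{(H2)}, \textbf{(H3)}, \textbf{(H4)} transfer verbatim, so Theorem \ref{theorem1} already provides the KAM solution $x=u(\xi), y=v(\xi)$ with the stated periodicity and $C^1$ regularity.

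Next I would check \textbf{(H5)} and identify the critical integers. For $\varphi_i(x) = x^{k-(3-i)\tau - 1}\varpi(x)$, a direct substitution gives $\varphi_1(x) \sim x^{\ell-2\tau-1}$ and $\varphi_2(x) \sim x^{\ell-\tau-1}$. The power-integral $\int_0^1 \varphi_i(x)/x^{k_i^*+1}\,dx$ converges iff $k_i^* < \ell - (3-i)\tau - 1$, and the next integral diverges iff $k_i^* \geq \ell - (3-i)\tau - 2$. The non-integrality assumptions $\ell - \tau, \ell - 2\tau \notin \mathbb{N}^+$ guarantee that the unit interval $[\ell - (3-i)\tau - 2,\; \ell - (3-i)\tau - 1)$ contains exactly one integer, so $k_i^* = [\ell - (3-i)\tau - 1]$ is well defined, which also handles the assumption $\ell \notin \mathbb{N}^+$ indirectly (preventing the trivial case where $\varpi$ is smooth).

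Finally I would evaluate the asymptotic formulas \eqref{varpii}. Writing $f_i := \{\ell - (3-i)\tau\} \in (0,1)$, one computes elementarily that
\begin{equation*}
\int_0^{L_i(\gamma)} \frac{\varphi_i(t)}{t^{k_i^*+1}}\,dt \sim \int_0^{L_i(\gamma)} t^{f_i-1}\,dt \sim \frac{L_i(\gamma)^{f_i}}{f_i},
\end{equation*}
while
\begin{equation*}
\gamma \int_{L_i(\gamma)}^\varepsilon \frac{\varphi_i(t)}{t^{k_i^*+2}}\,dt \sim \gamma \int_{L_i(\gamma)}^\varepsilon t^{f_i-2}\,dt \sim \frac{\gamma}{1-f_i}\, L_i(\gamma)^{f_i - 1}.
\end{equation*}
Equating the two asymptotics (as demanded by \eqref{varpii}) forces $L_i(\gamma) \sim \gamma$, and hence $\varpi_i(\gamma) \sim \gamma^{f_i}$, i.e.\ $\varpi_i$ is a Hölder modulus of exponent $\{\ell - (3-i)\tau\}$. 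Since $k_i^* + f_i = \ell - (3-i)\tau - 1$, this is precisely $C^{\ell-2\tau-1}$ for $u$ and $C^{\ell-\tau-1}$ for $v\circ u^{-1}$.

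The main obstacle is purely bookkeeping rather than technical: one must carefully track fractional parts and verify that the three non-integrality conditions on $\ell$, $\ell-\tau$, $\ell-2\tau$ all play a role in preventing the appearance of degenerate logarithmic boundary cases in the power-integrals above. Apart from that, everything is a mechanical reduction to Theorem \ref{theorem1}.
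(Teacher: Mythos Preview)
Your proposal is correct and follows essentially the same route as the paper's own proof: verify the Dini integral \textbf{(H1)} via $\int_0^1 x^{\ell-2\tau-3}\,dx<\infty$, determine $k_i^*=[\ell-(3-i)\tau-1]$ from the criticality conditions in \textbf{(H5)}, take $L_i(\gamma)\sim\gamma$ in \eqref{varpii}, and read off $\varpi_i(\gamma)\sim\gamma^{\{\ell-(3-i)\tau\}}$ so that $k_i^*+\{\ell-(3-i)\tau\}=\ell-(3-i)\tau-1$. The only cosmetic difference is that the paper writes the fractional exponent as $\{\ell-(3-i)\tau-2\}$ rather than your $f_i=\{\ell-(3-i)\tau\}$, which of course coincide.
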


\cref{Holder} has been completely proved in \cite{salamon}. Significantly, the differentiability hypotheses under consideration  is sharp, i.e., it is close to the optimal one as in \cite{M1,M2}, where Herman gave a counterexample about the nonexistence of an invariant curve for an annulus mapping of $ {C^{3 - \epsilon }} $ with $ 0 < \epsilon  \ll 1 $ corresponds to the case $ n=2,\ell=4-\varepsilon $ in our setting, which implies the sharpness of \cref{Holder}. See more from \cite{R-55,F-1}.

\subsubsection{H\"{o}lder plus Logarithmic H\"{o}lder continuous case}
To show different modulus of continuity  weaker than H\"older's type, we establish the following \cref{lognew}. %One will see below that the analysis is partly similar to that in \cref{simichaos2}. %However, in order to distinguish the full Lebesgue measure and zero Lebesgue measure of nonresonance, we show them separately.
 One will see later that \cref{lognew} employs  more complicated asymptotic analysis than \cref{simichaos2}, and interestingly, the remaining regularity $ \varpi_1 $ and $ \varpi_2 $ admit different forms. In fact, \cref{lognew} can completely contain the case of \cref{simichaos2}, that is, $ \tau=n-1 $, and $ \varrho=1 $ in \eqref{mafan2}. However, in order to distinguish the full Lebesgue measure and zero Lebesgue measure of Diophantine nonresonance, we show them separately.
\begin{theorem}\label{lognew}
Let $ \tau>n-1 $ be given and let $ H\in C_{[2\tau+2], \varpi} $, where $ \varpi \left( x \right) \sim {x^{\{2\tau+2\}}}/{\left( { - \ln x} \right)^\lambda } $ with $ \lambda  > 1 $. Assume \textbf{(H2)}, \textbf{(H3)} and \textbf{(H4)}. That is, $ H $ is of $ C^{k} $ plus the above $ \varpi $ with $k= [2\tau+2] $. Then there is a solution $ x = u\left( \xi  \right),y = v\left( \xi  \right) $	of the following equation with the operator $ D: = \sum\limits_{\nu  = 1}^n {{\omega _\nu }\frac{\partial }{{\partial {\xi _\nu }}}}  $
\begin{equation}\notag
	Du = {H_y}\left( {u,v} \right),\;\;Dv =  - {H_x}\left( {u,v} \right)
\end{equation}
such that $ u\left( \xi  \right) - \xi  $ and $ v\left( \xi  \right) $ are of period $ 1 $ in all variables. In addition, letting
\[{\varpi _1}\left( x \right) \sim \frac{1}{{{{\left( { - \ln x} \right)}^{\lambda  - 1}}}} \sim \varpi _{\mathrm{LH}}^{\lambda  - 1}\left( x \right),\]
and
%\[{\varpi _2}\left( x \right) \sim \frac{{{x^{\left\{ \tau  \right\}}}}}{{{{\left( { - \ln x} \right)}^\lambda }}} \sim {x^{\left\{ \tau  \right\}}}\varpi _{\mathrm{LH}}^\lambda \left( x \right),\]
\[	{\varpi _2}\left( x \right) \sim \left\{ \begin{aligned}
	&{\frac{1}{{{{\left( { - \ln x} \right)}^{\lambda  - 1}}}} \sim \varpi _{\mathrm{LH}}^{\lambda  - 1}\left( x \right)},&n-1<\tau  \in {\mathbb{N}^ + } \hfill, \\
	&{\frac{{{x^{\left\{ \tau  \right\}}}}}{{{{\left( { - \ln x} \right)}^\lambda }}} \sim {x^{\left\{ \tau  \right\}}}\varpi _{\mathrm{LH}}^\lambda \left( x \right)},&n-1<\tau  \notin {\mathbb{N}^ + } \hfill, \\
\end{aligned}  \right.\]
one has that $ u \in {C_{1 ,{\varpi _1}}}\left( {{\mathbb{R}^n},{\mathbb{R}^n}} \right) $ and $ v \circ {u^{ - 1}} \in {C_{[\tau+1] ,{\varpi _2}}}\left( {{\mathbb{R}^n},G} \right) $.
\end{theorem}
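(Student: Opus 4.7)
The plan is to apply the main \cref{theorem1} directly and perform the asymptotic analysis dictated by formula \eqref{varpii}. First I would observe that $\varpi(x)\sim x^{\{2\tau+2\}}/(-\ln x)^\lambda$ is convex in a right neighborhood of $0$, hence automatically semi separable and weakly homogeneous by the comment after \cref{weak}. The conditions \textbf{(H2)}, \textbf{(H3)} and \textbf{(H4)} are hypotheses of the theorem, so it remains to verify \textbf{(H1)} and \textbf{(H5)} and then to extract the explicit moduli $\varpi_1,\varpi_2$ through the formula in \cref{theorem1}.

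For \textbf{(H1)} with $k=[2\tau+2]$, the identity $2\tau+3-k = \{2\tau+2\}+1$ reduces the Dini type integrand in \eqref{Dini} to $1/(x(-\ln x)^\lambda)$, which is integrable on $(0,1]$ precisely because $\lambda>1$. For \textbf{(H5)}, a direct computation using $[2\tau+2]+\{2\tau+2\}=2\tau+2$ gives $\varphi_1(x)=x/(-\ln x)^\lambda$ and $\varphi_2(x)=x^{\tau+1}/(-\ln x)^\lambda$ irrespective of whether $\tau$ is integral. The substitution $u=-\ln t$ then yields $k_1^\ast=1$, since $\int_0^\delta t^{-k}(-\ln t)^{-\lambda}\,dt$ converges for $k\leq 1$ (via $\lambda>1$) and diverges for $k=2$. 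For $\varphi_2$, splitting on whether $\tau$ is integral gives $k_2^\ast=\tau+1$ in the integral case and $k_2^\ast=[\tau]+1$ otherwise, which in both situations coincides with $[\tau+1]$, in agreement with the statement.

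It then remains to solve the second $\mathcal{O}^{\#}$ equation in \eqref{varpii} for $L_i(\gamma)$ and compute the resulting $\varpi_i(\gamma)$. For $\varpi_1$, the antiderivative of $1/(t^2(-\ln t)^\lambda)$ is asymptotic to $-1/(t(-\ln t)^\lambda)$, and that of $1/(t(-\ln t)^\lambda)$ to $(-\ln t)^{-(\lambda-1)}/(\lambda-1)$; the balance $\gamma/(L_1(-\ln L_1)^\lambda)\sim (-\ln L_1)^{-(\lambda-1)}$ is satisfied by $L_1(\gamma)\sim \gamma/(-\ln\gamma)$, which satisfies $-\ln L_1(\gamma)\sim -\ln\gamma$, and therefore $\varpi_1(\gamma)\sim 1/(-\ln\gamma)^{\lambda-1}$. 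For $\varpi_2$ in the integer case $\tau\in \mathbb{N}^+$, $k_2^\ast+2=\tau+3$ cancels with $t^{\tau+1}$ to give the same density $t^{-2}(-\ln t)^{-\lambda}$ as in the $\varpi_1$ analysis; one again finds $L_2(\gamma)\sim \gamma/(-\ln\gamma)$ and $\varpi_2(\gamma)\sim 1/(-\ln\gamma)^{\lambda-1}$. In the fractional case the density becomes $t^{\{\tau\}-2}/(-\ln t)^\lambda$; treating the slowly varying logarithm as constant one gets antiderivative asymptotic to $t^{\{\tau\}-1}/((\{\tau\}-1)(-\ln t)^\lambda)$, and matching against $\int_0^L t^{\{\tau\}-1}/(-\ln t)^\lambda\,dt\sim L^{\{\tau\}}/(\{\tau\}(-\ln L)^\lambda)$ forces $L_2(\gamma)\sim \gamma$ and hence $\varpi_2(\gamma)\sim \gamma^{\{\tau\}}/(-\ln\gamma)^\lambda$, exactly as claimed.

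The main obstacle is the fractional $\tau$ branch of the $\varpi_2$ computation: the log factor is slowly varying relative to the power factor but must be controlled rigorously to justify that $\int t^{\{\tau\}-2}(-\ln t)^{-\lambda}\,dt = t^{\{\tau\}-1}/\bigl((\{\tau\}-1)(-\ln t)^\lambda\bigr)(1+o(1))$ as $t\to 0^+$, which I would establish by one step of integration by parts (or equivalently a de l'Hospital argument). Once this routine but careful asymptotic step is in place, \cref{theorem1} supplies the existence of the prescribed KAM solution $x=u(\xi),\,y=v(\xi)$ with frequency $\omega$, the periodicity of $u(\xi)-\xi$ and $v(\xi)$, and the regularities $u\in C_{1,\varpi_1}$ and $v\circ u^{-1}\in C_{[\tau+1],\varpi_2}$, completing the proof.
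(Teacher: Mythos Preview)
Your proposal is correct and follows essentially the same route as the paper: verify \textbf{(H1)} via the reduction to $\int_0^1 (x(-\ln x)^\lambda)^{-1}\,dx$, compute $\varphi_1(x)=x/(-\ln x)^\lambda$ and $\varphi_2(x)=x^{\tau+1}/(-\ln x)^\lambda$ to read off $k_1^*=1$ and $k_2^*=[\tau+1]$, and then solve the balance \eqref{varpii} by choosing $L_i(\gamma)\sim\gamma/(-\ln\gamma)$ in the borderline (integer-$\tau$) cases and $L_2(\gamma)\sim\gamma$ in the fractional-$\tau$ case. The only cosmetic difference is that the paper packages the asymptotics you derive by integration by parts into two appendix lemmas (\cref{duochongduishu} and \cref{erheyi}), whereas you perform them in place; your identification of the fractional-$\tau$ branch as the delicate step matches the paper's treatment exactly.
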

\begin{remark}
Similar to \cref{simichaos2}, one can also consider the generalized Logarithmic H\"older's type \eqref{mafan2} instead of the Logarithmic H\"older one. Only the latter is presented here for simplicity.
\end{remark}

\subsection{An explicit example of Logarithmic H\"{o}lder's type}\label{explicitexa}
To illustrate the wider applicability of our theorems, we shall present an explicit example strictly beyond H\"older's type. Note that  the H\"older plus Logarithmic H\"older regularity for $ H $ in \cref{lognew} becomes simpler Logarithmic H\"older's type  for $ 2n<2\tau+2 \in \mathbb{N}^+ $ (because $ \{2 \tau+2 \}=0 $), we therefore consider the following setting.

Recall \cref{lognew}. Let $ n = 2,\tau  = 2, k = 6=[2\tau+2], {\alpha _ * } > 0,\lambda  > 1$ and $M > 0 $ be  given. Assume that $ \left( {x,y} \right) \in {\mathbb{T}^2} \times G $ with $ G := \{ {y \in {\mathbb{R}^2}:\left| y \right| \leq 1} \} $, and the frequency $ \omega  = {\left( {{\omega _1},{\omega _2}} \right)^T} \in \mathbb{R}^2 $ satisfies
\begin{equation}\notag
	| {\langle {{\tilde k},\omega } \rangle } | \geq \alpha_ *{ | {{\tilde k }} |}^{-2} ,\;\;\forall 0 \ne \tilde k \in {\mathbb{Z}^2},\;\;|\tilde k|: = |k_1|+|k_2|,
\end{equation}
i.e., with full Lebesgue measure. Now we shall construct a function for finite smooth  perturbation, whose regularity is $ C^6 $ plus Logarithmic H\"older's type $ \varpi_{\mathrm{LH}}^{\lambda}(r) \sim 1/(-\ln r)^{\lambda} $ with index $ \lambda>1 $. Namely, define
\begin{equation}\notag
	P(r): =  \left\{ \begin{aligned}
		&{{\int_0^r { \cdots \int_0^{{s_2}} {\frac{1}{{{{(1 - \ln \left| {{s_1}} \right|)}^\lambda }}}d{s_1} \cdots d{s_6}} }}},&{0 < \left| r \right| \leq 1} \hfill, \\
		&{0},&{r=0} \hfill. \\
	\end{aligned}  \right.
\end{equation}
Obviously $ P(r)\in C_{6,\varpi_{\mathrm{LH}}^{\lambda}} ([-1,1])$.
Let us consider the perturbed Hamiltonian function below with some constant $ 0 < {\varepsilon } < {\varepsilon ^ * } $ sufficiently small ($ {\varepsilon ^ * } $ depends on the constants given above):
\begin{equation}\label{HH}
	H(x,y,\varepsilon ) = {\omega _1}{y_1} + {\omega _2}{y_2} + \frac{1}{M}(y_1^2 + y_2^2) + \varepsilon \left( {\sin (2\pi {x_1}) + \sin (2\pi {x_2}) + P({y_1}) + P\left( {{y_2}} \right)} \right).
\end{equation}
At this point, we have
\begin{align*}
	\left| {{{\left( {\int_{{\mathbb{T}^2}} {{H_{yy}}\left( {\xi ,0} \right)d\xi } } \right)}^{ - 1}}} \right| &= \left| {{{\left( {\int_{{\mathbb{T}^2}} {\left( {\begin{array}{*{20}{c}}
								{2{M^{ - 1}}}&0 \\
								0&{2{M^{ - 1}}}
						\end{array}} \right)d\xi } } \right)}^{ - 1}}} \right| \notag \\
	&= \left| {\left( {\begin{array}{*{20}{c}}
				{{2^{ - 1}}M}&0 \\
				0&{{2^{ - 1}}M}
		\end{array}} \right)} \right| \leq M <  + \infty .
\end{align*}
In addition, one can verify that $ H \in {C_{6,{\varpi_{\mathrm{LH}}^{\lambda}}}}( {{\mathbb{T}^2} \times G} ) $ with $ \varpi_{\mathrm{LH}}^{\lambda}(r) \sim 1/(-\ln r)^{\lambda} $.

However, for $ \tilde \alpha  = {\left( {0,0,6,0} \right)^T} $ with $ \left| {\tilde \alpha } \right| = 6 = k $, we have
\[\left| {{\partial ^{\tilde \alpha }}H\left( {{{\left( {0,0} \right)}^T},{{\left( {{y_1},0} \right)}^T},\varepsilon } \right) - {\partial ^{\tilde \alpha }}H\left( {{{\left( {0,0} \right)}^T},{{\left( {0,0} \right)}^T},\varepsilon } \right)} \right| = \frac{\varepsilon }{{{{(1 - \ln \left| {{y_1}} \right|)}^\lambda }}} \geq \varepsilon {c_{\lambda ,\ell }}{\left| {{y_1}} \right|^\ell }\]
for any $ 0<\ell\leq1 $, where $ c_{\lambda ,\ell } >0$ is a constant that only depends on $ \lambda $ and $ \ell $. This implies that $ H \notin {C_{6,{\varpi _{\mathrm{H}}^\ell}}}( {{\mathbb{T}^2} \times G} )  $ with $ {\varpi _{\mathrm{H}}^\ell}(r) \sim {r^\ell } $, i.e., $ H \notin C^{6+\ell}( {{\mathbb{T}^2} \times G} ) $ with any $ 0<\ell \leq 1 $, because $ \varpi_{\mathrm{LH}}^{\lambda} $ is strictly weaker than $ \varpi _{\mathrm{H}}^\ell $, see also \cref{strict}.

In other words, the highest derivatives (of order $ k=6 $) of $ H $ in \eqref{HH} can be rigorously proved to be Logarithmic H\"{o}lder continuous with index $ \lambda>1 $, but not any  H\"{o}lder's type. Therefore, the finite smooth KAM theorems via classical H\"{o}lder continuity cannot be applied. But, all the assumptions of  \cref{lognew} can be verified to be satisfied, then the invariant torus persists, and the frequency $ \omega  = {\left( {{\omega _1},{\omega _2}} \right)^T} $ for the unperturbed system can remain unchanged. Moreover, the remaining regularity for mappings $ u $ and $ v \circ u^{-1} $ in \cref{lognew} could also be determined as $ u \in {C_{1 ,{\varpi _\mathrm{LH}^{\lambda-1}}}}\left( {{\mathbb{R}^n},{\mathbb{R}^n}} \right) $   and $ v \circ {u^{ - 1}} \in {C_{3 ,{\varpi _\mathrm{LH}^{\lambda-1}}}}\left( {{\mathbb{R}^n},G} \right) $, where $ \varpi _\mathrm{LH}^{\lambda-1}(r)\sim 1/(-\ln r)^{\lambda-1} $. More precisely,  $ u $ is at least $ C^1 $, while $ v \circ u^{-1}$ is least $ C^3 $, and the higher regularity for them is still not any H\"older's type, but Logarithmic H\"older one with index $ \lambda-1 $, i.e., lower than the original index $ \lambda>1 $, this is because the small divisors causes the loss of regularity.

\section{Proof of  \cref{theorem1}}\label{section4}

Now let us prove  \cref{theorem1} by separating two subsections, namely frequency-preserving KAM persistence (\cref{KAM}) and further regularity (\cref{furtherregularity}) for KAM torus. For the former, the overall process is similar to that in \cite{salamon}, but the key points to weaken the H\"older regularity to only modulus of continuity are using \cref{Theorem1} and proving the uniform convergence of the transformation mapping, that is, the convergence of the upper bound series (see \eqref{dao} and \eqref{dao2}). As we will see later, the Dini type integrability condition \eqref{Dini} in \textbf{(H1)} guarantees this. As to the latter, we have to establish a more general regularity iterative theorem (\cref{t1}) which is not trivial since the resulting regularity might be somewhat  complicated due to asymptotic analysis.

\subsection{Frequency-preserving KAM persistence}\label{KAM}
The proof of the frequency-preserving KAM persistence is organized as follows. Firstly, we construct a series of analytic approximation functions $ H^\nu $ of $ H $ by using  \cref{Theorem1} and considering \textbf{(H1)} and \textbf{(H2)}. Secondly, we shall construct a sequence of frequency-preserving analytic and symplectic transformations $ \psi^\nu $ by induction. According to \textbf{(H2)}, \textbf{(H3)} and \textbf{(H4)}, the first step of induction is established by applying  \cref{appendix} in \cref{Appsalamon} (or Theorem 1 in \cite{salamon}). Then, combining with weak homogeneity and certain specific estimates we complete the proof of induction and obtain the uniform convergence of the composite transformations. Finally, in the light of \textbf{(H5)}, the regularity of the KAM torus is guaranteed by  \cref{t1}.
\\
{\textbf{Step1:}} In view of   \cref{Theorem1} (we have assumed that the modulus of continuity $ \varpi $ admits semi separability and thus \cref{Theorem1} could be applied here), one could approximate $ H(x, y) $ by a sequence of real analytic functions $ H_\nu(x, y) $ for $ \nu \geq 0 $ in the strips
\[\left| {\operatorname{Im} x} \right| \leq {r_\nu },\;\;\left| {\operatorname{Im} y} \right| \leq {r_\nu },\;\;{r_\nu }: = {2^{ - \nu }}\varepsilon \]
around $ \left| {\operatorname{Re} x} \right| \in {\mathbb{T}^n},\left| {\operatorname{Re} y} \right| \leq \rho , $ such that
\begin{equation}\label{T1-3}
	\begin{aligned}
		\left| {{H^\nu }\left( z \right) - \sum\limits_{\left| \alpha  \right| \leq k} {{\partial ^\alpha }H\left( {\operatorname{Re} z} \right)\frac{{{{\left( {\mathrm{i}\operatorname{Im} z} \right)}^\alpha }}}{{\alpha !}}} } \right| \leq{}& {c_1}{\left\| H \right\|_\varpi }r_\nu ^k\varpi \left( {{r_\nu }} \right),\\
		\left| {H_y^\nu \left( z \right) - \sum\limits_{\left| \alpha  \right| \leq k-1} {{\partial ^\alpha }{H_y}\left( {\operatorname{Re} z} \right)\frac{{{{\left( {\mathrm{i}\operatorname{Im} z} \right)}^\alpha }}}{{\alpha !}}} } \right| \leq{}& {c_1}{\left\| H \right\|_\varpi }r_\nu ^{k - 1}\varpi \left( {{r_\nu }} \right), \\
		\left| {H_{yy}^\nu \left( z \right) - \sum\limits_{\left| \alpha  \right| \leq k-2} {{\partial ^\alpha }{H_{yy}}\left( {\operatorname{Re} z} \right)\frac{{{{\left( {\mathrm{i}\operatorname{Im} z} \right)}^\alpha }}}{{\alpha !}}} } \right| \leq{}& {c_1}{\left\| H \right\|_\varpi }r_\nu ^{k - 2}\varpi \left( {{r_\nu }} \right)
	\end{aligned}
\end{equation}
for $ \left| {\operatorname{Im} x} \right| \leq {r_\nu },\;\left| {\operatorname{Im} y} \right| \leq {r_\nu } $, and $ c_1=c(n,k) $ is the constant provided in \eqref{3.2}.
%It can be seen that $ r = O\left( {\vartheta \left( r \right)} \right),\;r \to {0^ + } $(since $ \vartheta \left( r \right) \geq \varpi \left( r \right) \geq cr $), which will be used in Step5 to prove the regularity via theorems we established.

Fix $ \theta  = 1/\sqrt 2  $. In what follows,  we will construct a sequence of real analytic symplectic transformations $ z=(x,y),\zeta=(\xi,\eta),z = {\phi ^\nu }\left( \zeta  \right) $ of the form
\begin{equation}\label{bhxs}
	x = {u^\nu }\left( \xi  \right),\;\;y = v^{\nu}\left( \xi  \right) + (u_\xi ^\nu)^T{\left( \xi  \right)^{ - 1}}\eta
\end{equation}
by induction, such that $ {u^\nu }\left( \xi  \right) - \xi  $ and $ {v^\nu }\left( \xi  \right) $ are of period $ 1 $ in all variables, and $ {\phi ^\nu } $ maps the strip $ \left| {\operatorname{Im} \xi } \right| ,\left| \eta  \right| \leq \theta {r_{\nu  + 1}} $ into $ \left| {\operatorname{Im} x} \right| ,\left| y \right| \leq {r_\nu },\left| {\operatorname{Re} y} \right| \leq \rho  $, and the transformed Hamiltonian function $ 	{K^\nu }: = {H^\nu } \circ {\phi ^\nu } $ satisfies
\begin{equation}\label{qiudao}
	K_\xi ^\nu \left( {\xi ,0} \right) = 0,\;\;K_\eta ^\nu \left( {\xi ,0} \right) = \omega ,
\end{equation}
i.e., with prescribed frequency-preserving. Namely by verifying certain conditions we obtain $ z=\psi^\nu(\zeta) $ of the form \eqref{bhxs} from \cref{appendix} by induction, mapping $ \left| {\operatorname{Im} \xi } \right|,\left| \eta  \right| \leq {r_{\nu  + 1}} $ into $ \left| {\operatorname{Im} x} \right|,\left| y \right| \leq \theta {r_\nu } $, and $ \psi^\nu \left( {\xi ,0} \right) - \left( {\xi ,0} \right) $ is of period $ 1 $, and \eqref{qiudao} holds. Here we denote $ \phi^\nu:=\phi^{\nu-1} \circ \psi^\nu$ with $ {\phi ^{ - 1}}: = \mathrm{id} $ (where $ \mathrm{id} $ denotes the $ 2n $-dimensional identity mapping and therefore $ {\phi ^0} = {\psi ^0} $). Further more, \cref{appendix} will lead to
\begin{align}
	\left| {{\psi ^\nu }\left( \zeta  \right) - \zeta } \right| &\leq c\left( {1 - \theta } \right)r_\nu ^{k - 2\tau  - 1}\varpi \left( {{r_\nu }} \right),\label{2.82}\\
	\left| {\psi _\zeta ^\nu\left( \zeta  \right) - \mathbb{I}} \right| &\leq cr_\nu ^{k - 2\tau  - 2}\varpi \left( {{r_\nu }} \right),\label{2.83}\\
	\left| {K_{\eta \eta }^\nu\left( \zeta  \right) - {Q^\nu}\left( \zeta  \right)} \right| &\leq cr_\nu ^{k - 2\tau  - 2}\varpi \left( {{r_\nu }} \right)/2M,\label{2.84}\\
	\left| {U_x^\nu \left( x \right)} \right| &\leq cr_\nu ^{k - \tau  - 1}\varpi \left( {{r_\nu }} \right),\label{2.85}
\end{align}
on $ \left| {\operatorname{Im} \xi } \right|,\left| \eta  \right|,\left| {\operatorname{Im} x} \right| \leq r_{\nu+1} $, where $ {S^\nu }\left( {x,\eta } \right) = {U^\nu }\left( x \right) + \left\langle {{V^\nu }\left( x \right),\eta } \right\rangle  $ is the generating function for $ {\psi ^\nu } $, and $ Q^\nu:=K_{\eta \eta}^{\nu-1} $, and $ \mathbb{I} $ denotes the $ 2n \times 2n $-dimensional identity mapping,  and
\begin{equation}\label{Q0}
	{Q^0}\left( z \right): = \sum\limits_{\left| \alpha  \right| \leq k - 2} {{\partial ^\alpha }{H_{yy}}\left( {\operatorname{Re} z} \right)\frac{{{{\left( {\mathrm{i}\operatorname{Im} x} \right)}^\alpha }}}{{\alpha !}}} .
\end{equation}
\\
%For simplicity, denote
{\textbf{Step2:}} Here we show that $ \psi^0=\phi^0 $ exists, and it admits the properties mentioned in Step 1. Denote
\begin{equation}\notag
	h(x) := H\left( {x,0} \right) - \int_{{\mathbb{T}^n}} {H\left( {\xi ,0} \right)d\xi } ,\;\;x \in {\mathbb{R}^n}.
\end{equation}
Then by the first term in \eqref{T1-2}, we have
\begin{equation}\label{pianh}
	\sum\limits_{\left| \alpha  \right| \leq k} {\left| {{\partial ^\alpha }h} \right|{\varepsilon ^{\left| \alpha  \right|}}}  < M{\varepsilon ^k}\varpi \left( \varepsilon  \right).
\end{equation}
Note that
\begin{align*}
	{H^0}\left( {x,0} \right) - \int_{{\mathbb{T}^n}} {{H^0}\left( {\xi ,0} \right)d\xi }  ={}& {H^0}\left( {x,0} \right) - \sum\limits_{\left| \alpha  \right| \leq k} {\partial _x^\alpha H\left( {\operatorname{Re} x,0} \right)\frac{{{{\left( {\mathrm{i}\operatorname{Im} x} \right)}^\alpha }}}{{\alpha !}}} \notag \\
	{}&+ \int_{{\mathbb{T}^n}} {\left( {H\left( {\xi ,0} \right) - {H^0}\left( {\xi ,0} \right)} \right)d\xi } \notag \\
	{}&+ \sum\limits_{\left| \alpha  \right| \leq k} {{\partial ^\alpha }h\left( {\operatorname{Re} x} \right)\frac{{{{\left( {\mathrm{i}\operatorname{Im} x} \right)}^\alpha }}}{{\alpha !}}} .
\end{align*}
Hence, for $ \left| {\operatorname{Im} x} \right| \leq \theta {r_0} = \theta \varepsilon  $, by using \cref{Theorem1}, \cref{coro1} and \eqref{pianh} we arrive at
\begin{align*}
	\left| {{H^0}\left( {x,0} \right) - \int_{{\mathbb{T}^n}} {{H^0}\left( {\xi ,0} \right)d\xi } } \right| &\leq 2{c_1}{\left\| H \right\|_\varpi }{\varepsilon ^k}\varpi \left( \varepsilon  \right) + M{\varepsilon ^k}\varpi \left( \varepsilon  \right)\notag \\
	&\leq c{\varepsilon ^k}\varpi \left( \varepsilon  \right) \leq c{\varepsilon ^{k - 2\tau  - 2}}\varpi \left( \varepsilon  \right) \cdot {\left( {\theta \varepsilon } \right)^{2\tau  + 2}}.
\end{align*}
Now consider the vector valued function $ 	f\left( x \right): = {H_y}\left( {x,0} \right) - \omega  $ for $ x \in {\mathbb{R}^n} $. In view of the second term in \eqref{T1-2}, we have
\begin{equation}\label{pianf}
	\sum\limits_{\left| \alpha  \right| \leq k - 1} {\left| {{\partial ^\alpha }f} \right|{\varepsilon ^{\left| \alpha  \right|}}}  \leq M{\varepsilon ^{k - \tau  - 1}}\varpi \left( \varepsilon  \right).
\end{equation}
Note that
\begin{align*}
	H_y^0\left( {x,0} \right) - \omega  ={}& H_y^0\left( {x,0} \right) - \sum\limits_{\left| \alpha  \right| \leq k - 1} {\partial _x^\alpha {H_y}\left( {\operatorname{Re} x,0} \right)\frac{{{{\left( {\mathrm{i}\operatorname{Im} x} \right)}^\alpha }}}{{\alpha !}}} \notag \\
	{}&+ \sum\limits_{\left| \alpha  \right| \leq k - 1} {{\partial ^\alpha }f\left( {\operatorname{Re} x} \right)\frac{{{{\left( {\mathrm{i}\operatorname{Im} x} \right)}^\alpha }}}{{\alpha !}}}.
\end{align*}
Therefore, for $ \left| {\operatorname{Im} x} \right| \leq \theta \varepsilon  $, by using \eqref{T1-3} and \eqref{pianf} we obtain that
\begin{align*}
	\left| {H_y^0\left( {x,0} \right) - \omega } \right| &\leq {c_1}{\left\| H \right\|_\varpi }{\varepsilon ^{k - 1}}\varpi \left( \varepsilon  \right) + M{\varepsilon ^{k - \tau  - 1}}\varpi \left( \varepsilon  \right)\notag \\
	&\leq c{\varepsilon ^{k - \tau  - 1}}\varpi \left( \varepsilon  \right) \leq c{\varepsilon ^{k - 2\tau  - 2}}\varpi \left( \varepsilon  \right) \cdot {\left( {\theta \varepsilon } \right)^{\tau  + 1}}.
\end{align*}
Recall \eqref{Q0}. Then it follows from \eqref{T1-3} that
\begin{align*}
	\left| {H_{yy}^0\left( z \right) - {Q^0}\left( z \right)} \right| &\leq {c_1}{\left\| H \right\|_\varpi }{\varepsilon ^{k - 2}}\varpi \left( \varepsilon  \right) \leq \frac{c}{{4M}}{\varepsilon ^{k - 2}}\varpi \left( \varepsilon  \right) \\
	&\leq \frac{c}{{4M}}{\varepsilon ^{k - 2\tau  - 2}}\varpi \left( \varepsilon  \right),\;\;\left| {\operatorname{Im} x} \right|,\left| y \right| \leq \theta \varepsilon,
\end{align*}
and
\begin{equation}\notag
	\left| {{Q^0}\left( z \right)} \right| \leq \sum\limits_{\left| \alpha  \right| \leq k - 2} {{{\left\| H \right\|}_\varpi }\frac{{{\varepsilon ^{\left| \alpha  \right|}}}}{{\alpha !}}}  \leq {\left\| H \right\|_\varpi }\sum\limits_{\alpha  \in {\mathbb{N}^{2n}}} {\frac{{{\varepsilon ^{\left| \alpha  \right|}}}}{{\alpha !}}}  = {\left\| H \right\|_\varpi }{e^{2n\varepsilon }} \leq 2M,\;\; \left| {\operatorname{Im} z} \right| \leq \varepsilon  .
\end{equation}

Now, by taking $ r^{*} = \theta \varepsilon ,\delta^{*}  = {\varepsilon ^{k - 2\tau  - 2}}\varpi \left( \varepsilon  \right) $ and using  \cref{appendix} there exists a real analytic symplectic transformation $ z = {\phi ^0}\left( \zeta  \right) $ of the form \eqref{bhxs} (with $ \nu=0 $) mapping the strip $ \left| {\operatorname{Im} \xi } \right|,\left| \eta  \right| \leq  {r_1}=r_0/2 $ into $ \left| {\operatorname{Im} x} \right|,\left| y \right| \leq \theta{r_0}=r_0/\sqrt{2} $,
such that $ {u^0}\left( \xi  \right) - \xi  $ and $ {v^0}\left( \xi  \right) $ are of period $ 1 $ in all variables and the Hamiltonian function $ {K^0}: = {H^0} \circ {\phi ^0} $  satisfies \eqref{qiudao} (with $ \nu=0 $). Moreover, \eqref{2.82}-\eqref{2.84}  (with $ \nu=0 $) hold.

Also assume that
\begin{equation}\notag
	\left| {K_{\eta \eta }^{\nu  - 1}\left( \zeta  \right)} \right| \leq {M_{\nu  - 1}},\;\;\left| {{{\left( {\int_{{\mathbb{T}^n}} {K_{\eta \eta }^{\nu  - 1}\left( {\xi ,0} \right)d\xi } } \right)}^{ - 1}}} \right| \leq {M_{\nu  - 1}},\;\;{M_\nu } \leq M
\end{equation}
for $ \left| {\operatorname{Im} x} \right| ,\left| y \right| \leq {r_\nu } $. Finally, define
\[\tilde H\left( {x,y} \right): = {H^\nu } \circ {\phi ^{\nu  - 1}}\left( {x,y} \right)\]
with respect to $ \left| {\operatorname{Im} x} \right| ,\left| y \right| \leq {r_\nu } $. One can verify that $ {\tilde H} $ is well defined.

Next we assume that the transformation $ z = {\phi ^{\nu  - 1}}\left( \zeta  \right) $ of the form \eqref{bhxs} has been constructed, mapping $ \left| {\operatorname{Im} \xi } \right|,\left| \eta  \right| \leq \theta {r_\nu } $ into $ \left| {\operatorname{Im} x} \right|,\left| {\operatorname{Im} y} \right| \leq {r_{\nu  - 1}},\left| {\operatorname{Re} y} \right| \leq \rho  $, and $ {u^{\nu  - 1}}\left( \xi  \right) - \xi ,{v^{\nu  - 1}}\left( \xi  \right) $ are of period $ 1 $ in all variables, and $ K_\xi ^{\nu  - 1}\left( {\xi ,0} \right) = 0,K_\eta ^{\nu  - 1}\left( {\xi ,0} \right) = \omega  $. In addition, we also assume that \eqref{2.82}-\eqref{2.85} hold for $ 0, \ldots ,\nu  - 1 $. In the next Step 3, we will verify that the above still hold for $ \nu $, which establishes a complete induction.
\\
{\textbf{Step3:}} We will prove the existence of transformation $ {\phi ^\nu } $ in each step according to the specific estimates below and  \cref{appendix}.

Let $ \left| {\operatorname{Im} x} \right| \leq \theta {r_\nu } $. Then $ \phi^{\nu-1}(x,0) $ lies in the region where the estimates in \eqref{T1-3} hold for both $ H^\nu $ and $ H^{\nu-1} $. Note that $ x \mapsto H^{\nu-1}(\phi^{\nu-1}(x,0)) $ is constant by \eqref{qiudao}. Then by \eqref{T1-3}, we arrive at the following for $ \left| {\operatorname{Im} x} \right| \leq \theta {r_\nu } $
\begin{align*}
	\left| {\tilde H\left( {x,0} \right) - \int_{{\mathbb{T}^n}} {\tilde H\left( {\xi ,0} \right)d\xi } } \right| &\leq 2\mathop {\sup }\limits_{\left| {\operatorname{Im} \xi } \right| \leq \theta {r_\nu }} \left| {{H^\nu }\left( {{\phi ^{\nu  - 1}}\left( {\xi ,0} \right)} \right) - {H^{\nu  - 1}}\left( {{\phi ^{\nu  - 1}}\left( {\xi ,0} \right)} \right)} \right|\notag \\
	&\leq 2{c_1}{\left\| H \right\|_\varpi }r_\nu ^k\varpi \left( {{r_\nu }} \right) + 2{c_1}{\left\| H \right\|_\varpi }r_{\nu-1} ^{k}\varpi \left( {{r_{\nu-1} }} \right)\notag \\
	&\leq cr_\nu ^{k - 2\tau  - 2}\varpi \left( {{r_\nu }} \right) \cdot r_\nu ^{2\tau  + 2},
\end{align*}
where the weak homogeneity of $ \varpi $ with respect to $ a=1/2 $ (see \cref{weak}) has been used in the last inequality, because $ \varpi(r_{\nu-1})=\varpi(2r_{\nu})\leq c \varpi(r_{\nu}) $ (thus $ c $ is independent of $ \nu $). For convenience we may therefore not mention it in the following.

Taking $ \eta=0 $ in \eqref{2.83} we have
\begin{align}
	\left| {u_\xi ^{\nu  - 1}\left( \xi  \right) - \mathbb{I}} \right| &\leq \sum\limits_{\mu  = 0}^{\nu  - 1} {\left| {u_\xi ^\mu \left( \xi  \right) - u_\xi ^{\mu  - 1}\left( \xi  \right)} \right|}  \leq c\sum\limits_{\mu  = 0}^{\nu  - 1} {r_\mu ^{k - 2\tau  - 2}\varpi \left( {{r_\mu }} \right)}  \notag \\
	&\leq c\sum\limits_{\mu  = 0}^\infty  {{{\left( {\frac{\varepsilon }{{{2^\mu }}}} \right)}^{k - 2\tau  - 2}}\varpi \left( {\frac{\varepsilon }{{{2^\mu }}}} \right)}  \leq c\sum\limits_{\mu  = 0}^\infty  {\left( {\frac{\varepsilon }{{{2^{\mu  - 1}}}} - \frac{\varepsilon }{{{2^\mu }}}} \right){{\left( {\frac{\varepsilon }{{{2^\mu }}}} \right)}^{k - 2\tau  - 3}}\varpi \left( {\frac{\varepsilon }{{{2^\mu }}}} \right)}  \notag \\
\label{dao}	&   \leq c\sum\limits_{\mu  = 0}^\infty  {\int_{\varepsilon /{2^\mu }}^{\varepsilon /{2^{\mu  - 1}}} {\frac{{\varpi \left( x \right)}}{{{x^{2\tau  + 3 - k}}}}dx} }  \leq c\int_0^{2\varepsilon } {\frac{{\varpi \left( x \right)}}{{{x^{2\tau  + 3 - k}}}}dx}  \leq 1 - \theta
\end{align}
for $ \left| {\operatorname{Im} \xi } \right| \leq \theta {r_\nu } $, and the Dini type condition \eqref{Dini} in \textbf{(H1)} together with Cauchy Theorem are used since $ \varepsilon>0 $ is sufficiently small. Then it leads to
\begin{equation}\label{nidao}
	\left| {u_\xi ^{\nu  - 1}{{\left( \xi  \right)}^{ - 1}}} \right| \leq {\theta ^{ - 1}},\;\;\left| {\operatorname{Im} \xi } \right| \leq \theta {r_\nu }.
\end{equation}

Finally, by \eqref{nidao} and \eqref{T1-3} we obtain that
\begin{align*}
	\left| {{{\tilde H}_y}\left( {x,0} \right) - \omega } \right| &= \left| {u_\xi ^{\nu  - 1}{{\left( x \right)}^{ - 1}}\left( {H_y^\nu \left( {{\phi ^{\nu  - 1}}\left( {x,0} \right)} \right) - H_y^{\nu  - 1}\left( {{\phi ^{\nu  - 1}}\left( {x,0} \right)} \right)} \right)} \right|\notag \\
	& \leq {\theta ^{ - 1}}\left| {H_y^\nu \left( {{\phi ^{\nu  - 1}}\left( {x,0} \right)} \right) - H_y^{\nu  - 1}\left( {{\phi ^{\nu  - 1}}\left( {x,0} \right)} \right)} \right|\notag \\
	& \leq {\theta ^{ - 1}}\left( {{c_1}{{\left\| H \right\|}_\varpi }r_\nu ^{k - 1}\varpi \left( {{r_\nu }} \right) + {c_1}{{\left\| H \right\|}_\varpi }r_{\nu  - 1}^{k - 1}\varpi \left( {{r_{\nu  - 1}}} \right)} \right)\notag \\
	&\leq cr_\nu ^{k - 1}\varpi \left( {{r_\nu }} \right)\notag \\
	&\leq cr_\nu ^{k - \tau  - 2}\varpi \left( {{r_\nu }} \right) \cdot r_\nu ^{\tau  + 1},
\end{align*}
and
\begin{align*}
	\left| {{{\tilde H}_{yy}}\left( z \right) - {Q^\nu }\left( z \right)} \right| &= \left| {u_\xi ^{\nu  - 1}{{\left( x \right)}^{ - 1}}\left( {H_{yy}^\nu \left( {{\phi ^{\nu  - 1}}\left( z \right)} \right) - H_{yy}^{\nu  - 1}\left( {{\phi ^{\nu  - 1}}\left( z \right)} \right)} \right){{\left( {u_\xi ^{\nu  - 1}{{\left( x \right)}^{ - 1}}} \right)}^T}} \right|\notag \\
	&\leq {\theta ^{ - 2}}\left| {H_{yy}^\nu \left( {{\phi ^{\nu  - 1}}\left( z \right)} \right) - H_{yy}^{\nu  - 1}\left( {{\phi ^{\nu  - 1}}\left( z \right)} \right)} \right|\notag \\
	&\leq {\theta ^{ - 2}}\left( {{c_1}{{\left\| H \right\|}_\varpi }r_\nu ^{k - 2}\varpi \left( {{r_\nu }} \right) + {c_1}{{\left\| H \right\|}_\varpi }r_{\nu  - 1}^{k - 2}\varpi \left( {{r_{\nu  - 1}}} \right)} \right)\notag \\
	&\leq cr_\nu ^{k - 2\tau  - 2}\varpi \left( {{r_\nu }} \right)/2M
\end{align*}
for $ \left| {\operatorname{Im} x} \right|,\left| y \right| \leq \theta {r_\nu } $. Then denote $ r^*:= r_\nu $ and $ \delta^*:=c r_\nu ^{k - 2\tau  - 2}\varpi \left( {{r_\nu }} \right) $ in   \cref{appendix}, we obtain the analytic symplectic preserving transformation $ {\phi ^\nu } $ of each step, mapping the strip $ \left| {\operatorname{Im} \xi } \right|\leq \theta {r_\nu },\left| \eta  \right| \leq \theta {r_\nu } $ into $ \left| {\operatorname{Im} x} \right|\leq {r_\nu },\left| y \right| \leq {r_\nu } $, such that $ {u^\nu }\left( \xi  \right) - \xi $ and $ {v^\nu }\left( \xi  \right) $ are of period $ 1 $ in all variables, and the transformed Hamiltonian function $ {K^\nu } = {H^\nu } \circ {\phi ^\nu } $ satisfies
\[K_\xi ^\nu \left( {\xi ,0} \right) = 0,\;\;K_\eta ^\nu \left( {\xi ,0} \right) = \omega .\]
Moreover, \eqref{2.82}-\eqref{2.85} are valid for $ \left| {\operatorname{Im} \xi } \right|,\left| \eta  \right|,\left| {\operatorname{Im} x} \right|\leq \theta {r_\nu }$.
\\
{\textbf{Step4:}} By  \eqref{2.83} for $ 0, \ldots ,\nu  - 1 $ and the arguments in \eqref{dao}, there holds
\begin{align}
	\left| {\phi _\zeta ^{\nu  - 1}\left( \zeta  \right)} \right| &\leq 1 + \sum\limits_{\mu  = 0}^{\nu  - 1} {\left| {\phi _\zeta ^\mu \left( \zeta  \right) - \phi _\zeta ^{\mu  - 1}\left( \zeta  \right)} \right|}  \leq 1 + \sum\limits_{\mu  = 0}^{\nu  - 1} {\left( {\left| {\phi _\zeta ^\mu \left( \zeta  \right) - \mathbb{I}} \right| + \left| {\phi _\zeta ^{\mu  - 1}\left( \zeta  \right) - \mathbb{I}} \right|} \right)} \notag \\
\label{dao2}	&\leq 1 + c\sum\limits_{\mu  = 0}^\infty  {{{\left( {\frac{\varepsilon }{{{2^\mu }}}} \right)}^{k - 2\tau  - 2}}\varpi \left( {\frac{\varepsilon }{{{2^\mu }}}} \right)}   \leq 1 + c\int_0^{2\varepsilon } {\frac{{\varpi \left( x \right)}}{x^{2\tau +3-k}}dx}   \leq 2
\end{align}
for $ \left| {\operatorname{Im} \xi } \right|,\left| \eta  \right| \leq \theta {r_\nu } $ as long as $ \varepsilon>0 $ is sufficiently small, which leads to
\begin{align*}
	\left| {{\phi ^\nu }\left( \zeta  \right) - {\phi ^{\nu  - 1}}\left( \zeta  \right)} \right| &= \left| {{\phi ^{\nu  - 1}}\left( {{\psi ^\nu }\left( \zeta  \right)} \right) - {\phi ^{\nu  - 1}}\left( \zeta  \right)} \right| \notag \\
	&\leq 2\left| {{\psi ^\nu }\left( \zeta  \right) - \zeta } \right| \leq c\left( {1 - \theta } \right)r_\nu ^{k - 2\tau  - 1}\varpi \left( {{r_\nu }} \right)
\end{align*}
for $ \left| {\operatorname{Im} \xi } \right|,\left| \eta  \right| \leq {r_{\nu  + 1}} $. Then by Cauchy's estimate, we obtain that
\begin{equation}\notag
	\left| {\phi _\zeta ^\nu \left( \zeta  \right) - \phi _\zeta ^{\nu  - 1}\left( \zeta  \right)} \right| \leq cr_\nu ^{k - 2\tau  - 2}\varpi \left( {{r_\nu }} \right),\;\;\left| {\operatorname{Im} \xi } \right|,\left| \eta  \right| \leq {r_{\nu  + 1}}.
\end{equation}
It can be proved in the same way that $ | {\phi _\zeta ^\nu \left( \zeta  \right)} | \leq 2 $ for $ \left| {\operatorname{Im} \xi } \right|,\left| \eta  \right| \leq \theta {r_{\nu  + 1}} $, which implies
\begin{equation}\notag
	\left| {\operatorname{Im} z} \right| \leq 2\left| {\operatorname{Im} \zeta } \right| \leq 2\sqrt {{{\left| {\operatorname{Im} \xi } \right|}^2} + {{\left| {\operatorname{Im} \eta } \right|}^2}}  \leq 2\sqrt {{\theta ^2}r_{\nu  + 1}^2 + {\theta ^2}r_{\nu  + 1}^2}  = 2{r_{\nu  + 1}} = {r_\nu }.
\end{equation}
Besides, we have $ \left| {\operatorname{Re} y} \right| \leq \rho  $.

Note that
\begin{equation}\notag
	{v^\nu } \circ {\left( {{u^\nu }} \right)^{ - 1}}\left( x \right) - {v^{\nu  - 1}} \circ {\left( {{u^{\nu  - 1}}} \right)^{ - 1}}\left( x \right) = {\left( {u_\xi ^{\nu  - 1}{{\left( \xi  \right)}^{ - 1}}} \right)^T}U_x^\nu \left( \xi  \right),\;\;x: = {u^{\nu  - 1}}\left( \xi  \right).
\end{equation}
Recall \eqref{dao}, by employing the contraction mapping principle we have $ \left| {\operatorname{Im} \xi } \right| \leq {r_{\nu  + 1}} $ if $ \left| {\operatorname{Im} x} \right| \leq \theta {r_{\nu  + 1}} $ with respect to $ x $ defined above. Then from \eqref{2.85} and \eqref{nidao} one can verify that
\begin{equation}\label{4.97}
	\left| {{{\big( {u_\xi ^{\nu  - 1}{{\left( \xi  \right)}^{ - 1}}} \big)}^T}U_x^\nu \left( \xi  \right)} \right| \leq cr_\nu ^{k - \tau  - 1}\varpi \left( {{r_\nu }} \right).
\end{equation}
{\textbf{Step5:}} Finally, we are in a position  to  prove the convergence of $ u^\nu $ and $ v^\nu $, and the regularity of their limit functions. Note \eqref{4.97}. Then we have the following analytic iterative scheme
\begin{equation}\label{4.98}
	\left| {{u^\nu }\left( \xi  \right) - {u^{\nu  - 1}}\left( \xi  \right)} \right| \leq cr_\nu ^{k - 2\tau  - 1}\varpi \left( {{r_\nu }} \right), \;\; \left| {\operatorname{Im} \xi } \right| \leq {r_{\nu  + 1}},
\end{equation}
and
\begin{equation}\label{4.99}
	\left| {{v^\nu } \circ {{\left( {{u^\nu }} \right)}^{ - 1}}\left( x \right) - {v^{\nu  - 1}} \circ {{\left( {{u^{\nu  - 1}}} \right)}^{ - 1}}\left( x \right)} \right| \leq c r_\nu ^{k - \tau  - 1}\varpi \left( {{r_\nu }} \right),\;\;\left| {\operatorname{Im} x} \right| \leq \theta {r_{\nu  + 1}}.
\end{equation}
And especially, \eqref{4.98} and \eqref{4.99} hold when $ \nu=0 $ since $ {u^{0 - 1}} = \mathrm{id} $ and $ {v^{ 0- 1}} = 0 $. It is obvious to see that the uniform limits $ u $ and $ v\circ u^{-1} $  of  $ u^\nu $ and $ v^\nu\circ (u^\nu)^{-1} $ are at least $ C^1 $ (in fact, this is implied by the higher regularity studied later in \cref{furtherregularity}). In addition, the persistent invariant torus possesses the same frequency $ \omega $ as the unperturbed torus by \eqref{qiudao}.

\subsection{Iteration theorem on regularity without H\"older's type}\label{furtherregularity}
To obtain accurate regularity for $ u $ and $ v\circ u^{-1} $ from the analytic iterative scheme \eqref{4.98} and \eqref{4.99},  we shall along with the idea of Moser and Salamon to establish an abstract iterative theorem, which provides the  modulus of continuity of the integral form.
\begin{theorem}\label{t1}
	Let $ n\in \mathbb{N}^+, \varepsilon>0 $ and $ \{r_\nu\}_{\nu \in \mathbb
	N}=\{\varepsilon2^{-\nu}\}_{n \in \mathbb{N}} $ be given, and denote by $ f:{\mathbb{R}^n} \to \mathbb{R} $ the limit of a sequence of real analytic functions $ {f_\nu }\left( x \right) $ in the strips $ \left| {\operatorname{Im} x} \right| \leq {r_{\nu} } $ such that
	\begin{equation}\label{huoche}
		{f_0} = 0,\;\;\left| {{f_\nu }\left( x \right) - {f_{\nu  - 1}}\left( x \right)} \right| \leq \varphi \left( {{r_\nu }} \right),\;\;\nu  \geq 1,
	\end{equation}
	where $ \varphi $ is a nondecreasing continuous function satisfying $  \varphi \left( 0 \right) = 0  $.		Assume that there is a critical $ k_* \in \mathbb{N} $ such that
	\begin{equation}\label{330}
	\int_0^1 {\frac{{\varphi \left( x \right)}}{{{x^{{k_*} + 1}}}}dx}  <  + \infty ,\;\;\int_0^1 {\frac{{\varphi \left( x \right)}}{{{x^{{k_*} + 2}}}}dx}  =  + \infty .
	\end{equation}
	Then there exists a modulus of continuity $ \varpi_*  $  such that $ f \in {C_{k_*,\varpi_* }}\left( {{\mathbb{R}^n}} \right) $.  In other words, the regularity of $ f $ is at least of $ C^{k_*} $ plus $ \varpi_* $. In particular, $ \varpi_* $ could be determined as
	\begin{equation}\label{LLL}
	{\varpi _ * }\left( \gamma \right) \sim	\gamma \int_{L\left( \gamma  \right)}^\varepsilon  {\frac{{\varphi \left( t \right)}}{{{t^{{k_*} + 2}}}}dt}  = {\mathcal{O}^\# }\left( {\int_0^{L\left( \gamma  \right)} {\frac{{\varphi \left( t \right)}}{{{t^{{k_*} + 1}}}}dt} } \right) ,\;\;\gamma  \to {0^ + },
	\end{equation}
	where $ L(\gamma) \to 0^+ $ is some function such that the second relation in \eqref{LLL} holds.
\end{theorem}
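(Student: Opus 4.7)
\medskip

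\noindent\textbf{Proof plan for Theorem~\ref{t1}.}

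The plan is to recover $\partial^\alpha f$ for $|\alpha|\le k_*$ as an absolutely convergent telescoping series built from the $\partial^\alpha(f_\nu-f_{\nu-1})$, and then to obtain the modulus of continuity of the top-order derivatives by a dyadic splitting in the spirit of Moser--Salamon. Since $f_\nu-f_{\nu-1}$ is analytic on $|\mathrm{Im}\,x|\le r_\nu$ with sup-norm at most $\varphi(r_\nu)$, Cauchy's estimate on the concentric strip of width $r_{\nu+1}=r_\nu/2$ gives, for every multi-index $\beta$,
\[
|\partial^\beta(f_\nu-f_{\nu-1})(x)|\le C_\beta\,\varphi(r_\nu)/r_\nu^{|\beta|},\qquad |\mathrm{Im}\,x|\le r_{\nu+1}.
\]
Using $f=\sum_{\nu\ge 1}(f_\nu-f_{\nu-1})$ (since $f_0=0$), for $|\alpha|\le k_*$ the first condition in \eqref{330} combined with $r_\nu=\varepsilon 2^{-\nu}$ and the integral test yields absolute uniform convergence of $\sum_\nu\partial^\alpha(f_\nu-f_{\nu-1})$ on $\mathbb{R}^n$; this identifies $\partial^\alpha f$ and already shows $f\in C^{k_*}(\mathbb{R}^n)$.

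To estimate the continuity modulus of $\partial^\alpha f$ for $|\alpha|=k_*$ I would fix $x,y\in\mathbb{R}^n$ with $\gamma:=|x-y|$ small, choose a cutoff $N=N(\gamma)$, and split the telescoping sum into a head $\sum_{\nu\le N}$ and a tail $\sum_{\nu>N}$. On the head I apply the mean value inequality together with one further Cauchy derivative, bounding the $\nu$-th term by $C\gamma\,\varphi(r_\nu)/r_\nu^{k_*+1}$; summing and comparing with an integral on the dyadic grid $\{r_\nu\}$ gives
\[
\text{head}\le C\,\gamma\int_{r_N}^{\varepsilon}\frac{\varphi(t)}{t^{k_*+2}}\,dt.
\]
On the tail I use only the uniform bound $|\partial^\alpha(f_\nu-f_{\nu-1})|\le C\varphi(r_\nu)/r_\nu^{k_*}$, which after the same dyadic integral comparison produces
\[
\text{tail}\le C\int_0^{r_N}\frac{\varphi(t)}{t^{k_*+1}}\,dt,
\]
and this tends to $0$ as $N\to\infty$ by the first condition in \eqref{330}.

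Finally I would choose $N(\gamma)$, equivalently $L(\gamma):=r_{N(\gamma)}$, so as to balance the head and the tail. Such a choice exists because the head factor $\int_{L}^{\varepsilon}\varphi/t^{k_*+2}\,dt$ increases as $L\to 0^+$ while the tail integral decreases, and because the second condition in \eqref{330} forces the head factor to diverge, guaranteeing $L(\gamma)\to 0^+$ as $\gamma\to 0^+$. At the balance point the two quantities are comparable up to absolute constants, giving exactly the $\mathcal{O}^\#$ relation in \eqref{LLL}; the common value $\varpi_*(\gamma)$ is then monotone, continuous and vanishes at $0$, while the subsidiary requirement $\overline{\lim}_{\gamma\to 0^+}\gamma/\varpi_*(\gamma)<+\infty$ from \cref{d1} follows because the head factor $\int_L^\varepsilon\varphi/t^{k_*+2}\,dt$ diverges as $L\to 0^+$.

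The main obstacle is not the individual Cauchy and splitting estimates, which are routine, but the careful selection of $L(\gamma)$ and the rigorous verification that the resulting $\varpi_*$ is a bona fide modulus of continuity in the sense of \cref{d1}. This requires playing the divergent integral in \eqref{330} against the convergent one, and the critical nature of the exponent $k_*$ is essential: if one tries $k_*+1$ in place of $k_*$ the head diverges uniformly and no useful $\varpi_*$ is produced, whereas with the present $k_*$ both the balance and the qualitative properties of $\varpi_*$ fall into place automatically.
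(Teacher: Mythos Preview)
Your proposal is correct and follows essentially the same route as the paper: define $g_\nu=f_\nu-f_{\nu-1}$, apply Cauchy's estimate to get $|\partial^\alpha g_\nu|\le C\varphi(r_\nu)/r_\nu^{|\alpha|}$, split $\sum_\nu$ at a cutoff $N(\gamma)$ into a head bounded via the mean value inequality plus one extra derivative and a tail bounded by the sup-norm, convert both dyadic sums into the integrals $\gamma\int_{L(\gamma)}^\varepsilon\varphi/t^{k_*+2}\,dt$ and $\int_0^{L(\gamma)}\varphi/t^{k_*+1}\,dt$, and balance them using \eqref{330}. The paper's argument is exactly this, with only cosmetic differences in the definition of $L(\gamma)$ (they set $L(\gamma)=\varepsilon 2^{-\widetilde N(\gamma)-1}$) and in invoking the intermediate value theorem for the balancing; your additional remarks on verifying that $\varpi_*$ satisfies the conditions of Definition~\ref{d1} are a welcome elaboration that the paper leaves implicit.
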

%\begin{remark}
%The previous requirement that the modulus of continuity $ \varpi $ is not Lipschitz's type  results in the fact that $ \varphi $ is not an integral power function. But actually we can still replace Lipschitz with the slightly weaker H\"older to obtain the regularity here.
%\end{remark}
\begin{proof}
Define $ {g_\nu }(x): = {f_\nu }\left( x \right) - {f_{\nu  - 1}}\left( x \right) $ for $ \nu \in \mathbb{N}^+ $.	Determine an integer function $ \widetilde N(\gamma) : [0,1] \to \mathbb{N}^+ $ (note that $ \widetilde N(\gamma) $ can be extended to $ \mathbb{R}^+ $ due to the arguments below, we thus assume that  it is  a continuous function). Then for the given critical  $ k_*\in \mathbb{N} $ and $ x,y\in \mathbb{R}^n $, we obtain  the following  for all multi-indices $ \alpha  = \left( {{\alpha _1}, \ldots ,{\alpha _n}} \right) \in {\mathbb{N}^n} $ with $ \left| \alpha  \right| = k_* $:
\begin{align}
\sum\limits_{\nu  = 1}^{\widetilde N\left( {\left| {x - y} \right|} \right)-1} {\left| {{\partial ^\alpha }{g_\nu }\left( x \right) - {\partial ^\alpha }{g_\nu }\left( y \right)} \right|}&\leq \left| {x - y} \right|\sum\limits_{\nu  = 1}^{\widetilde N\left( {\left| {x - y} \right|} \right)-1} {{{\left| {{\partial ^\alpha }{g_{\nu x}}} \right|}_{{C^0}(\mathbb{R}^n)}}}\leq \left| {x - y} \right|\sum\limits_{\nu  = 1}^{\widetilde N\left( {\left| {x - y} \right|} \right)-1} {\frac{1}{{r_\nu ^{k_* + 1}}}\varphi \left( {{r_\nu }} \right)} \notag \\
& = 2\left| {x - y} \right|\sum\limits_{\nu  = 1}^{\widetilde N\left( {\left| {x - y} \right|} \right)-1} {\left( {\frac{\varepsilon }{{{2^\nu }}} - \frac{\varepsilon }{{{2^{\nu  + 1}}}}} \right){{\left( {\frac{{{2^\nu }}}{\varepsilon }} \right)}^{{k_ * } + 2}}\varphi \left( {\frac{\varepsilon }{{{2^\nu }}}} \right)}\notag  \\
\label{zhengze1}&\leq  c\left| {x - y} \right|\int_{\varepsilon {2^{ - \widetilde N\left( {\left| {x - y} \right|} \right) }}}^\varepsilon  {\frac{{\varphi \left( t \right)}}{{{t^{{k_*} + 2}}}}dt} ,
\end{align}
where Cauchy's estimate and \eqref{huoche} are used in the second inequality, and arguments similar  to  \eqref{dao} are employed in \eqref{zhengze1}, $ c>0 $ is a universal constant. Besides, we similarly get
\begin{align}
\sum\limits_{\nu  = \widetilde N\left( {\left| {x - y} \right|} \right) }^\infty  {\left| {{\partial ^\alpha }{g_\nu }\left( x \right) - {\partial ^\alpha }{g_\nu }\left( y \right)} \right|} &\leq \sum\limits_{\nu  = \widetilde N\left( {\left| {x - y} \right|} \right) }^\infty  {2{{\left| {{\partial ^\alpha }{g_\nu }} \right|}_{{C^0}(\mathbb{R}^n)}}}\leq 2\sum\limits_{\nu  = \widetilde N\left( {\left| {x - y} \right|} \right)}^\infty  {\frac{1}{{r_\nu ^{k_*}}}\varphi \left( {{r_\nu }} \right)}\notag \\
&=2\sum\limits_{\nu  = \widetilde N\left( {\left| {x - y} \right|} \right)}^\infty  {\left( {\frac{\varepsilon }{{{2^\nu }}} - \frac{\varepsilon }{{{2^{\nu  + 1}}}}} \right){{\left( {\frac{{{2^\nu }}}{\varepsilon }} \right)}^{{k_ * } + 1}}\varphi \left( {\frac{\varepsilon }{{{2^\nu }}}} \right)}\notag \\
\label{zhengze2}& \leq c\int_0^{\varepsilon {2^{ - \widetilde N\left( {\left| {x - y} \right|} \right) }}} {\frac{{\varphi \left(t \right)}}{{{t^{{k_*} + 1}}}}dt} .
\end{align}
Now choose $ \widetilde{N}(\gamma) \to +\infty $ as $ \gamma \to 0^+ $ such that
\begin{equation}\label{varpi*}
\gamma \int_{L\left( \gamma  \right)}^\varepsilon  {\frac{{\varphi \left( t \right)}}{{{t^{{k_*} + 2}}}}dt}  = {\mathcal{O}^\# }\left( {\int_0^{L\left( \gamma  \right)} {\frac{{\varphi \left( t \right)}}{{{t^{{k_*} + 1}}}}dt} } \right): = {\varpi _ * }\left( \gamma \right),\;\;\gamma  \to {0^ + },
\end{equation}
where $ \varepsilon {2^{ - \tilde N\left( \gamma  \right) - 1}}: = L\left( \gamma  \right)\to 0^+ $. This is achievable due to assumption \eqref{330}, Cauchy Theorem  and The Intermediate Value Theorem. Note that the choice of $ L(\gamma) $ (i.e., $ \widetilde{N} $) and $ \varpi_* $ is not unique (may up to a constant), and $ \varpi_* $ could be continuously extended to some given interval (e.g., $ [0,1] $), but this does not affect the qualitative result. Combining \eqref{zhengze1}, \eqref{zhengze2} and \eqref{varpi*} we finally arrive at $ f \in {C_{k_*,\varpi_* }}\left( {{\mathbb{R}^n}} \right) $ because
 \begin{align*}
 \left| {{\partial ^\alpha }f\left( x \right) - {\partial ^\alpha }f\left( y \right)} \right| &\leq \sum\limits_{\nu  = 1}^{\widetilde N\left( {\left| {x - y} \right|} \right)} { + \sum\limits_{\nu  = \widetilde N\left( {\left| {x - y} \right|} \right) + 1}^\infty  {\left| {{\partial ^\alpha }{g_\nu }\left( x \right) - {\partial ^\alpha }{g_\nu }\left( y \right)} \right|} } \\
 & \leq c\left( {\left| {x - y} \right|\int_{\varepsilon {2^{ - \widetilde N\left( {\left| {x - y} \right|} \right) - 1}}}^\varepsilon  {\frac{{\varphi \left( t \right)}}{{{t^{{k_*} + 2}}}}dt}  + \int_0^{\varepsilon {2^{ - \widetilde N\left( {\left| {x - y} \right|} \right) - 1}}} {\frac{{\varphi \left( t \right)}}{{{t^{{k_*} + 1}}}}dt} } \right) \\
 &\leq c\varpi_* \left( {\left| {x - y} \right|} \right).
 \end{align*}

\end{proof}

\cref{t1} can be extended to the case $ f:{\mathbb{R}^n} \to \mathbb{R}^m $ with $ n,m \in \mathbb{N}^+ $ since the analysis is completely the same,  and the strip $ \left| {\operatorname{Im} x} \right| \leq {r_\nu } $ can also be replaced by $ \left| {\operatorname{Im} x} \right| \leq {r_{\nu+1} } $ (or $ \leq \theta r_{\nu+1} $).  \cref{t1} can also be used to estimate the regularity of solutions of finite smooth homological equations, thus KAM uniqueness theorems in some cases might be derived, see Section 4 in \cite{salamon} for instance. However, in order to avoid too much content in this paper, it is omitted here.

Recall \eqref{4.98} and \eqref{4.99}. Then one can apply  \cref{t1} on $ \{u^{\nu}-\mathrm{id}\}_\nu $ (because \cref{t1} requires that the initial value vanishes) and $ \{v^{\nu}\circ (u^\nu)^{-1}\}_\nu $ to directly analyze the regularity of the KAM torus  according to \textbf{(H5)}, i.e., there exist $ {\varpi _i} $ ($ i=1,2 $) such that $ u \in {C_{k_1^ * ,{\varpi _1}}}\left( {{\mathbb{R}^n},{\mathbb{R}^n}} \right) $ and $ v \circ {u^{ - 1}} \in {C_{k_2^ * ,{\varpi _2}}}\left( {{\mathbb{R}^n},G} \right) $. This completes the proof of \cref{theorem1}.

\section{Proof of \cref{simichaos2}}\label{proofsimichaos2}
	Only need to determine $ k_i^* $ in \textbf{(H5)} and choose functions $ L_i(\gamma)\to 0^+ $ (as $ \gamma \to 0^+ $) to obtain the modulus of continuity $ \varpi_i $ in \eqref{varpii} for $ i=1,2 $. Obviously $ k_1^*=1 $ and $ k_2^*=n $ because
	\[\int_0^1 {\frac{{\varpi _{\mathrm{GLH}}^{\varrho ,\lambda }\left( x \right)}}{x}dx}  <  + \infty ,\;\;\int_0^1 {\frac{{\varpi _{\mathrm{GLH}}^{\varrho ,\lambda }\left( x \right)}}{{{x^2}}}dx}  =  + \infty .\]
	In view of $ \varphi_i(x) $  in \textbf{(H5)}, then by applying \cref{duochongduishu} we get
	\begin{align}
		\gamma \int_{{L_i}\left( \gamma  \right)}^\varepsilon  {\frac{{{\varphi _i}\left( t \right)}}{{{t^{k_i^ *  + 2}}}}dt}  &= {\mathcal{O}^\# }\Bigg( {\gamma \int_{{L_i}\left( \gamma  \right)}^\varepsilon  {\frac{1}{{{t^2}(\ln (1/t)) \cdots {{(\underbrace {\ln  \cdots \ln }_\varrho (1/t))}^\lambda }}}dt} } \Bigg)\notag \\
		& = {\mathcal{O}^\# }\Bigg( {\gamma \int_{1/\varepsilon }^{1/{L_i}\left( \gamma  \right)} {\frac{1}{{(\ln z) \cdots {{(\underbrace {\ln  \cdots \ln }_\varrho z)}^\lambda }}}dz} } \Bigg)\notag \\
		\label{guji1}& = {\mathcal{O}^\# }\Bigg( {\frac{\gamma }{{{L_i}\left( \gamma  \right)(\ln (1/{L_i}\left( \gamma  \right)) \cdots {{(\underbrace {\ln  \cdots \ln }_\varrho (1/{L_i}\left( \gamma  \right)))}^\lambda }}}} \Bigg),
	\end{align}
	and by direct calculation one arrives at
	\begin{align}
		\int_0^{{L_i}\left( \gamma  \right)} {\frac{{{\varphi _i}\left( t \right)}}{{{t^{k_i^ *  + 1}}}}dt} & = {\mathcal{O}^\# }\Bigg( {\int_{{L_i}\left( \gamma  \right)}^\varepsilon  {\frac{1}{{t(\ln (1/t)) \cdots {{(\underbrace {\ln  \cdots \ln }_\varrho (1/t))}^\lambda }}}dt} } \Bigg)\notag \\
		\label{guji2}&= {\mathcal{O}^\# }\Bigg( {\frac{1}{{{{(\underbrace {\ln  \cdots \ln }_\varrho (1/{L_i}\left( \gamma  \right)))}^{\lambda  - 1}}}}} \Bigg).
	\end{align}
	Finally, choosing
	\begin{equation}\label{Lit4}
		{L_i}\left( \gamma  \right) \sim \frac{{\gamma }}{{(\ln (1/\gamma)) \cdots (\underbrace {\ln  \cdots \ln }_{\varrho}(1/\gamma )})} \to {0^ + },\;\;\gamma  \to {0^ + }
	\end{equation}
	will lead to the second relation in \eqref{varpii} for $ i=1,2 $, and substituting $ L_i(\gamma) $ into \eqref{guji1} or \eqref{guji2} yields that
\begin{equation}\label{rho1}
	{\varpi _1}\left( \gamma \right) \sim {\varpi _2}\left( \gamma \right) \sim \frac{1}{{{{(\underbrace {\ln  \cdots \ln }_\varrho (1/\gamma))}^{\lambda  - 1}}}}.
\end{equation}
	in \cref{theorem1}, see \eqref{varpii}. This proves \cref{simichaos2}.

\section{Proof of  \cref{Holder}}\label{8}
Note that $ \ell  \notin {\mathbb{N}^ + } $ implies $ \{\ell\}\in (0,1) $. Then $ k=[\ell] $ and $ \varpi(x)\sim \varpi_{\mathrm{H}}^{\ell}(x)\sim x^{\{\ell\}} $, i.e., modulus of continuity of H\"older's type. Consequently, \textbf{(H1)} can be directly verified because of  $ \ell  > 2\tau  + 2 $:
\[\int_0^1 {\frac{{\varpi \left( x \right)}}{{{x^{2\tau  + 3 - k}}}}dx}  = \int_0^1 {\frac{{{x^{\left\{ \ell  \right\}}}}}{{{x^{2\tau  + 3 - \left[ \ell  \right]}}}}dx}  = \int_0^1 {\frac{1}{{{x^{1 - \left( {\ell  - 2\tau  - 2} \right)}}}}dx}  <  + \infty .\]
Here and below, let $ i $ be $ 1 $ or $ 2 $ for simplicity. Recall that $ {\varphi _i}\left( x \right) = {x^{k - \left( {3 - i} \right)\tau  - 1}}\varpi \left( x \right) = {x^{\left[ \ell  \right] - \left( {3 - i} \right)\tau  - 1}} \cdot {x^{\left\{ \ell  \right\}}} = {x^{\ell  - \left( {3 - i} \right)\tau  - 1}} $, and let
\begin{align}
\label{fff}\int_0^1 {\frac{{{\varphi _i}\left( x \right)}}{{{x^{k_i^ *  + 1}}}}dx}  &= \int_0^1 {\frac{1}{{{x^{k_i^ *  - \left( {\ell  - \left( {3 - i} \right)\tau  - 2} \right)}}}}dx}<+\infty,\\
\label{ffff}\int_0^1 {\frac{{{\varphi _i}\left( x \right)}}{{{x^{k_i^ *  + 2}}}}dx}  &= \int_0^1 {\frac{1}{{{x^{k_i^ *  - \left( {\ell  - \left( {3 - i} \right)\tau  - 2} \right) + 1}}}}dx}=+\infty .
\end{align}
Then the critical $ k_i^* $ in \textbf{(H5)} could be uniquely chosen as $ k_i^ * : = \left[ {\ell  - \left( {3 - i} \right)\tau  - 1} \right] \in \mathbb{N}^+$ since $ \ell  - \left( {3 - i} \right)\tau  - 1 \notin \mathbb{N}^+ $. Further, letting $ {L_i}\left( \gamma  \right) = \gamma  \to {0^ + } $ yields that
\[\int_0^{{L_i}\left( \gamma  \right)} {\frac{{{\varphi _i}\left( t \right)}}{{{t^{k_i^ *  + 1}}}}dt}  = {\mathcal{O}^\# }\left( {\int_0^\gamma  {\frac{1}{{{t^{1 - \left\{ {\ell  - \left( {3 - i} \right)\tau  - 2} \right\}}}}}dt} } \right) = {\mathcal{O}^\# }\left( {{\gamma ^{\left\{ {\ell  - \left( {3 - i} \right)\tau  - 2} \right\}}}} \right)\]
and
\[\gamma \int_{{L_i}\left( \gamma  \right)}^\varepsilon  {\frac{{{\varphi _i}\left( t \right)}}{{{t^{k_i^ *  + 2}}}}dt}  = {\mathcal{O}^\# }\left( {\gamma \int_\gamma ^\varepsilon  {\frac{1}{{{t^{2 - \left\{ {\ell  - \left( {3 - i} \right)\tau  - 2} \right\}}}}}dt} } \right) = {\mathcal{O}^\# }\left( {{\gamma ^{\left\{ {\ell  - \left( {3 - i} \right)\tau  - 2} \right\}}}} \right).\]
This leads to H\"older's type
\[{\varpi _i}\left( \gamma  \right) \sim {\left( {{L_i}\left( \gamma  \right)} \right)^{\left\{ {\ell  - \left( {3 - i} \right)\tau  - 2} \right\}}} \sim {\gamma ^{\left\{ {\ell  - \left( {3 - i} \right)\tau  - 2} \right\}}}\sim \varpi_{\mathrm{H}}^{\left\{ {\ell  - \left( {3 - i} \right)\tau  - 2} \right\}}(\gamma)\]
due to \eqref{varpii} in \cref{theorem1}. By observing $ k_i^ *  + \left\{ {\ell  - \left( {3 - i} \right)\tau  - 2} \right\} = \ell  - \left( {3 - i} \right)\tau  - 1 $ we finally arrive at $ u \in {C^{\ell-2\tau-1}}\left( {{\mathbb{R}^n},{\mathbb{R}^n}} \right) $ and $ v \circ {u^{ - 1}} \in {C^{\ell-\tau-1}}\left( {{\mathbb{R}^n},G} \right) $. This proves \cref{Holder}.

\section{Proof of \cref{lognew}}\label{pflognew}
%One only needs to verify \textbf{(H1)} and \textbf{(H5)} to obtain KAM persistence and remaining regularity for KAM torus.
Firstly, note that $ k=[2\tau+2] $ and $ \varpi \left( x \right) \sim {x^{\{2\tau+2\}}}/{\left( { - \ln x} \right)^\lambda } $ with $ \lambda  > 1 $, then \textbf{(H1)} holds since
\begin{align*}
\int_0^1 {\frac{{\varpi \left( x \right)}}{{{x^{2\tau  + 3 - k}}}}dx}  &= {\mathcal{O}^\# }\left( {\int_0^{1/2} {\frac{{{x^{\left\{ {2\tau  + 2} \right\}}}}}{{{x^{2\tau  + 3 - \left[ {2\tau  + 2} \right]}}{{\left( { - \ln x} \right)}^\lambda }}}dx} } \right)\\
& = {\mathcal{O}^\# }\left( {\int_0^{1/2} {\frac{1}{{x{{\left( { - \ln x} \right)}^\lambda }}}dx} } \right) <  + \infty .
\end{align*}
Secondly, in view of $ \varphi_i(x) $ in \textbf{(H5)}, we have
\[\int_0^1 {\frac{{{\varphi _i}\left( x \right)}}{{{x^{k_i^ *  + 1}}}}dx}  = {\mathcal{O}^\# }\left( {\int_0^{1/2} {\frac{1}{{{x^{k_i^ *  - \left( {i - 1} \right)\tau }}{{\left( { - \ln x} \right)}^\lambda }}}dx} } \right)\;\;i=1,2.\]
This leads to critical $ k_1^*=1 $ and $ k_2^*=[\tau+1] $ in \textbf{(H5)}. Here one uses the following fact: for given $ \lambda>1 $,
\[\int_0^{1/2} {\frac{1}{{{x^\iota }{{\left( { - \ln x} \right)}^\lambda }}}dx}  <  + \infty ,\;\;\int_0^{1/2} {\frac{1}{{{x^{\iota  + 1}}{{\left( { - \ln x} \right)}^\lambda }}}dx}=+\infty \]
if and only if $ \iota \in (0,1] $.

Next, we investigate the KAM remaining regularity through certain complicated asymptotic analysis. One notices that the analysis of $ \varpi_1 $ with all $ \tau>n-1 $ and $ \varpi_2 $ with $ n-1<\tau \notin \mathbb{N}^+ $ is the same as $ \varrho =1$ in \cref{simichaos2}, i.e., $ L_1(\gamma) $ and $ L_2(\gamma) $ could be chosen as $ \gamma/(-\ln \gamma) \to 0^+$, see \eqref{Lit4} with $ \varrho =1$.
Therefore, in view of \eqref{rho1}, we arrive at
\[{\varpi _1}\left( \gamma \right) \sim \frac{1}{{{{\left( { - \ln \gamma} \right)}^{\lambda  - 1}}}} \sim \varpi _{\mathrm{LH}}^{\lambda  - 1}\left( \gamma \right), \;\; \tau >n-1,\]
and
\[{\varpi _2}\left( \gamma \right) \sim \frac{1}{{{{\left( { - \ln \gamma} \right)}^{\lambda  - 1}}}} \sim \varpi _{\mathrm{LH}}^{\lambda  - 1}\left( \gamma \right), \;\; n-1<\tau \notin \mathbb{N}^+.\]
However, the asymptotic analysis for $ \varpi_2 $ becomes different when $ n-1<\tau \in \mathbb{N}^+  $. Note that $ \{\tau\} \in (0,1) $ and $ \left[ {\tau  + 1} \right] - \tau  = \left[ \tau  \right] + 1 - \tau  = 1 - \left\{ \tau  \right\} $ at present. Hence, by  applying \eqref{erheyi1} in \cref{erheyi} we get
\begin{align}
\int_0^{{L_2}\left( \gamma  \right)} {\frac{{{\varphi _2}\left( t \right)}}{{{t^{k_2^ *  + 1}}}}dt}  &= {\mathcal{O}^\# }\left( {\int_0^{{L_2}\left( \gamma  \right)} {\frac{1}{{{t^{\left[ {\tau  + 1} \right] - \tau }}{{\left( { - \ln t} \right)}^\lambda }}}dt} } \right) = {\mathcal{O}^\# }\left( {\int_0^{{L_2}\left( \gamma  \right)} {\frac{1}{{{t^{1 - \left\{ \tau  \right\}}}{{\left( { - \ln t} \right)}^\lambda }}}dt} } \right)\notag \\
\label{coro42}& = {\mathcal{O}^\# }\left( {\int_{1/{L_2}\left( \gamma  \right)}^{ + \infty } {\frac{1}{{{z^{1 + \left\{ \tau  \right\}}}{{\left( {\ln z} \right)}^\lambda }}}dz} } \right) = {\mathcal{O}^\# }\left( {\frac{{{{\left( {{L_2}\left( \gamma  \right)} \right)}^{\left\{ \tau  \right\}}}}}{{{{\left( {\ln \left( {1/{L_2}\left( \gamma  \right)} \right)} \right)}^\lambda }}}} \right),
\end{align}
and similarly according to \eqref{erheyi2} in \cref{erheyi} we have
%\begin{align*}
%\gamma \int_{{L_2}\left( \gamma  \right)}^\varepsilon  {\frac{{{\varphi _2}\left( t \right)}}{{{t^{k_2^ *  + 2}}}}dt}  &= {\mathcal{O}^\# }\left( {\gamma \int_{1/\varepsilon }^{1/{L_2}\left( \gamma  \right)} {\frac{1}{{{z^{\left\{ \tau  \right\}}}{{\left( {\ln z} \right)}^\lambda }}}dz} } \right)\\
%& = {\mathcal{O}^\# }\left( {\frac{\gamma }{{{L_2}\left( \gamma  \right)}} \cdot \frac{{{{\left( {{L_2}\left( \gamma  \right)} \right)}^{\left\{ \tau  \right\}}}}}{{{{\left( {\ln \left( {1/{L_2}\left( \gamma  \right)} \right)} \right)}^\lambda }}}} \right).
%\end{align*}
\begin{equation}\label{coro43}
\gamma \int_{{L_2}\left( \gamma  \right)}^\varepsilon  {\frac{{{\varphi _2}\left( t \right)}}{{{t^{k_2^ *  + 2}}}}dt}  = {\mathcal{O}^\# }\left( {\gamma \int_{1/\varepsilon }^{1/{L_2}\left( \gamma  \right)} {\frac{1}{{{z^{\left\{ \tau  \right\}}}{{\left( {\ln z} \right)}^\lambda }}}dz} } \right) = {\mathcal{O}^\# }\left( {\frac{{\gamma {{\left( {{L_2}\left( \gamma  \right)} \right)}^{\left\{ \tau  \right\} - 1}}}}{{{{\left( {\ln \left( {1/{L_2}\left( \gamma  \right)} \right)} \right)}^\lambda }}}} \right).	
\end{equation}
Now let us choose $ L_2(\gamma) \sim \gamma \to 0^+ $, i.e., different from that when $ n-1<\tau \in \mathbb{N}^+ $, 	one verifies that the second relation in \eqref{varpii} holds for $ i=2 $, and substituting $ L_2(\gamma) $ into \eqref{coro42} or \eqref{coro43} yields that
\[{\varpi _2}\left( \gamma  \right) \sim \frac{{{\gamma ^{\left\{ \tau  \right\}}}}}{{{{\left( { - \ln \gamma } \right)}^\lambda }}} \sim {\gamma ^{\left\{ \tau  \right\}}}\varpi _{\mathrm{LH}}^\lambda \left( \gamma  \right),\;\; n-1<\tau \notin \mathbb{N}^+\]
due to \eqref{varpii} in \cref{theorem1}. This proves \cref{lognew}.

\appendix
\section{Semi separability and weak homogeneity for modulus of continuity}
\begin{lemma}\label{Oxlemma}
Let a modulus continuity $ \varpi $ be given. If $ \varpi $ is piecewise continuously differentiable and $ \varpi'\geq 0 $ is nonincreasing, then $ \varpi $ admits semi separability in \cref{d2}. As a consequence, if $ \varpi $ is convex near $ 0^+ $, then it is semi separable.
\end{lemma}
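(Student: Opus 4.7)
The plan is to show that under the stated hypothesis on $\varpi'$ one actually has $\psi(x)\le x$ for every $x\ge 1$, which immediately gives $\psi(x)=\mathcal{O}(x)$ and hence semi-separability in the sense of Definition 2.2. The argument is a short application of the fundamental theorem of calculus, in which the monotonicity of $\varpi'$ is used twice in complementary directions.

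First I would fix $x\ge 1$ and $0<r<\delta/x$, so that both $\varpi(r)$ and $\varpi(rx)$ are defined (with $rx\le\delta$). Since $\varpi$ is piecewise $C^1$ with $\varpi(0^+)=0$, one has
\[
\varpi(r)=\int_0^r\varpi'(s)\,ds,\qquad \varpi(rx)-\varpi(r)=\int_r^{rx}\varpi'(s)\,ds.
\]
On the interval $[r,rx]$ the hypothesis gives $\varpi'(s)\le\varpi'(r)$, which leads to the upper estimate $\varpi(rx)-\varpi(r)\le r(x-1)\varpi'(r)$. On $[0,r]$ it gives $\varpi'(s)\ge\varpi'(r)$, which leads to the crucial pointwise inequality $r\varpi'(r)\le\varpi(r)$. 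Substituting the latter into the former yields
\[
\varpi(rx)\le\varpi(r)+(x-1)\,r\varpi'(r)\le\varpi(r)+(x-1)\varpi(r)=x\varpi(r),
\]
uniformly in $r$. Taking the supremum over the admissible range of $r$ produces $\psi(x)\le x$ for all $x\ge 1$, which is the required $\mathcal{O}(x)$ behaviour.

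For the ``as a consequence'' statement, I would note that a modulus which is convex near $0^+$ is automatically piecewise continuously differentiable on a right neighbourhood of $0$ (by general regularity of convex functions), with one-sided derivative monotone in the sense compatible with the hypothesis of the first part; combined with the structural requirement $\overline{\lim}_{x\to 0^+}x/\varpi(x)<+\infty$ from Definition 2.1, this places $\varpi$ in the scope of the preceding argument and semi-separability follows. The only minor technical care I expect is the handling of the finitely many jumps of $\varpi'$: at such points the two inequalities above remain valid upon replacing $\varpi'(r)$ by the appropriate one-sided derivative, and this does not affect the passage to the supremum. I foresee no substantive obstacle; the essential content of the lemma is the single inequality $r\varpi'(r)\le\varpi(r)$, which is in turn a direct consequence of the monotonicity of $\varpi'$ and the normalization $\varpi(0^+)=0$.
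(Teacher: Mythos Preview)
Your proof is correct and follows essentially the same approach as the paper: both arguments use the fundamental theorem of calculus together with the monotonicity of $\varpi'$ to bound $\varpi(rx)/\varpi(r)$. The paper decomposes $[0,rx]$ into $[x]+1$ blocks of length $r$ and bounds each integral $\int_{jr}^{(j+1)r}\varpi'\le\int_0^r\varpi'=\varpi(r)$ to obtain $\psi(x)\le[x]+1$, whereas you isolate the single inequality $r\varpi'(r)\le\varpi(r)$ and combine it with $\int_r^{rx}\varpi'\le r(x-1)\varpi'(r)$ to get the slightly sharper $\psi(x)\le x$; the underlying idea is the same.
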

\begin{proof}
Assume that $ \varpi $ is continuously differentiable without loss of generality. Then we obtain semi separability due to
\begin{align*}
\mathop {\sup }\limits_{0 < r < \delta /x} \frac{{\varpi \left( {rx} \right)}}{{\varpi \left( r \right)}} &= \mathop {\sup }\limits_{0 < r < \delta /x} \frac{{\varpi \left( {rx} \right) - \varpi \left( 0+ \right)}}{{\varpi \left( r \right)}} \leq \mathop {\sup }\limits_{0 < r < \delta /x} \frac{1}{{\varpi \left( r \right)}}\sum\limits_{j = 0}^{\left[ x \right]} {\int_{jr}^{\left( {j + 1} \right)r} {\varpi '\left( t \right)dt} } \\
& \leq \mathop {\sup }\limits_{0 < r < \delta /x} \frac{1}{{\varpi \left( r \right)}}\sum\limits_{j = 0}^{\left[ x \right]} {\int_0^r {\varpi '\left( t \right)dt} }  = \left( {\left[ x \right] + 1} \right) = \mathcal{O}\left( x \right),\;\;x \to  + \infty .
\end{align*}
\end{proof}
\begin{lemma}\label{ruotux}
Let a modulus continuity $ \varpi $ be given. If $ \varpi $ is convex near $ 0^+ $, then it admits weak homogeneity in \cref{weak}.
\end{lemma}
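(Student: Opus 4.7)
The plan is to sandwich $\varpi$ between two linear functions on a right neighborhood of $0$, using convexity together with the built-in lower bound $\limsup_{x\to 0^+} x/\varpi(x)<+\infty$ from \cref{d1}. Once both $\varpi(x)\asymp x$ and $\varpi(ax)\asymp ax$, the quotient $\varpi(x)/\varpi(ax)$ is automatically bounded.

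First, since $\varpi$ is convex on some $(0,\delta']$ and $\varpi(0^+)=0$, I extend $\varpi$ continuously by $\varpi(0):=0$, preserving convexity on $[0,\delta']$. A standard convexity-through-the-origin argument, writing $x_1=(x_1/x_2)x_2+(1-x_1/x_2)\cdot 0$ and applying the convex inequality, shows that the slope function $s(x):=\varpi(x)/x$ is nondecreasing on $(0,\delta']$.

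Second, I extract a two-sided linear control on $\varpi$. Monotonicity of $s$ directly gives the upper bound $\varpi(x)\le Kx$ with $K:=s(\delta')=\varpi(\delta')/\delta'$ for $x\in(0,\delta']$. For the lower bound, I rewrite the defining hypothesis $\limsup_{x\to 0^+} x/\varpi(x)<+\infty$ from \cref{d1} as $\liminf_{x\to 0^+} s(x)>0$. Because $s$ is nondecreasing, $\liminf_{x\to 0^+}s(x)=\inf_{(0,\delta']}s(x)=:L>0$, so $\varpi(x)\ge Lx$ on $(0,\delta']$.

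Third, for any fixed $a\in(0,1)$ and any $x\in(0,\delta']$, one has $ax\in(0,\delta']$ as well, so combining the two bounds yields
\[\frac{\varpi(x)}{\varpi(ax)}\le\frac{Kx}{L\,ax}=\frac{K}{La}<+\infty,\]
and taking $\limsup_{x\to 0^+}$ recovers exactly \eqref{erfenzhiyi}. The only subtle point is the second step: the defining condition in \cref{d1} only gives a $\liminf$-type lower bound on $s$, and one needs the monotonicity of the slope function (i.e., convexity) to upgrade it to a genuine pointwise lower bound $L>0$ on an entire interval, so that $\varpi(ax)$ and $\varpi(x)$ can be compared at the same scale. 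Once this upgrade is in place, the conclusion is immediate.
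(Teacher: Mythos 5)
Your argument is internally consistent, but it establishes the lemma only for the \emph{standard} (Western) notion of convexity, and that is not the notion the paper is using. The giveaway is the paper's own one-line proof: it rests on the inequality $\varpi(x)/x \le \varpi(ax)/(ax)$ for $ax<x$, i.e.\ on the chord slope through the origin being \emph{nonincreasing}, which is the defining property of a \emph{concave} function vanishing at $0$ (``convex upward''; compare \cref{Oxlemma}, where ``convex'' is explicitly tied to ``$\varpi'\ge 0$ nonincreasing''). Every modulus actually used in the paper --- $x^{\{\ell\}}$ with $\{\ell\}\in(0,1)$, $1/(-\ln x)^{\lambda}$, and their products --- is concave near $0^+$ and not convex in your sense. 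For such $\varpi$ your very first step fails: $s(x)=\varpi(x)/x$ is nonincreasing, not nondecreasing, so there is no linear upper bound $\varpi(x)\le Kx$ (take $\varpi(x)=\sqrt{x}$). The linear sandwich $Lx\le\varpi(x)\le Kx$ therefore cannot be established for the intended class, and your proof does not cover the cases to which the lemma is applied.

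There is a second, structural symptom that the reading is off: under your hypotheses, convexity together with $\limsup_{x\to0^+}x/\varpi(x)<+\infty$ from \cref{d1} forces $\varpi(x)=\mathcal{O}^{\#}(x)$ near $0^+$, so the only standard-convex moduli admitted by \cref{d1} are Lipschitz-equivalent ones, for which weak homogeneity is immediate and the lemma carries essentially no content. The intended proof is shorter and needs no lower bound on $\varpi(x)/x$ at all: for concave $\varpi$ with $\varpi(0^+)=0$ and fixed $0<a<1$,
\[
\frac{\varpi(x)}{x}\le\frac{\varpi(ax)}{ax}
\quad\Longrightarrow\quad
\varpi(x)\le a^{-1}\varpi(ax),
\]
whence $\limsup_{x\to0^+}\varpi(x)/\varpi(ax)\le a^{-1}<+\infty$, which is sharper than your bound $K/(La)$ and, more importantly, valid for the H\"older and Logarithmic H\"older moduli the theorem is actually applied to. If you insist on the literal reading of ``convex'', your argument is correct as far as it goes, but it proves a statement essentially disjoint from the one the paper needs.
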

\begin{proof}
For $ x>0 $ sufficiently small, one verifies that
\[\varpi \left( x \right) = x \cdot \frac{{\varpi \left( x \right) - \varpi \left( 0+ \right)}}{x-0} \leq x \cdot \frac{{\varpi \left( {ax} \right) - \varpi \left( 0+ \right)}}{{ax}-0} = a^{-1}\varpi \left( {ax} \right),\]
for  $ 0<a<1 $, which leads to weak homogeneity
\[	\mathop {\overline {\lim } }\limits_{x \to {0^ + }} \frac{{\varpi \left( x \right)}}{{\varpi \left( {ax} \right)}} \leq a^{-1} <  + \infty .\]
\end{proof}

\section{Proof of \cref{Theorem1}} \label{JACK}
\begin{proof}
For the completeness of the analysis we give a very detailed proof. An outline of the strategy for the proof is provided: we firstly construct an approximation integral operator by the Fourier transform of  a compactly supported function, and then present certain  properties of the operator (note that these preparations are classical); finally, we estimate the approximation error in the sense of modulus of continuity.	
	
	Let\[K\left( x \right) = \frac{1}{{{{\left( {2\pi } \right)}^n}}}\int_{{\mathbb{R}^n}} {\widehat K\left( \xi \right){e^{\mathrm{i}\left\langle {x,\xi } \right\rangle }}d\xi } ,\;\;x \in {\mathbb{C}^n}\]
	be an entire function whose Fourier transform
	\[\widehat K\left( \xi  \right) = \int_{{\mathbb{R}^n}} {K\left( x \right){e^{ - \mathrm{i}\left\langle {x,\xi } \right\rangle }}dx} ,\;\;\xi  \in {\mathbb{R}^n}\]
	is a smooth function with compact support, contained in the ball $ \left| \xi  \right| \leq 1 $, that satisfies $ \widehat K\left( \xi  \right) = \widehat K\left( { - \xi } \right) $ and
	\begin{equation}\label{3.3}
		{\partial ^\alpha }\widehat K\left( 0 \right) = \left\{ \begin{gathered}
			1,\;\;\alpha  = 0, \hfill \\
			0,\;\;\alpha  \ne 0. \hfill \\
		\end{gathered}  \right.
	\end{equation}
	Next, we assert that
	\begin{equation}\label{5}
		\left| {{\partial ^\beta }\mathcal{F}\big( {\widehat K\left( \xi  \right)} \big)\left( z \right)} \right|  \leq \frac{{c\left( {\beta ,p} \right)}}{{{{\left( {1 + \left| {\operatorname{Re} z} \right|} \right)}^p}}}{e^{\left| {\operatorname{Im} z} \right|}},\;\;\max \left\{ {1,\left|\beta\right|} \right\} \leq p \in \mathbb{R}.
	\end{equation}
	
	Note that we assume $ \widehat K \in C_0^\infty \left( {{\mathbb{R}^n}} \right) $ and $ \operatorname{supp} \widehat K \subseteq B\left( {0,1} \right) $, thus
	\begin{equation}\label{FK}
		\left| {{{\left( {1 + \left| z \right|} \right)}^k}{\partial ^\beta }\mathcal{F}\left( {\widehat K\left( \xi  \right)} \right)\left( z \right)} \right| \leq \sum\limits_{\left| \gamma  \right| \leq k} {\left| {{z^\gamma }{\partial ^\beta }\mathcal{F}\big( {\widehat K\left( \xi  \right)} \big)\left( z \right)} \right|}  = \sum\limits_{\left| \gamma  \right| \leq k} {\left| {{\partial ^{\beta  + \gamma }}\mathcal{F}\big( {\widehat K\left( \xi  \right)} \big)\left( z \right)} \right|},
	\end{equation}
	where $ \mathcal{F} $ represents the Fourier transform. Since $ {\partial ^{\beta  + \gamma }}\mathcal{F}\big( {\widehat K\left( \xi  \right)} \big)\left( z \right) \in C_0^\infty ( {\overline {B \left( {0,1} \right)}} ) $ does not change the condition, we only need to prove that
	\[\left| {\mathcal{F}\big( {\widehat K\left( \xi  \right)} \big)\left( z \right)} \right| \leq {c_k}{e^{\left| {\operatorname{Im} z} \right|}}.\]
	Obviously
	\[\left| {\mathcal{F}\big( {\widehat K\left( \xi  \right)} \big)\left( z \right)} \right| \leq \frac{1}{{{{\left( {2\pi } \right)}^n}}}\int_{{\mathbb{R}^n}} {\big| {\widehat K\left( \xi  \right)} \big|{e^{ - \left\langle {\operatorname{Im} z,\xi } \right\rangle }}d\xi }  \leq \frac{c}{{{{\left( {2\pi } \right)}^n}}}\int_{B\left( {0,1} \right)} {{e^{\left| {\left\langle {\operatorname{Im} z,\xi } \right\rangle } \right|}}d\xi }  \leq c{e^{\left| {\operatorname{Im} z} \right|}},\]
	where $ c>0 $ is independent of $ n $. Then assertion \eqref{5} is proved by recalling  \eqref{FK}.
	
	%	Recall \eqref{FK}, then we obtain \eqref{5}.
	The inequality in \eqref{5} is usually called the Paley-Wiener Theorem, see also Chapter III in \cite{Stein}.	As we will see later, it plays an important role in the subsequent verification of definitions, integration by parts and the translational feasibility according to Cauchy's integral formula.
	
	Next we assert that $ K:{\mathbb{C}^n} \to \mathbb{R} $ is a real analytic function with the following property
	\begin{equation}\label{3.6}
		\int_{{\mathbb{R}^n}} {{{\left( {u + \mathrm{i}v} \right)}^\alpha }{\partial ^\beta }K\left( {u + \mathrm{i}v} \right)du}  = \left\{ \begin{aligned}
			&{\left( { - 1} \right)^{\left| \alpha  \right|}}\alpha !,&\alpha  = \beta , \hfill \\
			&0,&\alpha  \ne \beta , \hfill \\
		\end{aligned}  \right.
	\end{equation}
	for $ u,v \in {\mathbb{R}^n} $ and multi-indices $ \alpha ,\beta  \in {\mathbb{N}^n} $. In order to prove assertion  \eqref{3.6}, we first consider proving the following for $ x\in \mathbb{R}^n $:
	\begin{equation}\label{3.7}
		\int_{{\mathbb{R}^n}} {{x^\alpha }{\partial ^\beta }K\left( x \right)dx}  = \left\{ \begin{aligned}
			&{\left( { - 1} \right)^{\left| \alpha  \right|}}\alpha !,&\alpha  = \beta  \hfill, \\
			&0,&\alpha  \ne \beta  \hfill. \\
		\end{aligned}  \right.
	\end{equation}
	{\bf{Case1:}} If $ \alpha  = \beta  $, then
	\begin{align*}
		\int_{{\mathbb{R}^n}} {{x^\alpha }{\partial ^\beta }K\left( x \right)dx}  &= \int_{{\mathbb{R}^n}} {\Big( {\prod\limits_{j = 1}^n {x_j^{{\alpha _j}}} } \Big)\Big( {\prod\limits_{j = 1}^n {\partial _{{x_j}}^{{\alpha _j}}} } \Big)K\left( x \right)dx} \\
		&= {\left( { - 1} \right)^{{\alpha _1}}}{\alpha _1}!\int_{{\mathbb{R}^{n - 1}}} {\Big( {\prod\limits_{j = 2}^n {x_j^{{\alpha _j}}} } \Big)\Big( {\prod\limits_{j = 2}^n {\partial _{{x_j}}^{{\alpha _j}}} } \Big)K\left( x_2,\cdots,x_n \right)dx_2 \cdots dx_n} \\
		&=  \cdots  = {\left( { - 1} \right)^{{\alpha _1} +  \cdots  + {\alpha _n}}}{\alpha _1}! \cdots {\alpha _n}!\int_\mathbb{R} {K\left( x_n \right)dx_n} \\
		& = {\left( { - 1} \right)^{\left| \alpha  \right|}}\alpha !\widehat K\left( 0 \right) = {\left( { - 1} \right)^{\left| \alpha  \right|}}\alpha !.
	\end{align*}
	{\bf{Case2:}} There exists some $ {\alpha _j} \leq {\beta _j} - 1 $, let $ j=1 $ without loss of generality. Then
	\begin{align*}
		\int_{{\mathbb{R}^n}} {{x^\alpha }{\partial ^\beta }K\left( x \right)dx}  &= \int_{{\mathbb{R}^n}} {\Big( {\prod\limits_{j = 1}^n {x_j^{{\alpha _j}}} } \Big)\Big( {\prod\limits_{j = 1}^n {\partial _{{x_j}}^{{\beta _j}}} } \Big)K\left( x \right)dx} \\
		&= {\left( { - 1} \right)^{{\beta _1} - {\alpha _1}}}\int_{{\mathbb{R}^n}} {\Big( {\prod\limits_{j = 2}^n {x_j^{{\alpha _j}}} } \Big)\Big( {\prod\limits_{j = 2}^n {\partial _{{x_j}}^{{\beta _j}}} } \Big)\partial _{{x_1}}^{{\beta _1} - {\alpha _1}}K\left( x \right)dx}=0.
	\end{align*}
	{\bf{Case3:}} Now we have $ {\alpha _1} \geq {\beta _1} $, and some $ {\alpha _j} \geq {\beta _j} + 1 $ (otherwise $ \alpha  = \beta  $). Let $ j=1 $ without loss of generality. At this time we first prove a conclusion according to \eqref{3.3}. Since
	\[{\partial ^\alpha }\widehat K\left( 0 \right) = {\left( { - \mathrm{i}} \right)^{\left| \alpha  \right|}}\int_{{\mathbb{R}^n}} {{x^\alpha }K\left( x \right)dx}  = 0,\;\;\alpha  \ne 0,\]
	then it follows that
	\begin{displaymath}
		\int_{{\mathbb{R}^n}} {{x^\alpha }K\left( x \right)dx}  = 0,\;\;\alpha  \ne 0.
	\end{displaymath}
	Hence, we arrive at
	\[\int_{{\mathbb{R}^n}} {{x^\alpha }{\partial ^\beta }K\left( x \right)dx}  = {\left( { - 1} \right)^{\sum\limits_{j = 1}^n {\left( {{\beta _j} - {\alpha _j}} \right)} }}\int_{{\mathbb{R}^n}} {\left( {x_1^{{\alpha _1} - {\beta _1}}} \right)\Big( {\prod\limits_{j = 2}^n {x_j^{{\alpha _j} - {\beta _j}}} } \Big)K\left( x \right)dx}  = 0.\]
	This proves \eqref{3.7}. Next, we will consider a complex translation of \eqref{3.7} and prove that
	\begin{equation}\notag
		\int_{{\mathbb{R}^n}} {{{\left( {u + \mathrm{i}v} \right)}^\alpha }{\partial ^\beta }K\left( {u + \mathrm{i}v} \right)du}  = \left\{ \begin{aligned}
			&{\left( { - 1} \right)^{\left| \alpha  \right|}}\alpha !,&\alpha  = \beta,  \hfill \\
			&0,&\alpha  \ne \beta . \hfill \\
		\end{aligned}  \right.
	\end{equation}
	Actually one only needs to pay attention to \eqref{5}, and the proof is completed according to Cauchy's integral formula.
	
	Finally, we only prove that
	\begin{equation}\label{3.10}
		{S_r}{p} = {p},\;\;{p}:{\mathbb{R}^n} \to \mathbb{R}.
	\end{equation}
	In fact, only real polynomials need to be considered
	\[{p} = {x^\alpha } = \prod\limits_{j = 1}^n {x_j^{{\alpha _j}}} \ .\]
	It can be obtained by straight calculation
	\begin{align*}
		{S_r}{p} &= {r^{ - n}}\int_{{\mathbb{R}^n}} {K\left( {\frac{{x - y}}{r}} \right)\prod\limits_{j = 1}^n {y_j^{{\alpha _j}}} dy}  = \int_{{\mathbb{R}^n}} {K\left( z \right)\prod\limits_{j = 1}^n {{{\left( {r{z_j} + {x_j}} \right)}^{{\alpha _j}}}} dz} \\
		& = \Big( {\prod\limits_{j = 1}^n {x_j^{{\alpha _j}}} } \Big)\int_{{\mathbb{R}^n}} {K\left( z \right)dz}  + \sum\limits_\gamma  {{\varphi _\gamma }\left( {r,x} \right)\int_{{\mathbb{R}^n}} {{z^\gamma }K\left( z \right)dz} }  = \prod\limits_{j = 1}^n {x_j^{{\alpha _j}}}  = {p},
	\end{align*}
	where $ {{\varphi _\gamma }\left( {r,x} \right)} $ are coefficients independent of  $ z $.
	
	As to the complex case one only needs to perform complex translation to obtain
	\begin{equation}\notag
		{p}\left( {u;\mathrm{i}v} \right) = {S_r}{p}\left( {u + \mathrm{i}v} \right) = \int_{{\mathbb{R}^n}} {K\left( {\mathrm{i}{r^{ - 1}}v - \eta } \right){p}\left( {u;r\eta } \right)d\eta }.
	\end{equation}

	The above preparations are classic, see also \cite{salamon}. Next we begin to prove the Jackson type approximation theorem via only modulus of continuity. 	We will make use of \eqref{3.10} in case of the Taylor polynomial
	\[{p_k}\left( {x;y} \right): = {P_{f,k}}\left( {x;y} \right) = \sum\limits_{\left| \alpha  \right| \leq k} {\frac{1}{{\alpha !}}{\partial ^\alpha }f\left( x \right){y^\alpha }} \]
	of $ f $ with $ k\in \mathbb{N} $. Note that
	\[\left| {f\left( {x + y} \right) - {p_k}\left( {x;y} \right)} \right| = \Bigg| {\int_0^1 {k{{\left( {1 - t} \right)}^{k - 1}}\sum\limits_{\left| \alpha  \right| = k} {\frac{1}{{\alpha !}}\left( {{\partial ^\alpha }f\left( {x + ty} \right) - {\partial ^\alpha }f\left( x \right)} \right){y^\alpha }dt} } } \Bigg|\]
	for every $ x,y \in {\mathbb{R}^n} $.
	
	Define the following domains to partition $ \mathbb{R}^n $:
	\[{\Omega _1}: = \left\{ {\eta  \in {\mathbb{R}^n}:\left| \eta  \right| < \delta{r^{ - 1}}} \right\},\;\;{\Omega _2}: = \left\{ {\eta  \in {\mathbb{R}^n}:\left| \eta  \right| \geq \delta{r^{ - 1}}} \right\}.\]
	We have to use different estimates in the above two domains, which are abstracted as follows.		If $ 0 < \left| y \right| < \delta $,  we obtain that
	\begin{align}
		\left| {f\left( {x + y} \right) - {p_k}\left( {x;y} \right)} \right| \leq{}& \int_0^1 {k{{\left( {1 - t} \right)}^{k - 1}}\sum\limits_{\left| \alpha  \right| = k} {\frac{1}{{\alpha !}} \cdot {{\left[ {{\partial ^\alpha }f} \right]}_\varpi }\varpi \left( {t\left| y \right|} \right) \cdot \left| {{y^\alpha }} \right|dt} } \notag \\
		\leq{}& c\left( {n,k} \right){\left\| f \right\|_\varpi }\int_0^1 {\varpi \left( {t\left| y \right|} \right)dt}  \cdot \left| {{y^\alpha }} \right|\notag \\
		\leq{}& c\left( {n,k} \right){\left\| f \right\|_\varpi }\varpi \left( {\left| y \right|} \right)\left| {{y^\alpha }} \right|\notag \\
		\label{e1}\leq{}& c\left( {n,k} \right){\left\| f \right\|_\varpi }\varpi \left( {\left| y \right|} \right){\left| y \right|^k}.
	\end{align}
	If $ \left| y \right| \geq \delta $, one easily arrives at
	\begin{align}
		\left| {f\left( {x + y} \right) - {p_k}\left( {x;y} \right)} \right| \leq{}& \int_0^1 {k{{\left( {1 - t} \right)}^{k - 1}}\sum\limits_{\left| \alpha  \right| = k} {\frac{1}{{\alpha !}} \cdot 2{{\left| {{\partial ^\alpha }f} \right|}_{{C^0}}} \cdot \left| {{y^\alpha }} \right|dt} } \notag \\
		\leq{}& c\left( {n,k} \right){\left\| f \right\|_\varpi }\left| {{y^\alpha }} \right|\notag \\
		\label{e2}	\leq{}& c\left( {n,k} \right){\left\| f \right\|_\varpi }{\left| y \right|^k}.
	\end{align}
	The H\"{o}lder inequality has been used in \eqref{e1} and \eqref{e2} with $ k \geq 1,{\alpha _i} \geq 1, \mu_i=k/\alpha_i\geq 1 $ without loss of generality:
	\[\left| {{y^\alpha }} \right| = \prod\limits_{i = 1}^n {{{\left| {{y_i}} \right|}^{{\alpha _i}}}}  \leq \sum\limits_{i = 1}^n {\frac{1}{{{\mu _i}}}{{\left| {{y_i}} \right|}^{{\alpha _i}{\mu _i}}}}  \leq \sum\limits_{i = 1}^n {{{\left| {{y_i}} \right|}^k}}  \leq \sum\limits_{i = 1}^n {{{\left| y \right|}^k}}  = n{\left| y \right|^k}.\]
	
	Now let $ x=u+\mathrm{i}v $ with $ u,v \in {\mathbb{R}^n} $ and $ \left| v \right| \leq r $. Fix $ p = n + k + 2 $, and let $ c = c\left( {n,k} \right) > 0 $ be a universal constant, then it follows that
	\begin{align*}
		\left| {{S_r}f\left( {u + \mathrm{i}v} \right) - {p_k}\left( {u;\mathrm{i}v} \right)} \right| \leq{}& \int_{{\mathbb{R}^n}} {K\left( {\mathrm{i}{r^{ - 1}}v - \eta } \right)\left| {f\left( {u + r\eta } \right) - {p_k}\left( {u;r\eta } \right)} \right|d\eta } \notag \\
		\leq{}& c\int_{{\mathbb{R}^n}} {\frac{{{e^{{r^{ - 1}}v}}}}{{{{\left( {1 + \left| \eta  \right|} \right)}^p}}}\left| {f\left( {u + r\eta } \right) - {p_k}\left( {u;r\eta } \right)} \right|d\eta } \notag \\
		\leq{}& c\int_{{\mathbb{R}^n}} {\frac{1}{{{{\left( {1 + \left| \eta  \right|} \right)}^p}}}\left| {f\left( {u + r\eta } \right) - {p_k}\left( {u;r\eta } \right)} \right|d\eta } \notag \\
		={}& c\int_{{\Omega _1}}  +  \int_{{\Omega _2}} {\frac{1}{{{{\left( {1 + \left| \eta  \right|} \right)}^p}}}\left| {f\left( {u + r\eta } \right) - {p_k}\left( {u;r\eta } \right)} \right|d\eta } \notag \\
		: ={}& c\left( {{I_1} + {I_2}} \right).
	\end{align*}
	As it can be seen later, $ I_1 $ is the main part while $ I_2 $ is the remainder.
	
	Recall \cref{Remarksemi} and \eqref{Ox}. Hence the following holds due to \eqref{e1}
	\begin{align}\label{I1}
		{I_1} ={}& \int_{{\Omega _1}} {\frac{1}{{{{\left( {1 + \left| \eta  \right|} \right)}^p}}}\left| {f\left( {u + r\eta } \right) - {p_k}\left( {u;r\eta } \right)} \right|d\eta }\notag \\
		\leq{}& \int_{\left| \eta  \right| < \delta{r^{ - 1}}} {\frac{1}{{{{\left( {1 + \left| \eta  \right|} \right)}^p}}} \cdot c{{\left\| f \right\|}_\varpi }\varpi \left( {\left| {r\eta } \right|} \right){{\left| {r\eta } \right|}^k}d\eta } \notag \\
		\leq{}& \int_{\left| \eta  \right| < \delta{r^{ - 1}}} {\frac{1}{{{{\left( {1 + \left| \eta  \right|} \right)}^p}}} \cdot c{{\left\| f \right\|}_\varpi }\varpi \left( { {r } } \right) \psi(|\eta|) {{\left| {r\eta } \right|}^k}d\eta } \notag \\
		\leq{}& c{\left\| f \right\|_\varpi }{r^k}{\varpi(r)}\int_0^{\delta{r^{ - 1}}} {\frac{{{w^{k + n }}}}{{{{\left( {1 + w} \right)}^p}}}dw} \notag \\
		\leq{}& c{\left\| f \right\|_\varpi }{r^k}{\varpi(r)}\int_0^{+\infty} {\frac{{{w^{k + n }}}}{{{{\left( {1 + w} \right)}^p}}}dw} \notag \\
		\leq{}& c{\left\| f \right\|_\varpi }{r^k}{\varpi(r)}.
	\end{align}
	In view of \eqref{e2}, we have
	\begin{align}\label{I2}
		{I_2} ={}& \int_{{\Omega _2}} {\frac{1}{{{{\left( {1 + \left| \eta  \right|} \right)}^p}}}\left| {f\left( {u + r\eta } \right) - {p_k}\left( {u;r\eta } \right)} \right|d\eta } \notag \\
		\leq{}& \int_{\left| \eta  \right| \geq \delta{r^{ - 1}}} {\frac{1}{{{{\left( {1 + \left| \eta  \right|} \right)}^p}}} \cdot c{{\left\| f \right\|}_\varpi }{{\left| {r\eta } \right|}^k}d\eta } \notag \\
		\leq{}& c{\left\| f \right\|_\varpi }{r^k}\int_{\delta{r^{ - 1}}}^{ + \infty } {\frac{{{w^{k + n - 1}}}}{{{{\left( {1 + w} \right)}^p}}}dw} \notag \\
		\leq{}& c{\left\| f \right\|_\varpi }{r^k}\int_{\delta{r^{ - 1}}}^{ + \infty } {\frac{1}{{{w^{p - k - n + 1}}}}dw} \notag \\
		\leq{}& c{\left\| f \right\|_\varpi }{r^{k+2}}.
	\end{align}
	By \eqref{I1} and \eqref{I2}, we finally arrive at
	\begin{equation}\notag
		\left| {{S_r}f\left( {u + \mathrm{i}v} \right) - {p_k}\left( {u;\mathrm{i}v} \right)} \right| \leq c{\left\| f \right\|_\varpi }{r^k} {\varpi(r)}
	\end{equation}
	due to $ \mathop {\overline {\lim } }\limits_{r \to {0^ + }} r/{\varpi }\left( r \right) <  + \infty  $ in \cref{d1}.	This proves \cref{Theorem1} for $ |\alpha| = 0 $. As to $ |\alpha| \ne 0 $, the result follows from the fact that $ {S_r} $ commutes with $ {\partial ^\alpha } $. We therefore finish the proof of \cref{Theorem1}.
\end{proof}

\section{Proof of \cref{coro1}}\label{proofcoro1}
\begin{proof}
	Only the analysis of case $ \left| \alpha  \right| = 0 $ is given. In view of  \cref{Theorem1} and \eqref{e1}, we obtain that
	\begin{align}
			\left| {{S_r}f\left( x \right) - f\left( x \right)} \right| \leq{}& \left| {{S_r}f\left( x \right) - {P_{f,k}}\left( {\operatorname{Re} x;\mathrm{i}\operatorname{Im} x} \right)} \right| + \left| {{P_{f,k}}\left( {\operatorname{Re} x;\mathrm{i}\operatorname{Im} x} \right) - f\left( x \right)} \right|\notag \\
		\label{Srf-f}	\leq{}& c_*{\left\| f \right\|_\varpi }{r^k}\varpi(r) ,
		\end{align}
	where the constant $ c_*>0 $ depends on $ n $ and $ k $.
	Further, by \eqref{Srf-f} we have
	\begin{align*}
		\left| {{S_r}f\left( x \right)} \right| \leq{}& \left| {{S_r}f\left( x \right) - f\left( x \right)} \right| + \left| {f\left( x \right)} \right|\notag \\
			\leq{}& c_*{\left\| f \right\|_\varpi }{r^k}\varpi(r) + {\left\| f \right\|_\varpi } \leq {c^ * }{\left\| f \right\|_\varpi },
		\end{align*}
provided a constant $ c^*>0 $ depending on $ n,k $ and $ \varpi $.	This completes the proof.
\end{proof}

\section{Proof of \cref{coro2}}\label{proofcoco2}
\begin{proof}
	It is easy to verify that
	\begin{align*}
			{S_r}f\left( {x + 1} \right) ={}& \frac{1}{{{r^n}}}\int_{{\mathbb{R}^n}} {K\left( {\frac{{x - \left( {y - 1} \right)}}{r}} \right)f\left( y \right)dy} = \frac{1}{{{r^n}}}\int_{{\mathbb{R}^n}} {K\left( {\frac{{x - u}}{r}} \right)f\left( {u + 1} \right)du} \notag \\
			={}& \frac{1}{{{r^n}}}\int_{{\mathbb{R}^n}} {K\left( {\frac{{x - u}}{r}} \right)f\left( u \right)du}  = {S_r}f\left( x \right).
		\end{align*}
	According to Fubini's theorem, we obtain
	\begin{align*}
			\int_{{\mathbb{T}^n}} {{S_r}f\left( x \right)dx}  ={}& \frac{1}{{{r^n}}}\int_{{\mathbb{R}^n}} {\int_{{\mathbb{T}^n}} {K\left( {\frac{{x - y}}{r}} \right)f\left( y \right)dy} }  \notag \\
			={}& \frac{1}{{{r^n}}}\int_{{\mathbb{R}^n}} {K\left( {\frac{m}{r}} \right)\left( {\int_{{\mathbb{T}^n}} {f\left( {x + m} \right)dx} } \right) dm}  = 0.
		\end{align*}
	
	This completes the proof.
\end{proof}

\section{Asymptotic analysis in estimates}
Here we provide some useful asymptotic results, all of which can be proved by L'Hopital's rule or by integration by parts,  thus the proof is omitted here.
\begin{lemma}\label{duochongduishu}
Let $ \varrho \in \mathbb{N}^+ $, $ \lambda>1 $ and some $ M>0 $ sufficiently large be fixed. Then for $ X\to +\infty $, there holds
\[\int_M^X {\frac{1}{{(\ln z) \cdots {{(\underbrace {\ln  \cdots \ln }_\varrho z)}^\lambda }}}dz}  = {\mathcal{O}^\# }\Bigg( {\frac{X}{{(\ln X) \cdots {{(\underbrace {\ln  \cdots \ln }_\varrho X)}^\lambda }}}} \Bigg).\]
\end{lemma}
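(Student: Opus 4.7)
The plan is to apply L'H\^opital's rule to the ratio
\[
\frac{F(X)}{G(X)}, \qquad F(X) := \int_M^X \frac{dz}{H(z)}, \qquad G(X) := \frac{X}{H(X)},
\]
where $H(z) := (\ln z)(\ln\ln z)\cdots (\underbrace{\ln\cdots\ln}_{\varrho} z)^\lambda$, and show that the limit equals $1$. Writing $L_j(z) := \underbrace{\ln\cdots\ln}_{j} z$ for brevity, convergence of $F/G$ to a finite positive constant immediately yields the two-sided multiplicative bounds required by the $\mathcal{O}^\#$-notation of the lemma.

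First I would verify that both $F(X), G(X) \to +\infty$ as $X \to +\infty$ so that L'H\^opital applies. For $G$ this is obvious, since $X$ grows faster than any fixed finite product of iterated logarithms. For $F$ one uses monotonicity of $H$ to get $F(X) \geq (X-M)/H(X) \to +\infty$. Next I would compute $F'(X) = 1/H(X)$ by the Fundamental Theorem of Calculus, and apply logarithmic differentiation to $H$, using the identity $L_j'(z) = 1/(z\, L_1(z)\cdots L_{j-1}(z))$, to obtain
\[
\frac{X H'(X)}{H(X)} = \sum_{j=1}^{\varrho-1} \frac{1}{L_1(X)\cdots L_j(X)} + \frac{\lambda}{L_1(X)\cdots L_\varrho(X)} \xrightarrow[X\to+\infty]{} 0,
\]
because every $L_j(X)\to+\infty$. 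Consequently
\[
G'(X) = \frac{1}{H(X)}\Bigl(1 - \frac{X H'(X)}{H(X)}\Bigr) = \bigl(1 + o(1)\bigr)\, F'(X).
\]

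L'H\^opital's rule then gives
\[
\lim_{X\to+\infty} \frac{F(X)}{G(X)} = \lim_{X\to+\infty} \frac{F'(X)}{G'(X)} = 1,
\]
which establishes the claimed equivalence. The only piece of work is the logarithmic-differentiation bookkeeping for $H'/H$ in the iterated-log case; once that identity is in hand, the argument is mechanical. As an equivalent alternative, one may integrate by parts with $u=1/H(z)$, $dv=dz$, obtaining $F(X) = G(X) - M/H(M) + \int_M^X z H'(z)/H(z)^2\,dz$, whose remainder is $o(F(X))$ via the same estimate $zH'(z)/H(z) = o(1)$, leading to the same conclusion.
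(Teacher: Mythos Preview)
Your proof is correct and follows exactly the approach the paper indicates: the paper itself omits the proof, stating only that it ``can be proved by L'H\^opital's rule or by integration by parts,'' and you carry out precisely the L'H\^opital computation (and sketch the integration-by-parts alternative as well). The bookkeeping on $XH'(X)/H(X)\to 0$ via $L_j'(z)=1/(z\,L_1(z)\cdots L_{j-1}(z))$ is accurate, including the degenerate case $\varrho=1$ where the first sum is empty.
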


\begin{lemma}\label{erheyi}
	Let $ 0 <\sigma<1 $, $ \lambda>1 $ and some $ M>0 $ sufficiently large be fixed. Then for $ X\to +\infty $, we have
\begin{equation}\label{erheyi1}
	\int_M^X {\frac{1}{{{z^\sigma }{{\left( {\ln z} \right)}^\lambda }}}dz}  = {\mathcal{O}^\# }\left( {\frac{{{X^{1 - \sigma }}}}{{{{\left( {\ln X} \right)}^\lambda }}}} \right),
\end{equation}
and
\begin{equation}\label{erheyi2}
	\int_X^{ + \infty } {\frac{1}{{{z^{1 + \sigma }}{{\left( {\ln z} \right)}^\lambda }}}dz}  = {\mathcal{O}^\# }\left( {\frac{1}{{{X^\sigma }{{\left( {\ln X} \right)}^\lambda }}}} \right).
\end{equation}
\end{lemma}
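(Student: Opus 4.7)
The plan is to prove both asymptotic equivalences by L'H\^opital's rule, since in each case the two sides form an indeterminate form---$\infty/\infty$ for \eqref{erheyi1} (the integrand is non-integrable at infinity because $\sigma<1$ dominates the logarithmic factor), and $0/0$ for \eqref{erheyi2} (the integrand is integrable at infinity because $\sigma>0$). Since the symbol $\mathcal{O}^{\#}$ requires only that the ratio of the two sides be bounded above and below by strictly positive constants, it suffices to verify that this ratio tends to a positive finite limit.

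For \eqref{erheyi1}, I would set $F(X):=\int_{M}^{X} z^{-\sigma}(\ln z)^{-\lambda}\,dz$ and $G(X):=X^{1-\sigma}(\ln X)^{-\lambda}$, read off $F'(X)=X^{-\sigma}(\ln X)^{-\lambda}$ from the Fundamental Theorem of Calculus, and compute
\[
G'(X)=X^{-\sigma}(\ln X)^{-\lambda}\left((1-\sigma)-\frac{\lambda}{\ln X}\right)
\]
via the product rule. Hence $F'(X)/G'(X)\to 1/(1-\sigma)\in(0,+\infty)$ as $X\to+\infty$, and L'H\^opital's rule yields $F(X)/G(X)\to 1/(1-\sigma)$, which is exactly \eqref{erheyi1}. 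The computation for \eqref{erheyi2} is parallel: with $F(X):=\int_{X}^{+\infty} z^{-1-\sigma}(\ln z)^{-\lambda}\,dz$ and $G(X):=X^{-\sigma}(\ln X)^{-\lambda}$, both tending to $0$, one gets $F'(X)=-X^{-1-\sigma}(\ln X)^{-\lambda}$ and $G'(X)=-X^{-1-\sigma}(\ln X)^{-\lambda}\left(\sigma+\lambda/\ln X\right)$, so $F'/G'\to 1/\sigma\in(0,+\infty)$ and L'H\^opital delivers the conclusion.

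There is no serious obstacle in this argument: the hypotheses $0<\sigma<1$ and $\sigma>0$ are exactly what make the leading coefficients $1-\sigma$ and $\sigma$ appearing in $G'$ strictly positive, so that the correction term $\lambda/\ln X$ coming from differentiating $(\ln X)^{-\lambda}$ is genuinely subleading; this is the only place where a subtle sign or degeneracy issue could arise. The assumption $\lambda>1$ is not actually required for these particular computations (it matters only upstream in \cref{lognew} for the classical Dini-type convergence), and the requirement that $M$ be sufficiently large is merely to ensure $\ln z>0$ on $[M,+\infty)$ so that the integrands are positive and the denominators nowhere vanish. One could alternatively give a single integration-by-parts proof extracting the leading term explicitly with an $o(\cdot)$ remainder, which would even pin down the constants $1/(1-\sigma)$ and $1/\sigma$, but this sharper information is unnecessary for the $\mathcal{O}^{\#}$ statement.
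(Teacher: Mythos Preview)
Your proposal is correct and follows exactly the approach the paper itself indicates: the paper omits the proof of this lemma entirely, stating only that it ``can be proved by L'Hopital's rule or by integration by parts,'' which is precisely what you carry out. Your observations that $\lambda>1$ plays no role in this particular computation and that $M$ large merely guarantees $\ln z>0$ are also accurate.
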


\section{KAM theorem for quantitative estimates}\label{Appsalamon}
Here we give a KAM theorem for quantitative estimates, which is used in  \cref{theorem1} in this paper. See Theorem 1 in Salamon's paper \cite{salamon} for case $ \tau >n-1 $; as to $ \tau =n-1 $, the proof is relatively trivial (in fact, just slightly modify Lemma 2 in \cite{salamon}).
\begin{theorem}\label{appendix}
	Let $ n \geq 2, \tau \geq n - 1,  0 < \theta < 1$, and $ M \geq 1 $ be given. Then there are positive constants $ \delta_* $ and $ c $ such that $ c\delta_*\leq1/2 $ and the following holds for every $ 0 < r^* \leq 1 $ and every $ \omega\in\mathbb{R}^n $ that satisfies \eqref{dio}.
	
	Suppose that $ H(x, y) $ is a real analytic Hamiltonian function defined in the strip
	$ \left| {\operatorname{Im} x} \right| \leq {r^ * },\left| y \right| \leq {r^ * } $, which is of period $ 1 $ in the variables $ {x_1}, \ldots ,{x_n} $ and satisfies
	\begin{align*}
		\left| {H\left( {x,0} \right) - \int_{{\mathbb{T}^n}} {H\left( {\xi ,0} \right)d\xi } } \right| &\leq {\delta ^ * }{r^ * }^{2\tau  + 2},\notag\\
		\left| {{H_y}\left( {x,0} \right) - \omega } \right| &\leq {\delta ^ * }{r^ * }^{\tau  + 1},\notag\\
		\left| {{H_{yy}}\left( {x,y} \right) - Q\left( {x,y} \right)} \right| &\leq \frac{{c{\delta ^ * }}}{{2M}},\notag
	\end{align*}
	for $ \left| {\operatorname{Im} x} \right| \leq r^*,\left| y \right| \leq r^* $, where $ 0 < {\delta ^ * } \leq {\delta _ * } $, and $ Q\left( {x,y} \right) \in {\mathbb{C}^{n \times n}} $ is a symmetric
	(not necessarily analytic) matrix valued function in the strip $ \left| {\operatorname{Im} x} \right| \leq r,\left| y \right| \leq r $
	and satisfies in this domain
	\[\left| {Q\left( z \right)} \right| \leq M,\;\;\left| {{{\left( {\int_{{\mathbb{T}^n}} {Q\left( {x,0} \right)dx} } \right)}^{ - 1}}} \right| \leq M.\]
	Then there exists a real analytic symplectic transformation $ z = \phi \left( \zeta  \right) $ of the
	form
	\[z = \left( {x,y} \right),\;\;\zeta  = \left( {\xi ,\eta } \right),\;\;x = u\left( \xi  \right),\;\;y = v\left( \xi  \right) + u_\xi ^T{\left( \xi  \right)^{ - 1}}\eta \]
	mapping the strip $ \left| {\operatorname{Im} \xi } \right| \leq \theta r^*,\left| \eta  \right| \leq \theta r^* $ into $ \left| {\operatorname{Im} x} \right| \leq r^*,\left| y \right| \leq r^* $, such that $ u\left( \xi  \right) - \xi  $ and $ v\left( \xi  \right) $ are of period $ 1 $ in all variables and the Hamiltonian function $ K: = H \circ \phi  $ satisfies
	\[{K_\xi }\left( {\xi ,0} \right) = 0,\;\;{K_\eta }\left( {\xi ,0} \right) = \omega .\]
	Moreover, $ \phi $ and $ K $ satisfy the estimates
	\begin{align*}
		&\left| {\phi \left( \zeta  \right) - \zeta } \right| \leq c{\delta ^ * }\left( {1 - \theta } \right){r^ * },\;\;\left| {{\phi _\zeta }\left( \zeta  \right) - \mathbb{I}} \right| \leq c{\delta ^ * },\\
		&\left| {{K_{\eta \eta }}\left( \zeta  \right) - Q\left( \zeta  \right)} \right| \leq \frac{{c{\delta ^ * }}}{M},\\
		&\left| {v \circ {u^{ - 1}}\left( x \right)} \right| \leq c{\delta ^ * }{r^ * }^{\tau  + 1},
	\end{align*}
	for $ \left| {\operatorname{Im} \xi } \right| \leq \theta r^*,\left| \eta  \right| \leq \theta r^* $, and $ \left| {\operatorname{Im} x} \right| \leq \theta r^* $.
\end{theorem}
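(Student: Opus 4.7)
The plan is to follow the classical Salamon-type strategy: construct $\phi$ via a generating function $S(x,\eta) = \langle x,\eta\rangle + U(x) + \langle V(x),\eta\rangle$ with periodic unknowns $U:\mathbb{R}^n\to\mathbb{R}$ and $V:\mathbb{R}^n\to\mathbb{R}^n$. The implicit relations $y = S_x$, $\xi = S_\eta$ automatically produce the form $x=u(\xi)$, $y=v(\xi)+u_\xi^{-T}(\xi)\eta$ stated in the theorem, with $u = (\mathrm{id}+V)^{-1}$ and $v = U_x\circ u$. The first task is to translate $K_\xi(\xi,0)=0$ and $K_\eta(\xi,0)=\omega$ into equations for $(U,V)$. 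A direct chain-rule computation, combined with a linearization of $H$ in $y$ about the torus and the symplectic identity $(I+V_x)u_\xi = I$, reduces these conditions (modulo a quadratic remainder in $(U,V)$) to the coupled system
\[
DU(x) \;=\; \bar H - H(x,0),\qquad DV(x) + Q(x,0)\,U_x(x) \;=\; H_y(x,0)-\omega,
\]
where $D = \langle\omega,\partial_x\rangle$ and $\bar H := \int_{\mathbb{T}^n} H(\xi,0)\,d\xi$. The right-hand sides are small of sizes $\delta^*r^{*\,2\tau+2}$ and $\delta^*r^{*\,\tau+1}$ respectively. The mean-value compatibility forces a specific choice of the average of $U_x$, which is where the nondegeneracy bound on $\bigl(\int_{\mathbb{T}^n} Q(x,0)\,dx\bigr)^{-1}$ enters.

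The next step is to solve this system Fourier-mode by Fourier-mode on $\mathbb{T}^n$. Each nonzero $\tilde k$ gives coefficients of the form $\widehat U(\tilde k) = -\widehat h(\tilde k)/(2\pi\mathrm{i}\langle\tilde k,\omega\rangle)$ and similarly for $V$, and the Diophantine assumption $|\langle\tilde k,\omega\rangle|\ge\alpha_*|\tilde k|^{-\tau}$ controls the small denominators. Standard Paley-type analysis in a slightly shrunk analytic strip of width $(1-\sigma)r^*$ then yields bounds of schematic form
\[
|U|_{(1-\sigma)r^*} \;\lesssim\; \alpha_*^{-1}\sigma^{-\tau}\delta^* r^{*\,2\tau+2},\qquad |V|_{(1-\sigma)r^*} \;\lesssim\; \alpha_*^{-1}\sigma^{-\tau}\delta^* r^{*\,\tau+1},
\]
after absorbing the coupling $Q\cdot U_x$ by a contraction in the unknown $U$ (this is where the universal smallness $c\delta_*\le 1/2$ is consumed). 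Setting $\sigma \sim 1-\theta$ and applying Cauchy's inequality produces the corresponding derivative bounds, and ensures that $\mathrm{id}+V$ is a near-identity analytic diffeomorphism of the complex torus, so that the implicit function theorem provides $u=(\mathrm{id}+V)^{-1}$ and hence $\phi$.

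The remaining steps are essentially bookkeeping. First, verify that $\phi$ maps the shrunk strip $|\operatorname{Im}\xi|,|\eta|\le\theta r^*$ into $|\operatorname{Im} x|,|y|\le r^*$ and that the claimed bounds $|\phi(\zeta)-\zeta|\le c\delta^*(1-\theta)r^*$ and $|\phi_\zeta(\zeta)-\mathbb{I}|\le c\delta^*$ follow from the $U,V$ estimates above. Second, expand $K=H\circ\phi$ and check that the two homological equations just solved precisely force $K_\xi(\xi,0)=0$ and $K_\eta(\xi,0)=\omega$; the estimate $|v\circ u^{-1}(x)|\le c\delta^* r^{*\,\tau+1}$ drops out from $v = U_x\circ u$ and the bound on $U$. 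Third, bound $|K_{\eta\eta}-Q|$ by combining the hypothesis $|H_{yy}-Q|\le c\delta^*/(2M)$ with the closeness of $\phi_\zeta$ to the identity via the chain rule. The only point where one cannot simply quote \cite{salamon} verbatim is the small-divisor estimate at the critical Diophantine exponent $\tau=n-1$: here the geometric series $\sum_{0\ne\tilde k\in\mathbb{Z}^n}|\tilde k|^{-(\tau+n)}$ used in the $\tau>n-1$ case just barely fails to converge, so I would perform a finer dyadic decomposition of the Fourier modes $\{|\tilde k|\in[2^j,2^{j+1})\}$ and sum in $j$ at the cost of an innocuous logarithmic factor that is absorbed into a slight adjustment of $\sigma$. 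This is the main (and essentially only) obstacle; the rest is a careful tracking of the exponents of $r^*$ and $\delta^*$.
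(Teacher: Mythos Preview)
The paper does not give its own proof here: it simply invokes Theorem~1 of Salamon for $\tau>n-1$ and remarks that $\tau=n-1$ follows by a minor modification of Salamon's small-divisor lemma (his Lemma~2). Your outline is Salamon's strategy, so at that level you are aligned with what the paper does.

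There is, however, a genuine gap in your sketch. The theorem asserts $K_\xi(\xi,0)=0$ and $K_\eta(\xi,0)=\omega$ \emph{exactly}. You correctly note that linearizing these conditions produces the homological pair $DU=\bar H-H(\cdot,0)$ and $DV+QU_x=\pm(H_y(\cdot,0)-\omega)$, but you then claim that solving this linear system ``precisely forces'' the two normal-form identities. It does not. Writing $x=u(\xi)$ one has $K(\xi,0)=H\bigl(x,U_x(x)\bigr)$ and $K_\eta(\xi,0)=(I+V_x(x))\,H_y\bigl(x,U_x(x)\bigr)$; the first contains the quadratic term $\tfrac12\,U_x^{T}H_{yy}U_x$ and the second contains $V_x\cdot\bigl(H_y(x,U_x)-\omega\bigr)$, so the exact conditions are genuinely nonlinear in $(U,V)$. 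In Salamon's argument these exact equations are solved by a Banach fixed-point iteration whose contraction constant is $O(\delta^*)$, and it is \emph{this} contraction that consumes the smallness $c\delta_*\le\tfrac12$---not ``absorbing the coupling $Q\cdot U_x$'', which already sits in the linearized $V$-equation and is dispatched directly once $U$ is known. If you stop at the linearized system you obtain the normal form only up to an $O\bigl((\delta^*)^2\bigr)$ error, which is a weaker statement than the one you are asked to prove.

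Your treatment of the endpoint $\tau=n-1$ via a dyadic decomposition of Fourier modes is fine and is essentially what the paper's parenthetical remark about modifying Salamon's Lemma~2 amounts to.
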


\section*{Acknowledgments}
%The authors would like to thank the referees and editors for their valuable suggestions and comments which led to a significant improvement of the paper.
This work was supported in part by National Basic Research Program of China (Grant No. 2013CB834100), National Natural Science Foundation of China (Grant No. 12071175, Grant No. 11171132,  Grant No. 11571065), Project of Science and Technology Development of Jilin Province (Grant No. 2017C028-1, Grant No. 20190201302JC), and Natural Science Foundation of Jilin Province (Grant No. 20200201253JC).

\end{document}